\newtheorem{theorem}{Theorem}[section]
\newtheorem{corollary}[theorem]{Corollary}
\newtheorem{definition}[theorem]{Definition}
\newtheorem{conjecture}[theorem]{Conjecture}
\newtheorem{lemma}[theorem]{Lemma}
\newtheorem{proposition}[theorem]{Proposition}
\begin{document}

\title{Maximum size of a triangle-free graph with bounded maximum degree and matching number\thanks{M. Ahajideh and T. Ekim were supported by TÜBİTAK grant number 118F397.
M.A.~Yıldız was supported by a Marie Skłodowska-Curie Action from the EC (COFUND grant no. 945045) and by the NWO Gravitation project NETWORKS (grant no. 024.002.003).\\
 e-mail addresses: ahanjidm@gmail.com (M. Ahanjideh),  
tinaz.ekim@boun.edu.tr(T. Ekim), m.a.yildiz@uva.nl (M. A. Yıldız).}}

\author{
Milad Ahanjideh$^{\dag}$, Tınaz Ekim$^{\dag}$ and Mehmet Akif Yıldız $^{\ddagger}$\\
{\small$^{\dag}$ Department of Industrial Engineering, Bo\u{g}azi\c{c}i University,} \\ {\small 34342, Bebek, Istanbul,Turkey.} \\
{\small $^{\ddagger}$ Korteweg de Vries Instituut voor Wiskunde, Universiteit van Amsterdam,}\\ {\small Postbus 94248
1090 GE Amsterdam, The Netherlands. }}

\maketitle

\vspace{-0.5cm}

\begin{abstract}
Determining the maximum number of edges under degree and matching number constraints have been solved for general graphs in \cite{hanson} and \cite{j}. It follows from the structure of those extremal graphs that deciding whether this maximum number decreases or not when restricted to claw-free graphs, to $C_4$-free graphs or to triangle-free graphs are separately interesting research questions. The first two cases being already settled in \cite{Pinar} and \cite{chordal}, in this paper we focus on triangle-free graphs. We show that unlike most cases for claw-free graphs and $C_4$-free graphs, forbidding triangles from extremal graphs causes a strict decrease in the number of edges and adds to the hardness of the problem. We provide a formula giving the maximum number of edges in a triangle-free graph with degree at most $d$ and matching number at most $m$ for all cases where $d\geq m$, and for the cases where $d<m$ with either $d\leq 6$ or $Z(d)\leq m < 2d$ where $Z(d)$ is a function of $d$ which is roughly $5d/4$. We also provide an integer programming formulation for the remaining cases and as a result of further discussion on this formulation, we conjecture that our formula giving the size of triangle-free extremal graphs is also valid for these open cases.
 \end{abstract}

{{\bf Key words:} Extremal graphs; triangle-free graphs; Erdős-Stone's Theorem; Turan's Theorem; integer programming}

{{\bf 2010 Mathematics Subject Classification:} 05C35, 05C55}

\section{Introduction} 
In extremal graph theory, an important series of problems, including the celebrated Turán's graphs \cite{turan}, investigate the maximization or the minimization of the number of edges in a graph under a given set of constraints. A question of this kind is to determine the maximum number of edges of a graph when its maximum degree is at most $d$ and its matching number is at most $m$ for two given integers $d$ and $m$. This is a special case of a more general problem posed by Erdős and Rado in 1960 \cite{erdos-rado}. It is worth mentioning that this problem is equivalent to determining Ramsey numbers for line graphs \cite{ramseyline}. This question has been first solved in 1974 by Chv\'{a}tal and Hanson \cite{hanson} using some optimization techniques. A proof constructing an ``extremal'' graph with maximum number of edges under given degree and matching number constraints has only came out much later in 2009 by Balachandran and Khare \cite{j}. Balachandran and Khare \cite{j} exhibit an extremal graph whose connected components consist of stars, complete graphs and in some cases ``almost complete graphs'' that contain $C_4$'s (cycles of length 4), but do not inform us on the unicity of these extremal graphs. This gives rise to a natural question: what happens if we restrict the structure of extremal graphs? Can the same upper bound be still achieved? The structure of extremal graphs given in \cite{j} makes this question especially interesting for three classes of graphs obtained by restricting the above-mentioned types of components: claw-free graphs obtained by forbidding the smallest star (which is not an edge), triangle-free graphs obtained by forbidding the smallest complete graph (which is not an edge), and $C_4$-free graphs (since $C_4$'s occur in ``almost complete graphs''). 

Among these directions, the situation of claw-free graphs has been settled by Dibek et al. in \cite{Pinar}. The authors exhibit cases where the maximum number of edges remains the same as for general graphs, and other cases where it is strictly less. More recently, Blair et al. \cite{chordal} investigated chordal graphs which are much more restricted than $C_4$-free graphs, the class of graphs that would exclude the ``almost complete graph'' components occurring in the extremal graphs provided in \cite{j}. The authors showed that replacing the ``almost complete graph'' components by chordal graphs having the same size, the bound for general graphs is also achieved by chordal graphs.  In the same spirit, M{\aa}land addressed the restriction to bipartite graphs, split graphs, disjoint unions of split graphs and unit interval graphs in \cite{thesie}. 

In this paper, we investigate the direction that remained open and consider triangle-free graphs from the same perspective. We start with same preliminaries in Section \ref{sec:prem}. In Section \ref{sec:dgeqm}, we first determine the maximum number of edges of a triangle-free graph when its maximum degree is at most $d$ and its matching number is at most $m$ for two given integers $d$ and $m$ such that $d>m$ or $d=m$. Besides, for $m>d$, we derive some structural properties for the connected components of an edge-extremal graph, which allows us to identify the desired extremal value in further sections. Using these structural properties, in Section \ref{sec:d456}, we solve the problem for  $m>d$ with either $d\leq 6$ or $Z(d)\leq m <2d$ where $Z(d)$ is roughly $5d/4$. For claw-free graphs and chordal graphs, the size of edge-extremal graphs are the same as the general upper bound in most of the cases. Clearly, this guarantees the optimality of the size once a graph with desired properties is constructed. Unlike these cases, the size of edge-extremal triangle-free graphs that we find in this paper is, in most of the time, strictly less than the general case. This adds to the difficulty of proving the optimality in our results. In Section \ref{sec:conc}, we present all our findings as a unique formula providing the size of the extremal graphs (in Theorem \ref{thm:ALL-IN-ONE}) and compare it with the size of general extremal graphs. Last but not least, in Section \ref{sec:discussion}, we investigate the remaining cases, namely for natural numbers $m$ and $d$ such that $7\leq d<m$ with either $m<Z(d)$ or $m\geq 2d$. For these open cases, we suggest an integer programming formulation based on our earlier observations. With further discussion on this formulation, we conjecture that the formula we provide in Theorem \ref{thm:ALL-IN-ONE} is valid in general, with no condition on $d$ and $m$. Lastly, again based on our former structural results, we reformulate our problem as a variant of the extremal problem addressed in Turan's Theorem \cite{turan} with an additional constraint on the maximum degree; or in Erdős-Stone's Theorem which has been described as a fundamental theorem of extremal graph theory (see \cite{bollobas}). Indeed, the problem of finding the maximum number of edges in a $K_r$-free graph with given number of vertices and maximum degree at most $d$ is an interesting problem for itself. 

\section{Notation and Preliminaries}\label{sec:prem}

Throughout  this paper, $G=(V(G),E(G))$ is a simple undirected graph. We call $|V(G)|$ and $|E(G)|$  the {\it order} and the {\it size} of $G$, respectively. For any vertex $v\in V(G)$, the number of vertices adjacent to $v$ is said to be the \textit{degree} of $v$, denoted by $d(v)$. We say a graph $G$ is \textit{$d$-regular} if $d(v)=d$ for all $v\in V(G)$. Moreover, if $d(w)=d-1$ for some $w\in V(G)$ and $d(v)=d$ for all $v\in V(G)-w$, then $G$ is said to be \textit{almost $d$-regular}. We denote the maximum degree of $G$ by $\Delta(G)$, and the minimum degree of $G$ by $\delta(G)$. The minimum number of colors to color all edges of a graph $G$ in such a way that two adjacent edges receive different colors is called the \textit{chromatic index} of $G$, and denoted by $\chi'(G)$. According to Vizing's Theorem, we have $\Delta(G)\leq \chi'(G)\leq \Delta(G)+1$ for any graph $G$ \cite{vizing}. Given a graph, a set of edges having pairwise no common end vertex is called a \textit{matching}. The size of a maximum matching of $G$ is called \textit{the matching number} and denoted by $\nu(G)$. We say that $G$ has a \textit{perfect matching} if $\nu(G)=n/2$,  where $n=|V(G)|$.
The complete graph of order $n$ and the complete bipartite graph with sets of sizes $m$ and $n$ are denoted by $K_n$ and $K_{m,n}$, respectively. The graph $K_{1,d}$ is called  a \textit{$d$-star}. A graph is \textit{triangle-free} if it does not contain $K_3$ as an induced subgraph. 

For a given graph class $\mathbf{C}$ and two given positive integers $d$ and $m$, we define $\mathbb{M}_\mathbf{C}(d,m)$ to be the set of all graphs $G$ in $\mathbf{C}$ satisfying $\Delta(G)\leq d$ and $\nu(G)\leq m$. A graph in $\mathbb{M}_\mathbf{C} (d,m)$ with the maximum number of edges is called \textit{edge-extremal}, and the number of edges of an edge-extremal graph in $\mathbb{M}_\mathbf{C} (d,m)$ is denoted by $f_{\mathbf{C}}(d,m)$. Let $\vartriangle$ be the class of triangle-free graphs. In this paper, we assume that edge-extremal graphs have no isolated vertices since adding isolated vertices to a graph does
not increase the number of edges. 

We note that in general, if one of the two parameters  $\Delta(G)$ and $\nu(G)$ is not bounded, then the size of $G$ is not bounded neither (for general graphs). Indeed, a star has matching number one no matter how large its degree, thus its size. Likewise, the graph consisting of an unbounded number of independent $K_2$'s (that is, sharing no common vertex) is an example where the maximum degree is bounded (by one) but the matching number is not, neither the size. It follows from this discussion that, in general, one should bound both the matching number and the degree of a graph so that its size is also bounded. In this case, Vizing's Theorem provides us with a natural upper bound on the size of a graph. For any graph $G$, since the set of edges having the same color in an edge-coloring of $G$ forms a matching whose size is at most $\nu(G)$, and we have $\chi'(G)\leq \Delta(G)+1$ by Vizing's Theorem; we obtain $|E(G)|\leq (\Delta(G)+1)\nu(G)$. For given bounds $\Delta(G)\leq d$ and $\nu(G) \leq m$, an edge-extremal graph can thus have at most $dm+m$ edges. The maximum size of a general graph with $\Delta(G)\leq d$ and $\nu(G) \leq m$ obtained in \cite{hanson} and \cite{j} shows that this upper bound is actually met when some divisibility conditions hold, and we are ``pretty close'' to it otherwise. The following theorem gives not only the formula for the maximum size of a (general) graph with $\Delta(G)\leq d$ and $\nu(G) \leq m$, but also describes an edge-extremal graph. Let $\mathcal{GEN}$ denote the class of general graphs. 

\begin{theorem}[\hspace{-0.01mm}\cite{j}]
\label{thm:general}
With the preceding notation, we have, \[f_{\mathcal{GEN}}(d,m)=dm + \left\lfloor \frac{d}{2} \right\rfloor  \left\lfloor \frac{m}{\lceil \frac{d}{2} \rceil} \right\rfloor. \]

Moreover, a graph with $f_{\mathcal{GEN}}(d,m)$ edges is obtained by taking the disjoint union of $r$ copies of $d$-star and $q$ copies of 
\[ \begin{cases} 
      K_{d+1} & $if d+1 is odd$ \\
      K'_{d+1} & $if d+1 is even$,
   \end{cases}\]
where  $q$ is the largest integer such that $m = q \left\lceil \frac{d}{2} \right\rceil + r$ and $r \ge 0$; and where $K'_{d+1}$ is the graph obtained by removing a perfect matching from the complete graph $K_{d+1}$ on $d+1$ vertices, adding a new vertex $v$, and making $v$ adjacent to $d$ of the other vertices. 
\end{theorem}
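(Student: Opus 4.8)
The plan is to prove the formula by establishing the two matching inequalities. The lower bound $f_{\mathcal{GEN}}(d,m)\ge dm+\lfloor d/2\rfloor\lfloor m/\lceil d/2\rceil\rfloor$ follows from analysing the explicit construction, while the reverse inequality is the substantial part. I would dispatch the construction first, both because it is routine and because it pins down the exact quantity $\lfloor d/2\rfloor\lfloor m/\lceil d/2\rceil\rfloor$ that the upper bound must account for beyond the trivial Vizing estimate $|E(G)|\le(\Delta(G)+1)\nu(G)\le(d+1)m$ already recorded in the text.

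\emph{The construction.} For the lower bound I would verify three things for the disjoint union of $r$ copies of a $d$-star and $q$ dense components. First, every component has maximum degree $d$: a $d$-star by definition, $K_{d+1}$ because it is $d$-regular when $d$ is even, and $K'_{d+1}$ because it is almost $d$-regular when $d$ is odd. Second, each $d$-star has matching number $1$ and each dense component has matching number $\lceil d/2\rceil$ (for $K_{d+1}$, $d$ even, a maximum matching covers $d$ of the $d+1$ vertices; for $K'_{d+1}$, $d$ odd, it leaves one of the $d+2$ vertices uncovered). Since the components are vertex-disjoint, matching numbers add, so $\nu=q\lceil d/2\rceil+r=m$ and the graph lies in $\mathbb{M}_{\mathcal{GEN}}(d,m)$. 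Third, a one-line count for each parity shows that each dense component has exactly $d\lceil d/2\rceil+\lfloor d/2\rfloor$ edges, whence the total is $q(d\lceil d/2\rceil+\lfloor d/2\rfloor)+rd=d(q\lceil d/2\rceil+r)+\lfloor d/2\rfloor q=dm+\lfloor d/2\rfloor\lfloor m/\lceil d/2\rceil\rfloor$, using $q=\lfloor m/\lceil d/2\rceil\rfloor$.

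\emph{The upper bound.} Since the formula is non-decreasing in $m$, it suffices to prove $|E(G)|\le d\,\nu(G)+\lfloor d/2\rfloor\lfloor\nu(G)/\lceil d/2\rceil\rfloor$. I would first reduce to connected graphs: if this bound holds for every connected graph, then summing it over the components $H_1,\dots,H_t$ of $G$ and using superadditivity of the floor, $\sum_i\lfloor\nu(H_i)/\lceil d/2\rceil\rfloor\le\lfloor(\sum_i\nu(H_i))/\lceil d/2\rceil\rfloor$, yields the bound for $G$. For a connected graph I would fix a maximum matching $M$, let $U$ be the set of matched vertices and $W$ the unmatched ones; then $W$ is independent, and since $M$ admits no length-$3$ augmenting path, for each $x_iy_i\in M$ either one endpoint has no neighbour in $W$ or both have exactly one, common, neighbour in $W$. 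The degree-sum identity $2\,e(U)+e(U,W)\le 2d\,\nu$ then gives the intermediate estimate $|E(G)|=e(U)+e(U,W)\le d\,\nu+\tfrac12\,e(U,W)$, which is already tight for the near-complete components (where the uncovered vertex, if any, is joined to all of $U$).

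\emph{The main obstacle.} The difficulty is that this intermediate estimate is far from tight for stars, where $e(U)$ is much smaller than $d\,\nu-\tfrac12 e(U,W)$; the two extremal shapes, near-complete graphs (all matched degrees equal to $d$, so $e(U)$ maximal) and stars (a single high-degree vertex, so $e(U,W)$ maximal), sit at opposite corners of the feasible region, and the true maximum is a packing of as many near-complete components as the matching budget $\lceil d/2\rceil$ allows. Turning this into the exact bound requires jointly controlling $e(U)$, $e(U,W)$ and $|W|$ under the degree constraint $e(U,W)\le d\,|W|$ together with all augmenting-path constraints, not merely the length-$3$ ones; this is naturally organised through the Gallai--Edmonds decomposition, where the dense part corresponds to the factor-critical components of $G[D]$ and the cut set $A=N(D)\setminus D$ forces the divisibility that produces the two floors. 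I expect this final rounding step---showing that the fractional optimum is realised only by the star/near-complete packing, which is precisely where the linear-programming duality argument of Chv\'{a}tal--Hanson \cite{hanson} extracts the floors---to be the crux of the whole proof.
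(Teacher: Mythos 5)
First, note that the paper does not prove Theorem \ref{thm:general} at all: it is quoted verbatim from \cite{j} (with the original proofs in \cite{hanson} and \cite{j}), so there is no in-paper argument to measure your attempt against; it has to stand on its own. The half of your proposal that does stand is the lower bound: your verification of the construction (maximum degree $d$ of each component, matching numbers $1$ and $\lceil d/2\rceil$, the edge count $d\lceil d/2\rceil+\lfloor d/2\rfloor$ per dense component, and the totalling with $q=\lfloor m/\lceil d/2\rceil\rfloor$) is correct and complete, and the reduction of the upper bound to connected graphs via superadditivity of $\lfloor\cdot/\lceil d/2\rceil\rfloor$ is also sound.

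The genuine gap is the upper bound itself, and you acknowledge it without closing it. From a maximum matching $M$ with matched set $U$ and unmatched set $W$ you get $W$ independent, the length-$3$ augmenting-path condition, and $|E(G)|\le d\,\nu(G)+\tfrac12 e(U,W)$; but the augmenting-path condition only gives $\deg_W(x_i)+\deg_W(y_i)\le d$ per matched edge, hence $e(U,W)\le d\,\nu(G)$ and $|E(G)|\le \tfrac32 d\,\nu(G)$, which is even weaker than the Vizing bound $(d+1)\nu(G)$ already recorded in the paper and nowhere near $d\,\nu(G)+\lfloor d/2\rfloor\lfloor \nu(G)/\lceil d/2\rceil\rfloor$ (for a single $d$-star your intermediate estimate gives $d+(d-1)/2$ against the true value $d$). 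The step you defer --- simultaneously controlling $e(U)$, $e(U,W)$ and $|W|$, invoking the Gallai--Edmonds decomposition, and extracting the two floors by the Chv\'atal--Hanson duality argument --- is precisely the entire content of the theorem; naming the decomposition and predicting that ``the fractional optimum is realised only by the star/near-complete packing'' is a plan, not a proof, and none of the claims in that last paragraph (e.g.\ that the cut set $A$ ``forces the divisibility'') is established or even reduced to a checkable statement. As submitted, the proposal proves the construction attains the stated value but does not prove that nothing exceeds it.
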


In this paper, we find the size of triangle-free extremal graphs in most cases; apart from two simple cases, namely $d=1$ and $m<\lfloor d/2\rfloor$, none of them achieves the general upper bound given in Theorem \ref{thm:general}.

Let us now introduce a key lemma that describes the structure of edge-extremal graphs.  A graph $G$ is said to be \textit{ factor-critical} if $G\setminus v$ has a perfect matching for all $v\in V(G)$. By definition, being factor-critical for a graph $G$ directly implies that $|V(G)|=2\nu(G)+1$. We will use the following well-known result which is a sufficient condition for a graph to be factor-critical.

\begin{lemma}(Gallai's Lemma, \cite{g})\label{gallai} 
If $G$ is a connected graph such that for all $v\in V(G)$, $\nu(G\setminus v)=\nu(G)$, then $G$ is factor-critical and hence $|V(G)|=2\nu(G)+1$.
\end{lemma}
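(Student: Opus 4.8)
The plan is to first reduce the statement to a counting claim and then settle that claim by an alternating-path argument. Write $\nu=\nu(G)$ and $n=|V(G)|$. First I would record that $G$ has no perfect matching: if it did, then for any $v$ the graph $G\setminus v$ would have odd order $n-1$, forcing $\nu(G\setminus v)\le (n-2)/2=\nu-1$ and contradicting the hypothesis. Since $n\ge 2\nu$ always, this gives $n\ge 2\nu+1$. The heart of the proof is the reverse inequality $n\le 2\nu+1$, i.e. $n=2\nu+1$. Granting this, factor-criticality is immediate: for each $v$ the hypothesis $\nu(G\setminus v)=\nu$ yields a maximum matching $M_v$ avoiding $v$, and since $M_v$ covers $2\nu=n-1$ vertices and misses only $v$, it is a perfect matching of $G\setminus v$; this also gives $|V(G)|=2\nu+1$.

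To prove $n=2\nu+1$ I would argue by contradiction, assuming $n\ge 2\nu+2$, so that every maximum matching leaves at least two vertices uncovered. Among all triples $(M;u,v)$ consisting of a maximum matching $M$ and two $M$-exposed vertices $u,v$, choose one minimizing $k:=\mathrm{dist}_G(u,v)$; this is finite because $G$ is connected, and $k\ge 2$ since two adjacent exposed vertices would let us enlarge $M$. Pick an internal vertex $w$ on a shortest $u$–$v$ path, so that $\mathrm{dist}_G(u,w)$ and $\mathrm{dist}_G(w,v)$ are both strictly smaller than $k$. By minimality of $k$ the vertex $w$ cannot be $M$-exposed (otherwise $(M;u,w)$ would violate minimality), so $w$ is $M$-covered. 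Applying the hypothesis to $w$ produces a maximum matching $N$ that exposes $w$; if $N$ also exposed $u$, then $(N;w,u)$ would contradict minimality since $\mathrm{dist}_G(w,u)<k$, so $u$, and symmetrically $v$, is covered by $N$.

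The main obstacle is to extract the contradiction from the symmetric difference $D=M\,\triangle\,N$, whose components are alternating paths and even cycles. I would focus on the component $P$ containing $w$: since $w$ is $M$-covered and $N$-exposed, it has degree $1$ in $D$, so $P$ is a path starting at $w$ with an $M$-edge. If its other endpoint $z$ were also $N$-exposed, then $P$ would be an $N$-augmenting path, contradicting the maximality of $N$; hence $z$ is $M$-exposed. A quick check shows that the only $M$-exposed vertex on $P$ is $z$ (interior vertices have degree $2$ in $D$ and are covered by both matchings). Swapping along $P$ yields a maximum matching $M''=M\,\triangle\,P$ whose exposed set is exactly that of $M$ with $z$ removed and $w$ added. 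Since $z$ can coincide with at most one of $u,v$, at least one of them remains $M''$-exposed and lies at distance $<k$ from the $M''$-exposed vertex $w$, contradicting the minimality of $k$. I expect the delicate points to be the bookkeeping of which vertices are exposed after the swap and the verification that $P$ is genuinely a path (not a cycle) with the claimed endpoint behavior; once these are pinned down the contradiction completes the proof.
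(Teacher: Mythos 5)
Your proof is correct, but there is nothing in the paper to compare it against: the paper states Gallai's Lemma as a known result, citing Gallai's 1963 paper, and gives no proof of it (the short proof included in the paper is for Lemma 2.4, which \emph{uses} Gallai's Lemma). Your argument is self-contained and is essentially the classical proof of this result: reduce to showing $|V(G)|=2\nu(G)+1$, take a maximum matching $M$ together with two $M$-exposed vertices $u,v$ minimizing their distance over \emph{all} such triples, use the hypothesis to get a maximum matching $N$ exposing an internal vertex $w$ of a shortest $u$--$v$ path, and derive a contradiction from the component $P$ of $w$ in $M\,\triangle\,N$. The delicate points are handled correctly: $w$ has degree $1$ in $M\,\triangle\,N$ so $P$ is a path; its other endpoint $z$ must be $M$-exposed (else $P$ would be $N$-augmenting); the swap $M\,\triangle\,P$ exposes exactly $(\mathrm{exp}(M)\setminus\{z\})\cup\{w\}$; and since $z$ can coincide with at most one of $u,v$, the minimality of the distance is violated. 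One cosmetic remark: the step showing that $u$ and $v$ are $N$-covered is never actually used in your final contradiction, so it could be deleted without harm.
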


The following lemma has been first given in \cite{j} for general graphs, and then restated slightly differently in \cite{chordal}. It establishes a connection between edge-extremal graphs
and factor-critical graphs for a wide range of graph classes, including triangle-free graphs. For the sake of completeness, we also provide a short proof. Let us introduce a special class of extremal graphs that will be our main focus in the rest of the paper.

\begin{definition} $\mathcal{G}_{\mathbf{C}}(d,m)$ is the subclass of the set of edge-extremal graphs in $\mathbb{M}_{\mathbf{C}} (d,m)$ which consists of the graphs having maximum number of connected components isomorphic to a $d$-star.
\end{definition}

\begin{lemma}\cite{j,chordal}\label{1}
Let $d,m$ be natural numbers, and let $\mathbf{C}$ be a graph class that is closed under vertex deletion and closed under taking disjoint union with stars. Take a graph $G\in \mathcal{G}_{\mathbf{C}}(d,m)$. Then, every connected component of $G$ that is not a $d$-star is factor-critical.
\end{lemma}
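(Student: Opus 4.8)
The plan is to argue by contradiction, combining Gallai's Lemma (Lemma \ref{gallai}) with an exchange argument that trades a high-degree vertex for a fresh $d$-star. Suppose $G \in \mathcal{G}_{\mathbf{C}}(d,m)$ has a connected component $H$ that is not a $d$-star and is not factor-critical. Since $H$ is connected, the contrapositive of Lemma \ref{gallai} provides a vertex $v \in V(H)$ with $\nu(H \setminus v) \neq \nu(H)$. Because deleting a vertex lowers the matching number by at most one and never raises it, this forces $\nu(H \setminus v) = \nu(H) - 1$; and since $v$ lies in the single component $H$, we get $\nu(G \setminus v) = \nu(G) - 1$.

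Next I would form the competitor graph $G' := (G \setminus v) \sqcup K_{1,d}$, obtained by deleting $v$ from $G$ and adding a disjoint new $d$-star. The two closure hypotheses are exactly what is needed here: $G \setminus v \in \mathbf{C}$ because $\mathbf{C}$ is closed under vertex deletion, and then $G' \in \mathbf{C}$ because $\mathbf{C}$ is closed under disjoint union with stars. The degree bound is preserved, $\Delta(G') \leq d$, and the matching number satisfies $\nu(G') = \nu(G \setminus v) + \nu(K_{1,d}) = (\nu(G)-1) + 1 = \nu(G) \leq m$. Hence $G' \in \mathbb{M}_{\mathbf{C}}(d,m)$.

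Now I would compare sizes. Writing $d(v)$ for the degree of $v$ in $G$, we have $|E(G')| = |E(G)| - d(v) + d$. Since $G$ is edge-extremal and $G' \in \mathbb{M}_{\mathbf{C}}(d,m)$, it follows that $|E(G')| \leq |E(G)|$, hence $d(v) \geq d$; combined with $d(v) \leq \Delta(G) \leq d$ this gives $d(v) = d$ and therefore $|E(G')| = |E(G)|$, so that $G'$ is itself edge-extremal. Finally, the $d$-star components of $G$ all lie in $G \setminus H$ and are untouched by this operation, whereas $G'$ contains in addition the brand-new $d$-star; thus $G'$ is an edge-extremal graph in $\mathbb{M}_{\mathbf{C}}(d,m)$ with strictly more components isomorphic to a $d$-star than $G$. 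This contradicts the defining property of $\mathcal{G}_{\mathbf{C}}(d,m)$, and so every component of $G$ that is not a $d$-star must be factor-critical.

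As for difficulty, the argument is short once the correct exchange is identified; the only point needing care is the simultaneous bookkeeping that certifies $G' \in \mathbb{M}_{\mathbf{C}}(d,m)$ (degree, matching number, and membership in $\mathbf{C}$ via \emph{both} closure properties) while pinning down $d(v) = d$. The genuinely essential ingredient, and the reason the hypothesis singling out the maximum number of $d$-star components appears in the definition of $\mathcal{G}_{\mathbf{C}}(d,m)$, is the last step: edge-extremality alone only yields that $G'$ ties $G$, so one must leverage the maximality of the $d$-star count to convert this tie into a contradiction.
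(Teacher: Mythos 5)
Your proof is correct and follows essentially the same route as the paper's: invoke Gallai's Lemma (contrapositively) to extract a vertex $v$ whose deletion drops the matching number, replace $v$ by a disjoint $d$-star, and contradict the maximality of the number of $d$-star components in the definition of $\mathcal{G}_{\mathbf{C}}(d,m)$. Your write-up is just a more detailed version of the paper's argument, making explicit the bookkeeping (both closure properties, $\nu(G')=\nu(G)$, and $d(v)=d$) that the paper compresses into ``one can observe.''
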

\begin{proof}
Suppose on the contrary that $W$ is a connected component of $G$ which is neither a $d$-star nor factor-critical. By Lemma
	\ref{gallai}, there is a vertex $v$ in $W$ such that $\nu(W\setminus v)<\nu(W)$. Now we construct a new graph $G'$ whose components are the components of $G$ except $W$, $W\setminus v$ and a $d$-star. One can observe that $G'\in \mathbb{M}_\mathbf{C}(d,m)$ and $|E(G')|=|E(G\setminus v)| +d\geq |E(G)|$. So $G'$  is an edge-extremal graph in $\mathbb{M}_\mathbf{C}(d,m)$ with more star components than in $G$, a contradiction with the assumptions on $G$.
\end{proof}




Lastly, we derive a result that will be useful in Section \ref{sec:d456}. Let $\chi(G)$ denote the minimum number of colors needed to color all vertices of $G$ in such a way that two adjacent vertices get different colors.

\begin{lemma}\cite{a}\label{erdos}
	Let $r\geq 3$. For any graph $G$ on $n$ vertices, at most two of the following properties can hold:
	\begin{enumerate}
		\item $G$ does not contain $K_r$ as an induced subgraph,
		\item $\delta(G)> \dfrac{3r-7}{3r-4} n$,
		\item $\chi(G)\geq r$.
		\end{enumerate}
\end{lemma}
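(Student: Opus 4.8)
\noindent This statement is a repackaging of the Andr\'asfai--Erd\H{o}s--S\'os theorem. Since $r$ pairwise adjacent vertices always induce a copy of $K_r$, property (1) is just $K_r$-freeness, and asserting that at most two of (1)--(3) can hold is equivalent to the implication that a $K_r$-free graph $G$ on $n$ vertices with $\delta(G)>\frac{3r-7}{3r-4}n$ satisfies $\chi(G)\le r-1$ (that is, property (3) fails). I would prove this implication by induction on $r$.

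\noindent\textbf{Base case $r=3$.} Here the threshold is $\frac{2}{5}n$ and the claim is that a triangle-free graph with $\delta(G)>\frac{2}{5}n$ is bipartite. Suppose not, and let $C$ be a shortest odd cycle, of length $2k+1\ge 5$; being shortest, $C$ is chordless. The local key step is that every $x\notin C$ has at most two neighbours on $C$: two neighbours of $x$ on $C$ are non-adjacent (triangle-freeness) and cannot be at arc-distance more than $2$ (else a cycle through $x$ and the shorter arc would be an odd cycle shorter than $C$), so the neighbours of $x$ lie pairwise at arc-distance exactly $2$, and at most two vertices of $C_{2k+1}$ can do so. Double counting the edges between $C$ and $V(G)\setminus C$ then gives at least $(2k+1)(\delta-2)$ such edges (sum of degrees over $C$ minus the $2(2k+1)$ internal endpoints) and at most $2\big(n-(2k+1)\big)$ of them; comparing yields $(2k+1)\delta\le 2n$, hence $\delta\le \frac{2n}{2k+1}\le\frac{2n}{5}$, contradicting $\delta>\frac{2n}{5}$.

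\noindent\textbf{Inductive step $r\ge 4$.} If $\omega(G)\le r-2$ then $G$ is already $K_{r-1}$-free, and since $\frac{3r-7}{3r-4}>\frac{3(r-1)-7}{3(r-1)-4}=\frac{3r-10}{3r-7}$ (equivalently $(3r-7)^2>(3r-10)(3r-4)$, i.e.\ $49>40$), the hypothesis gives $\delta(G)>\frac{3r-10}{3r-7}n$ and the inductive hypothesis for $r-1$ yields $\chi(G)\le r-2$. So it remains to treat $\omega(G)=r-1$. A useful reduction linking this to the base case is that the common neighbourhood of any $(r-3)$-clique induces a triangle-free graph, since a triangle inside it together with the clique would form a $K_r$; for $r=3$ this is exactly the triangle-freeness of $G$ itself, so the inductive scheme is really about transporting the $r=3$ argument to these ``links''.

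\noindent\textbf{Main obstacle.} The hard part is precisely the case $\omega(G)=r-1$. A tempting shortcut is to pass to the neighbourhood $G[N(v)]$ of a vertex $v$ of minimum degree: it is $K_{r-1}$-free on $\delta$ vertices, and a short inclusion--exclusion gives $\delta(G[N(v)])\ge 2\delta-n$, which one checks is strictly larger than $\frac{3r-10}{3r-7}\delta=\frac{3r-10}{3r-7}|N(v)|$ exactly when $\delta>\frac{3r-7}{3r-4}n$ --- so the relative threshold survives and induction shows every neighbourhood is $(r-2)$-colourable. However, local $(r-2)$-colourability of neighbourhoods does not by itself produce a global $(r-1)$-colouring of $G$; assembling the clique/link structure into an honest partition of $V(G)$ into $r-1$ independent sets is the crux and the step I expect to be genuinely delicate. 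As this is a classical result, in the present paper it is invoked directly from \cite{a}.
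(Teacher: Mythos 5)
Your proposal is not a complete proof of the lemma as stated. The base case $r=3$ is handled correctly and completely: the shortest odd cycle $C_{2k+1}$ ($k\ge 2$) is chordless, every vertex off the cycle has at most two neighbours on it (two neighbours at odd arc-distance give a triangle or a shorter odd cycle, at even arc-distance greater than $2$ give a shorter odd cycle), and the double count $(2k+1)(\delta-2)\le 2\bigl(n-(2k+1)\bigr)$ yields $\delta\le 2n/(2k+1)\le 2n/5$, a contradiction. But for $r\ge 4$ your induction only disposes of the easy case $\omega(G)\le r-2$ (where the monotonicity $\frac{3r-7}{3r-4}>\frac{3r-10}{3r-7}$ lets you quote the hypothesis for $r-1$). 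The case $\omega(G)=r-1$, which is the actual content of the Andr\'asfai--Erd\H{o}s--S\'os theorem for $r\ge 4$, is left open: as you yourself note, knowing that each neighbourhood $G[N(v)]$ is $K_{r-1}$-free with relative minimum degree above $\frac{3r-10}{3r-7}$, hence $(r-2)$-colourable, does not by itself produce a partition of $V(G)$ into $r-1$ independent sets, and you supply no mechanism (e.g.\ the structural analysis of a maximum clique and its common neighbourhoods, or the blow-up-of-$C_5$ description) to glue these local colourings together. So there is a genuine gap, which you flag honestly but do not close.

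For comparison: the paper offers no proof of this lemma at all---it is invoked verbatim from the cited Andr\'asfai--Erd\H{o}s--S\'os paper, and the only consequence actually used later is Corollary \ref{key}, i.e.\ the $r=3$ instance. So your base-case argument would in fact serve as a self-contained proof of everything the paper needs, but it does not establish the lemma in the generality stated ($r\ge 3$ arbitrary), and the inductive scheme you sketch would need the $\omega(G)=r-1$ case to be worked out before it could replace the citation.
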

The following corollary states that for $r=3$, if properties 1 and 2 of Lemma \ref{erdos} hold, then property 3 is not satisfied.
\begin{corollary}\label{key}
	Any triangle-free graph of order $n$ with minimum  degree greater than $\dfrac{2n}{5}$ is bipartite.
\end{corollary}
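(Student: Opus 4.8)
The plan is to apply Lemma \ref{erdos} directly with $r=3$, so the first step is simply to substitute $r=3$ into its three properties and read off what they say. Property 1 becomes ``$G$ is triangle-free,'' since a copy of $K_3$ is automatically an induced subgraph (a complete graph has no missing edges to destroy). Property 2 becomes
\[
\delta(G) > \frac{3\cdot 3 - 7}{3\cdot 3 - 4}\, n = \frac{2}{5}\,n,
\]
which is exactly the hypothesis of the corollary. Property 3 reads $\chi(G)\geq 3$.

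Next I would observe that, under the hypotheses of the corollary, properties 1 and 2 both hold: the graph is triangle-free by assumption, and its minimum degree exceeds $2n/5$ by assumption. Since Lemma \ref{erdos} guarantees that at most two of the three properties can hold at once, property 3 must therefore fail. That is, $\chi(G) < 3$, or equivalently $\chi(G)\leq 2$.

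Finally I would invoke the standard equivalence that a graph is bipartite if and only if it admits a proper $2$-colouring, i.e.\ if and only if $\chi(G)\leq 2$. Combining this with the previous step yields that $G$ is bipartite, completing the argument. I do not expect any genuine obstacle here: the only work is the bookkeeping of plugging $r=3$ into the fractional degree bound and translating $\chi(G)\leq 2$ into bipartiteness, while all of the substantive content is already packaged inside Lemma \ref{erdos}.
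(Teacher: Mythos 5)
Your proof is correct and matches the paper's own reasoning exactly: the paper derives the corollary from Lemma \ref{erdos} by setting $r=3$, noting that properties 1 and 2 hold by hypothesis so property 3 must fail, hence $\chi(G)\leq 2$ and $G$ is bipartite. No gaps or differences to report.
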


\section{Edge-extremal triangle-free graphs with $d\geq m$}\label{sec:dgeqm}
In this section, we find the maximum number of edges in a triangle-free graph with matching number at most $m$ and degree at most $d$ and where $d> m$.  Besides, we also solve the case where $d=m$. Solving these cases allows us to further strengthen our assumption in Lemma \ref{1} on the structure of an edge-extremal triangle-free graph. Stated in Corollary \ref{cor:structure-not-star}, this structural property will play a key role to obtain our main results for $d<m$ in Sections \ref{sec:d456} and \ref{sec:conc}. First, let us bound the number of edges in a factor-critical triangle-free graph in terms of the matching number.

\begin{lemma}\label{lem:factor-critical-triangle-free}
Let $H$ be a factor-critical triangle-free graph. Then, we have $|E(H)|\leq 1+\nu(H)^2$.
\end{lemma}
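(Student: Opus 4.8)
The plan is to reduce the statement to a purely extremal bound on triangle-free \emph{non-bipartite} graphs and then prove that bound by induction. First I would record two consequences of the hypothesis. By definition of factor-critical, $H$ has exactly $n=2\nu(H)+1$ vertices. Moreover $H$ is non-bipartite: a bipartite graph on an odd number of vertices has colour classes of unequal size, so deleting a vertex from the smaller class can never leave a perfect matching, and hence no bipartite graph can be factor-critical (for $\nu(H)\ge 1$; the case $\nu(H)=0$, where $H=K_1$, is trivial since $0\le 1$). Writing $m=\nu(H)$ and $N=2m+1$, the target value $1+m^2$ is exactly $\lfloor (N-1)^2/4\rfloor+1$, so it suffices to establish the following claim: every triangle-free non-bipartite graph $G$ on $N$ vertices satisfies $|E(G)|\le \lfloor (N-1)^2/4\rfloor+1$.

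I would prove this claim by induction on $N$, allowing $G$ to be disconnected so that the inductive step is unconditional. The base case is $N=5$: a triangle-free non-bipartite graph contains an induced odd cycle of length at least $5$, so on $5$ vertices it must contain a spanning $C_5$, and any further edge would be a chord of that $C_5$ and create a triangle; hence $|E(G)|\le 5=\lfloor 16/4\rfloor+1$. For $N\ge 6$, since $G$ is triangle-free and non-bipartite, Corollary~\ref{key} forbids $\delta(G)>2N/5$, so I may fix a vertex $v$ with $d(v)=\delta(G)\le 2N/5$ and split according to whether $G-v$ is bipartite.

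If $G-v$ is non-bipartite, the induction hypothesis applied to $G-v$ (on $N-1$ vertices) gives $|E(G-v)|\le \lfloor (N-2)^2/4\rfloor+1$, and adding back the $d(v)$ edges at $v$ reduces the claim to the elementary inequality $d(v)\le \lfloor (N-1)^2/4\rfloor-\lfloor (N-2)^2/4\rfloor=\lfloor (N-1)/2\rfloor$, which holds because $2N/5\le (N-1)/2$ for $N\ge 5$. The interesting case is when $G-v$ is bipartite, with parts $X,Y$ of sizes $p,q$ and $p+q=N-1$. Here non-bipartiteness of $G$ forces $v$ to have a neighbour in each of $X$ and $Y$ (otherwise $v$ could be absorbed into one class and $G$ would be bipartite), and triangle-freeness then forbids every edge between $N(v)\cap X$ and $N(v)\cap Y$. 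Setting $s_x=|N(v)\cap X|\ge 1$ and $s_y=|N(v)\cap Y|\ge 1$, this yields
\[ |E(G)|\le (pq-s_x s_y)+(s_x+s_y)=pq+1-(s_x-1)(s_y-1)\le pq+1\le \lfloor (N-1)^2/4\rfloor+1, \]
which closes the induction. Applying the claim with $N=2m+1$ gives $|E(H)|\le m^2+1=1+\nu(H)^2$.

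I expect the crux to be the bipartite case of the induction: it is precisely the point where non-bipartiteness forces $v$ to see both classes, so that triangle-freeness deletes an entire $s_x s_y$ block of potential cross-edges. This is exactly the $\Theta(N)$ saving below Mantel's bound $\lfloor N^2/4\rfloor$ that the statement demands, and it is where the factor-critical hypothesis (through non-bipartiteness) does the real work; a naive degree count only recovers the weaker bound $m^2+m$. The remaining effort is routine but must be done carefully, namely the floor bookkeeping in the non-bipartite case and confirming the few small values of $N$, together with the remark that permitting disconnected graphs keeps the inductive statement clean (extra bipartite components only slacken the bound).
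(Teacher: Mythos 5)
Your proof is correct, but it follows a genuinely different route from the paper's. The paper argues directly and non-inductively: it takes a smallest induced odd cycle $C_{2s+1}$ in $H$ (which exists since factor-critical graphs are non-bipartite, and has $s\geq 2$ by triangle-freeness), bounds the edges inside $H-C_{2s+1}$ by $(h-s)^2$ via Tur\'an's (Mantel's) theorem, bounds the edges between the cycle and the rest by $s$ per outside vertex (more neighbours on the cycle would force two consecutive ones, hence a triangle), and then closes with the one-line computation $(2s+1)+(h-s)^2+(2h-2s)s\leq (s^2+1)+(h-s)^2+(2h-2s)s=1+h^2$. You instead prove the stronger, classical statement that \emph{every} non-bipartite triangle-free graph on $N$ vertices (any parity, any connectivity) has at most $\lfloor (N-1)^2/4\rfloor+1$ edges, by induction on $N$: Corollary~\ref{key} (Andr\'asfai--Erd\H{o}s--S\'os) supplies a vertex of degree at most $2N/5$, and the case analysis on whether $G-v$ is bipartite does the rest, with the bipartite case providing the crucial saving of $s_x s_y$ cross-edges. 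Both proofs hinge on the same structural fact (factor-critical $\Rightarrow$ non-bipartite), and your floor bookkeeping and degree estimates check out; the only cosmetic caveat is that your base case $N=5$ silently covers $\nu(H)\leq 1$ only because those cases are trivial or vacuous, which is worth one sentence. What the paper's approach buys is brevity and self-containedness modulo Mantel's theorem, with the odd order $2h+1$ exploited from the start; what yours buys is a more general extremal result (tight for all $N$, not just odd) and an economical reuse of Corollary~\ref{key}, which the paper otherwise only deploys later in Lemma~\ref{lem:lower-bound-Z(d)}, at the cost of a longer inductive argument.
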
	
\begin{proof}
Since $H$ is factor-critical, we have $|V(H)|=2h+1$ where $h:=\nu(H)$. 
Bipartite graphs are not factor-critical, therefore $H$ has an odd cycle. Let us take the smallest (induced) odd cycle $C_{2s+1}$ in $H$. Notice that $s\geq 2$ since $H$ is triangle-free. Moreover, there is at most $(h-s)^2$ edges within $H-C_{2s+1}$ by Turan's theorem since $H-C_{2s+1}$ has $2h-2s$ vertices. On the other hand, any vertex in $H-C_{2s+1}$ can have at most $s$ neighbors in $C_{2s+1}$ because otherwise there would be a triangle. As a result, we get 
\begin{eqnarray*}
	|E(G)|&\leq& (2s+1)+(h-s)^2+(2h-2s)s\\
	&\leq & (s^2+1)+(h-s)^2+(2h-2s)s=1+h^2,
\end{eqnarray*} 
which completes the proof.
\end{proof}

\noindent By using Lemma \ref{lem:factor-critical-triangle-free}, we can derive the following structural property for the graphs in $\mathcal{G}_{\vartriangle}(d,m)$.
	
\begin{lemma}\label{lem:notstar}
Let $G\in \mathcal{G}_{\vartriangle}(d,m)$. Then, for any connected component $H$ of $G$ that is not a $d$-star, we have
\begin{itemize}
\item[(i)] $|E(H)|\leq 1+\nu(H)^2$, and
\item[(ii)] $\nu(H)\geq d$.
\end{itemize}
\end{lemma}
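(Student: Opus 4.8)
The plan is to establish the two parts separately, leveraging the structure guaranteed by Lemma \ref{1} and the bound from Lemma \ref{lem:factor-critical-triangle-free}. For part (i), I would first invoke Lemma \ref{1}: since $G \in \mathcal{G}_{\vartriangle}(d,m)$ and the class of triangle-free graphs is closed under vertex deletion and under disjoint union with stars, every connected component $H$ that is not a $d$-star must be factor-critical. Part (i) then follows immediately by applying Lemma \ref{lem:factor-critical-triangle-free} to $H$, giving $|E(H)| \leq 1 + \nu(H)^2$.

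For part (ii), the goal is to show $\nu(H) \geq d$, and I expect this to be the substantive part of the argument. The natural strategy is to argue by contradiction: suppose $\nu(H) \leq d-1$ for some non-star component $H$. The idea is that such a component is ``inefficient'' — it uses up matching-number budget without contributing enough edges relative to what a $d$-star would contribute. Since $H$ is factor-critical with $\nu(H) = h$, it has $2h+1$ vertices and, by part (i), at most $1 + h^2$ edges. The plan is to compare $H$ against a replacement configuration built from $d$-stars (and possibly smaller pieces) that fits within the same matching-number budget but has strictly more edges, contradicting edge-extremality — or, if the edge counts tie, contradicting the maximality of the number of $d$-star components that defines membership in $\mathcal{G}_{\vartriangle}(d,m)$.

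Concretely, I would compare the edge density of $H$ with that of a $d$-star. A $d$-star contributes $d$ edges while consuming matching number $1$, so its ``edges-per-unit-matching'' is $d$. A factor-critical component $H$ with $\nu(H) = h \leq d-1$ contributes at most $1 + h^2$ edges while consuming matching number $h$; its ratio is at most $(1+h^2)/h = h + 1/h \leq (d-1) + 1 = d$ (using $h \geq 2$, which holds since a triangle-free factor-critical graph contains an odd cycle of length at least $5$, forcing $h \geq 2$). The delicate point is that this ratio comparison is tight or nearly tight, so I would need to track the comparison carefully — replacing $H$ by $h$ disjoint $d$-stars keeps the matching number at $h$ but yields $dh \geq (1 + h^2)$ edges, with the inequality strict when $h < d-1$ and requiring the maximal-star-count argument to break the tie when $h = d-1$. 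I must also confirm the replacement graph stays within degree bound $d$ and remains triangle-free, both of which are automatic for disjoint stars.

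The main obstacle will be handling the boundary case $h = d-1$ cleanly, where $dh = d(d-1) = d^2 - d$ and $1 + h^2 = 1 + (d-1)^2 = d^2 - 2d + 2$, so the star replacement gains $dh - (1+h^2) = d - 1 > 0$ edges whenever $d \geq 2$; thus the strict edge gain actually holds throughout and no tie-breaking via component count is needed once $d \geq 2$, while the trivial case $d = 1$ can be dispatched directly. I would also need to verify that $\nu(H) \geq d$ is consistent with $\Delta(H) \leq d$ — there is no conflict, since a factor-critical component can have large matching number — so the only real content is ruling out $\nu(H) < d$ via the replacement-and-count argument above.
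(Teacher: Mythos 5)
Your proposal is correct and follows essentially the same route as the paper: part (i) via Lemma \ref{1} plus Lemma \ref{lem:factor-critical-triangle-free}, and part (ii) by noting $\nu(H)\geq 2$ (triangle-free and non-bipartite) and replacing $H$ by $\nu(H)$ disjoint $d$-stars, which strictly gains edges since $1+h^2 < dh$ whenever $2\leq h\leq d-1$, contradicting extremality. Only a harmless arithmetic slip: at $h=d-1$ the gain is $d-2$, not $d-1$, but this is still positive for $d\geq 3$ and the case $h=d-1=1$ cannot occur since $h\geq 2$, so no tie-breaking is ever needed, exactly as in the paper.
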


\begin{proof}
Since $H$ is not a star, by Lemma \ref{1}, $H$ is factor-critical. Then, part (i) follows from Lemma \ref{lem:factor-critical-triangle-free}.
Now, suppose $\nu(H)<d$. Since $H$ is triangle-free and non-bipartite, we have $\nu(H)\geq 2$. Thus, we get $|E(H)|\leq 1+\nu(H)^2<d\cdot \nu(H)$. Then, take $\nu(H)$ copies of $d$-stars instead of $H$; this increases the number of edges while keeping $\nu(G)$ and $\Delta(G)$ the same. This contradicts with $G\in \mathcal{G}_{\vartriangle}(d,m)$. Therefore, we get $\nu(H)\geq d$, so the result follows.
\end{proof}

\noindent Lemma \ref{lem:notstar} allows us to answer the cases $d > m\geq 1$ (in Theorem \ref{thm:case-d-greater-m}) and $d=m$   (in Theorem \ref{thm:case-d-d}).

\begin{theorem}\label{thm:case-d-greater-m}
With the preceding notation, $f_{\vartriangle}(d,m)=dm$ for $d>m\geq 1$.
\end{theorem}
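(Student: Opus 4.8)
The plan is to establish the bound $f_{\vartriangle}(d,m) = dm$ for $d > m \geq 1$ by proving the two inequalities separately. For the lower bound $f_{\vartriangle}(d,m) \geq dm$, I would simply exhibit a triangle-free graph in $\mathbb{M}_{\vartriangle}(d,m)$ with $dm$ edges: take the disjoint union of $m$ copies of the $d$-star $K_{1,d}$. This graph is triangle-free (stars are bipartite), has maximum degree exactly $d$, and has matching number exactly $m$ (one edge per star), hence lies in $\mathbb{M}_{\vartriangle}(d,m)$, and it has $dm$ edges. So the work is entirely in the upper bound.

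For the upper bound, I would pass to a graph $G \in \mathcal{G}_{\vartriangle}(d,m)$, which by definition is edge-extremal and has the maximum possible number of $d$-star components. The strategy is to show that \emph{every} connected component of such a $G$ must be a $d$-star, from which $|E(G)| \leq dm$ follows immediately: each $d$-star component contributes $d$ edges and matching number $1$, so with total matching number at most $m$ there are at most $m$ such components, giving at most $dm$ edges. To rule out non-star components, suppose for contradiction that $H$ is a connected component of $G$ that is not a $d$-star. Then Lemma~\ref{lem:notstar} applies and gives both $|E(H)| \leq 1 + \nu(H)^2$ and $\nu(H) \geq d$.

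The key step is then to compare $H$ against a collection of $d$-stars on the same ``matching budget.'' Since $\nu(H) \geq d > m \geq 1$, in particular $\nu(H) \geq 2$, and replacing $H$ by $\nu(H)$ copies of the $d$-star keeps both $\Delta \leq d$ and $\nu(G) \leq m$ unchanged (the matching number contribution is exactly $\nu(H)$ either way) while changing the edge count from at most $1 + \nu(H)^2$ to exactly $d\cdot\nu(H)$. Because $\nu(H) \geq d$, we would expect $d \cdot \nu(H) \leq \nu(H)^2$, so this replacement does not strictly increase the edge count in general; the point is rather that it strictly increases the number of star components while not decreasing the number of edges, contradicting the choice of $G \in \mathcal{G}_{\vartriangle}(d,m)$. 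I would verify $1 + \nu(H)^2 \leq d \cdot \nu(H)$ using $d > m \geq 1$ together with $\nu(H) \geq d$; writing $\nu(H) = d + t$ with $t \geq 0$, the inequality $d\cdot\nu(H) - (1+\nu(H)^2) = \nu(H)(d - \nu(H)) - 1 = -(d+t)t - 1$ is negative, which shows the edge count does not increase, so the contradiction comes from the star-count maximality rather than from a strict edge gain.

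The main obstacle is precisely this subtlety: since $\nu(H) \geq d$, the replacement argument does \emph{not} yield a strict increase in edges, so I cannot directly contradict edge-extremality. The argument must instead lean on the definition of $\mathcal{G}_{\vartriangle}(d,m)$ as the edge-extremal graphs with the \emph{maximum number of star components}, and I must check carefully that the replacement preserves edge-extremality (edges do not decrease) while strictly increasing the star count. I would therefore be careful to confirm the inequality $d\cdot\nu(H) \geq 1 + \nu(H)^2$ holds with the correct orientation so that the new graph is still edge-extremal, which is what makes the contradiction with maximality of star components valid.
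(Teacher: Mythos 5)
There is a genuine gap, and it sits exactly where you flagged it. Once you invoke Lemma~\ref{lem:notstar}(ii) to get $\nu(H)\geq d$ for a non-star component $H$, the proof is already finished: since $H$ is a component of $G$ we have $\nu(H)\leq \nu(G)\leq m$, and the hypothesis $d>m$ gives $\nu(H)\geq d>m\geq \nu(H)$, a contradiction. This is precisely the paper's argument -- no replacement by stars is needed at all. You had all three facts in hand ($\nu(H)\geq d$, $d>m$, $\nu(H)\leq m$) but never combined them; instead you launched an exchange argument whose validity you yourself undermine. For that argument to contradict the maximality of the number of star components in $\mathcal{G}_{\vartriangle}(d,m)$, the new graph must still be edge-extremal, i.e.\ you need $d\cdot\nu(H)\geq |E(H)|$, and the only upper bound you have on $|E(H)|$ is $1+\nu(H)^2$. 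Your own computation shows $d\cdot\nu(H)-(1+\nu(H)^2)=-(d+t)t-1<0$ when $\nu(H)=d+t$ with $t\geq 0$, so the inequality you say you ``would be careful to confirm'' is in fact false in the regime $\nu(H)\geq d$; the replacement may strictly decrease the edge count, the resulting graph need not be edge-extremal, and the contradiction with star-count maximality evaporates. (The inequality $1+\nu(H)^2<d\,\nu(H)$ is only available when $\nu(H)<d$, which is exactly how it is used inside the proof of Lemma~\ref{lem:notstar}(ii) -- you cannot reuse it after concluding $\nu(H)\geq d$.)

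The rest of your outline is fine: the lower bound via $m$ disjoint copies of $K_{1,d}$ is correct (the paper leaves it implicit, since once all components of an edge-extremal $G$ are $d$-stars, extremality forces exactly $m$ of them and $|E(G)|=dm$), and the reduction of the upper bound to ``no non-star components'' is the right frame. The fix is simply to delete the replacement argument and close the case with the one-line matching-number contradiction $d\leq \nu(H)\leq \nu(G)\leq m<d$.
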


\begin{proof}
Assume $d>m$, and take $G\in\mathcal{G}_{\vartriangle}(d,m)$. If $G$ has a component $G_1$ that is not a $d$-star, then by Lemma \ref{lem:notstar} (ii), we would get $m\geq\nu(G_1)\geq d$, which is a contradiction. Hence, all the components of $G$ are $d$-stars, so we get $|E(G)|=dm$.
\end{proof}

\begin{theorem}\label{thm:case-d-d}
With the preceding notation, $f_{\vartriangle}(1,1)=1$ and $f_{\vartriangle}(d,d)=d^2+1$ for $d\geq2$.
\end{theorem}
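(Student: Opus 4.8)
The case $d=m=1$ is immediate: a graph with $\Delta(G)\le 1$ is a disjoint union of edges and isolated vertices, and $\nu(G)\le 1$ forces at most one edge, so $f_{\vartriangle}(1,1)=1$. For $d=m\ge 2$ the plan is to prove the matching upper and lower bounds separately.

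For the upper bound I would take $G\in\mathcal{G}_{\vartriangle}(d,d)$ and analyze its components exactly as in the proof of Theorem \ref{thm:case-d-greater-m}. By Lemma \ref{lem:notstar}(ii), every component $H$ of $G$ that is not a $d$-star satisfies $\nu(H)\ge d$. Since matching numbers add over components and $\nu(G)\le d$, there can be at most one non-star component, and if one exists it has $\nu(H)=d$ and $G=H$ has no further components (a $d$-star would contribute an extra unit to $\nu(G)$). In that case Lemma \ref{lem:notstar}(i) gives $|E(G)|\le 1+d^2$. Otherwise all components are $d$-stars, so their number is exactly $\nu(G)\le d$ and $|E(G)|\le d^2<d^2+1$. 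Either way $f_{\vartriangle}(d,d)\le d^2+1$.

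For the lower bound I would exhibit a triangle-free graph on $2d+1$ vertices with $\Delta\le d$, $\nu=d$, and exactly $d^2+1$ edges; the equality analysis of Lemma \ref{lem:factor-critical-triangle-free} (which forces the shortest odd cycle to be a $C_5$) suggests the right shape. Concretely, take a $5$-cycle $v_0v_1v_2v_3v_4$, two independent sets $A=\{a_1,\dots,a_{d-2}\}$ and $B=\{b_1,\dots,b_{d-2}\}$, all edges between $A$ and $B$ (a copy of $K_{d-2,d-2}$), and join every vertex of $A$ to $v_0,v_2$ and every vertex of $B$ to $v_1,v_3$. A direct count gives $5+(d-2)^2+4(d-2)=d^2+1$ edges, and the vertices of $A\cup B$ together with $v_0,v_1,v_2,v_3$ have degree exactly $d$, while $v_4$ has degree $2$. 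Triangle-freeness follows because $A$ and $B$ are independent, no two cycle vertices attached to a common $a_i$ (resp.\ $b_j$) are adjacent, and the cycle-neighbourhoods $\{v_0,v_2\}$ and $\{v_1,v_3\}$ are disjoint, so no triangle $a_ib_jv$ can occur. Finally, matching $a_i$ to $b_i$ for all $i$ together with the edges $v_0v_1$ and $v_2v_3$ yields a matching of size $d$ leaving only $v_4$ uncovered, while $\nu\le\lfloor(2d+1)/2\rfloor=d$ forces $\nu=d$.

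The main work lies in the lower bound, where one must verify simultaneously that the proposed graph is triangle-free, stays within the degree bound, and still attains matching number $d$. The delicate point is the placement of the edges from $A$ and $B$ into the $C_5$: the attachment vertices must be chosen so that the cycle-neighbourhoods of $A$ and $B$ are disjoint (otherwise a triangle $a_ib_jv$ appears) while each cycle vertex stays within degree $d$. The disjointness $\{v_0,v_2\}\cap\{v_1,v_3\}=\varnothing$ is exactly what makes this possible, and once the attachment pattern is fixed the degree and matching computations are routine.
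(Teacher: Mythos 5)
Your proof is correct and follows essentially the same route as the paper: the upper bound is the identical argument via Lemma \ref{lem:notstar} (at most one non-star component when $\nu(G)\le d$, then part (i) gives $|E(G)|\le d^2+1$), and your explicit lower-bound construction is isomorphic to the paper's graph $A_d$, namely the blow-up of $C_5$ with parts of sizes $1,1,d-1,d-1,1$. The only cosmetic difference is that you verify the all-stars case directly ($|E(G)|\le d^2$) where the paper rules it out using the lower bound first, and you spell out the construction rather than citing a figure.
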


\begin{proof}
Firstly, any graph $G$ with $\Delta(G)=\nu(G)=1$ can contain only one edge, so $f_{\vartriangle}(1,1)=1$ follows. Now, consider the graph $A_d$ shown in Figure \ref{fig:A_d}. It can be easily seen that $A_d\in \mathbb{M}_{\vartriangle}(d,d)$ and $|E(A_d)|=d^2+1$. Then, let us take $G\in \mathcal{G}_{\vartriangle}(d,m)$. By definition of $G$, we have $|E(G)|\geq |E(A_d)|$ giving $|E(G)|\geq d^2+1$. If all the components of $G$ are $d$-stars, then we would get $|E(G)|\leq d^2$, which is a contradiction. Hence, $G$ has at least one component which is not a $d$-star; let us denote it by $G_1$. By Lemma \ref{lem:notstar} (ii), we have $\nu(G_1)\geq d$. Since $d=\nu(G)\geq \nu(G_1)$ we obtain $G_1=G$. Now, by Lemma \ref{lem:notstar} (i), we have $|E(G)|\leq d^2+1$, which completes the proof.

\end{proof}

\begin{figure}[h!]
 \centering
	\includegraphics[scale=0.2]{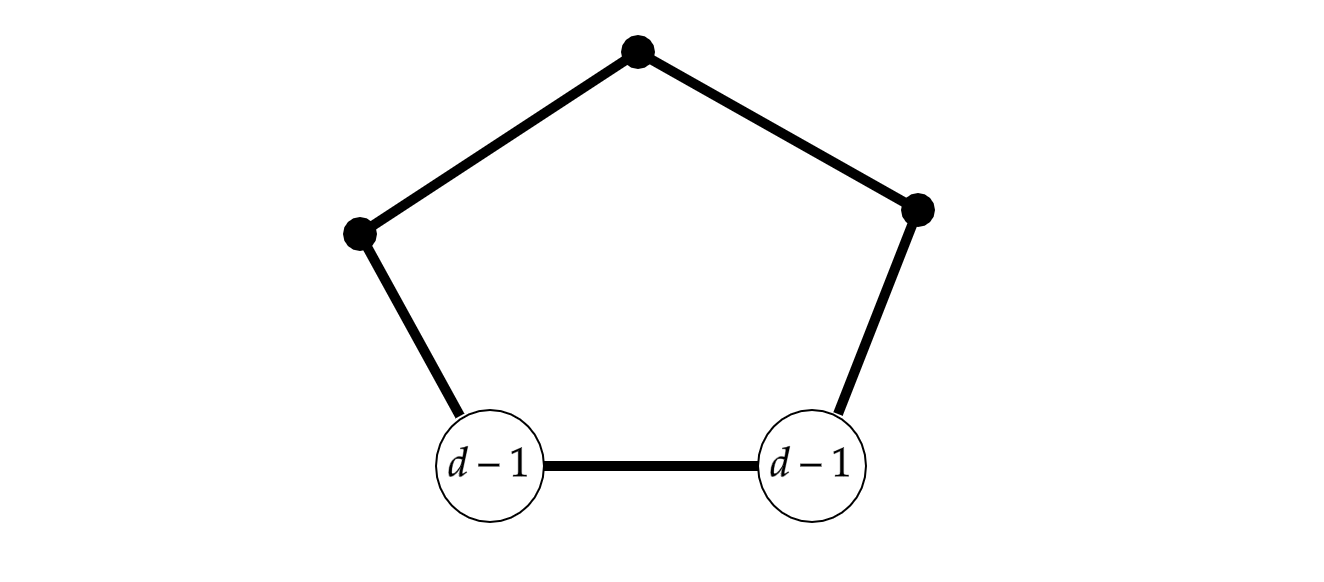}
   \caption{\small{$A_d$ is a graph on $2d+1$ vertices which is a blow-up of a cycle of length five. A circle and the number inside it represent an independent set of that size, and straight lines between two circles or between a vertex and a circle indicate that all possible edges are present. } }
   	\label{fig:A_d}
\end{figure}

We close this section with a corollary of Lemmas \ref{1} and \ref{lem:notstar}, which states that for any edge-extremal graph in $\mathcal{G}_{\vartriangle}(d,m)$ (whose number of $d$-star components is maximum), every component $H$ of it which is not a $d$-star is a factor-critical and edge-extremal graph in  $\mathbb{M}_{\vartriangle}(d,\nu(H))$ with matching number $\nu(H)\geq d$. An extremal graph with these properties will be useful to prove our results in Section \ref{sec:d456}.

	\begin{corollary}\label{cor:structure-not-star}
		Let $d$ and $m$ be natural numbers, and let $G\in \mathcal{G}_{\vartriangle}(d,m)$. Then, for every connected component $H$ of $G$, one of the following is true:
		\begin{itemize}
			\item[(i)] $H$ is a $d$-star.
			\item[(ii)] $|E(H)|=f_{\vartriangle}(d,\nu(H))$ and $|V(H)|=2\cdot \nu(H)+1$ where $\nu(H)\geq d$.
		\end{itemize}
	\end{corollary}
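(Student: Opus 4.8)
The plan is to split on whether the component $H$ is a $d$-star. If $H$ is a $d$-star, then conclusion (i) holds and there is nothing to prove, so the real work is the case where $H$ is not a $d$-star, in which I aim to verify every clause of (ii). Two of the three clauses are essentially immediate from the machinery already set up: since $H$ is not a $d$-star, Lemma \ref{1} tells us $H$ is factor-critical, and the observation recorded right after the definition of factor-critical graphs then gives $|V(H)| = 2\nu(H)+1$; meanwhile Lemma \ref{lem:notstar}(ii) gives $\nu(H) \geq d$ directly. Thus the only substantive point is the edge-count equality $|E(H)| = f_{\vartriangle}(d, \nu(H))$.

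To establish this equality I would argue by a component-exchange argument. The inequality $|E(H)| \leq f_{\vartriangle}(d,\nu(H))$ is free: $H$ is triangle-free with $\Delta(H) \leq \Delta(G) \leq d$ and matching number $\nu(H)$, so $H \in \mathbb{M}_{\vartriangle}(d,\nu(H))$ and hence cannot have more than the extremal number of edges for that class. For the reverse inequality, I would suppose toward a contradiction that $|E(H)| < f_{\vartriangle}(d,\nu(H))$, pick an edge-extremal graph $H'$ in $\mathbb{M}_{\vartriangle}(d,\nu(H))$ (so $|E(H')| = f_{\vartriangle}(d,\nu(H)) > |E(H)|$ and $\nu(H') \leq \nu(H)$), and form $G'$ from $G$ by deleting the component $H$ and taking the disjoint union with $H'$.

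The crux is then checking that this exchange keeps us inside $\mathbb{M}_{\vartriangle}(d,m)$ while strictly increasing the edge count, which is exactly where one must be slightly careful. Triangle-freeness and the degree bound $\Delta \leq d$ are preserved under disjoint union, and because both the matching number and the number of edges are additive over connected components, we obtain $\nu(G') = \nu(G) - \nu(H) + \nu(H') \leq \nu(G) \leq m$ and $|E(G')| = |E(G)| - |E(H)| + |E(H')| > |E(G)|$. Thus $G' \in \mathbb{M}_{\vartriangle}(d,m)$ has strictly more edges than $G$, contradicting the edge-extremality of $G$ (recall that $\mathcal{G}_{\vartriangle}(d,m)$ is by definition a subclass of the edge-extremal graphs in $\mathbb{M}_{\vartriangle}(d,m)$). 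This forces $|E(H)| = f_{\vartriangle}(d,\nu(H))$ and completes the verification of (ii). I do not anticipate a genuine difficulty here beyond this additivity bookkeeping: the whole corollary is essentially a repackaging of Lemmas \ref{1} and \ref{lem:notstar} together with the standard exchange principle, and the main thing to get right is that replacing a single component by a locally extremal one neither creates a triangle, nor violates the degree bound, nor pushes the matching number past $m$.
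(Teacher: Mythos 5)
Your proposal is correct and follows essentially the same route as the paper: factor-criticality from Lemma \ref{1} gives $|V(H)|=2\nu(H)+1$, Lemma \ref{lem:notstar} gives $\nu(H)\geq d$, and the edge-count equality is obtained by the same exchange argument, replacing $H$ by an edge-extremal member of $\mathbb{M}_{\vartriangle}(d,\nu(H))$ and contradicting the edge-extremality of $G$. Your additional bookkeeping (additivity of $\nu$ and $|E|$ over disjoint unions, preservation of triangle-freeness and the degree bound) just makes explicit what the paper leaves implicit.
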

	\begin{proof}
		Let $H$ be a connected component of $G$ that is not a $d$-star. First of all, we know $\nu(H)\geq d$ by Lemma \ref{lem:notstar}. Also, from Lemma \ref{1}, we know that $H$ is factor-critical, thus $|V(H)|=2\cdot \nu(H)+1$. Hence, we have $H\in \mathbb{M}_{\vartriangle}(d,\nu(H))$ since $\Delta(H)\leq d$, which implies $|E(H)|\leq f_{\vartriangle}(d,\nu(H))$. On the other hand, if $|E(H)|<f_{\vartriangle}(d,\nu(H))$, we would get $|E(G)|<|E(G_1)|$ by taking $G_1$ as the disjoint union of $G-H$ and $H_1$ for some $H_1\in\mathbb{M}_{\vartriangle}(d,\nu(H))$, which leads to a contradiction. As a result, we get $|E(H)|=f_{\vartriangle}(d,\nu(H))$. 
	\end{proof}


\section{Edge-extremal triangle-free graphs with $m> d$} \label{sec:d456}

\noindent We start this section with the trivial case $d=1$. Then, we will investigate a deeper study on the structure of extremal graphs to settle two cases with $m>d$, namely $Z(d)\leq m <2d$ for some function $Z(d)$ introduced in Definition \ref{def:minimum-order}, and $d\leq 6$.
\begin{theorem}\label{thm:case-d-1}
With the preceding notation, we have $f_{\vartriangle}(1,m)=m$ for all $m\geq 1$.
\end{theorem}
\begin{proof}
If $\Delta(G)=1$ for a graph $G$, then $G$ is the disjoint union of $\nu(G)$ edges, so the result follows.
\end{proof}

\noindent In the rest of this section, we assume $d\geq 2$. Our results will be based on the following key property. We will show that if $H$ is a connected component of a graph $G\in \mathcal{G}_{\vartriangle}(d,m)$ and if it is not a $d$-star then in addition to the assumption $\nu(H)\geq d$ given in Corollary \ref{cor:structure-not-star} ii), we can also bound $\nu(H)$ from above by $Z(d)$ (see Lemma \ref{lem:bound-5d-over-4}) where $Z(d)$ is defined below and described in Lemma \ref{lem:lower-bound-Z(d)}.  

\begin{definition}\label{def:minimum-order}
For any $d\geq2$, let $Z(d)$ be the smallest natural number $n$ such that there exists a $d$-regular (if $d$ is even) or almost $d$-regular (if $d$ is odd) triangle-free and factor-critical graph $G$ with $\nu(G)=n$.
\end{definition}

\noindent  Let us introduce the graph $B_d$ given in Figure \ref{fig:BLOW-UP}; it is a (almost) $d$-regular triangle-free and factor-critical graph, which shows the existence of $Z(d)$. The \textit{blow-up} of a graph is obtained by replacing every vertex with a finite collection of copies so that the copies of two vertices are adjacent if and only if the originals are. In particular, the copies of the same vertex form an independent set in the blow-up graph. Let us emphasize some properties of the graph $B_d$ in Proposition \ref{prop:properties-B(d)}.

\begin{figure}[h!]
 \centering
		\includegraphics[width=0.7cm,angle=0,height=5.65cm]{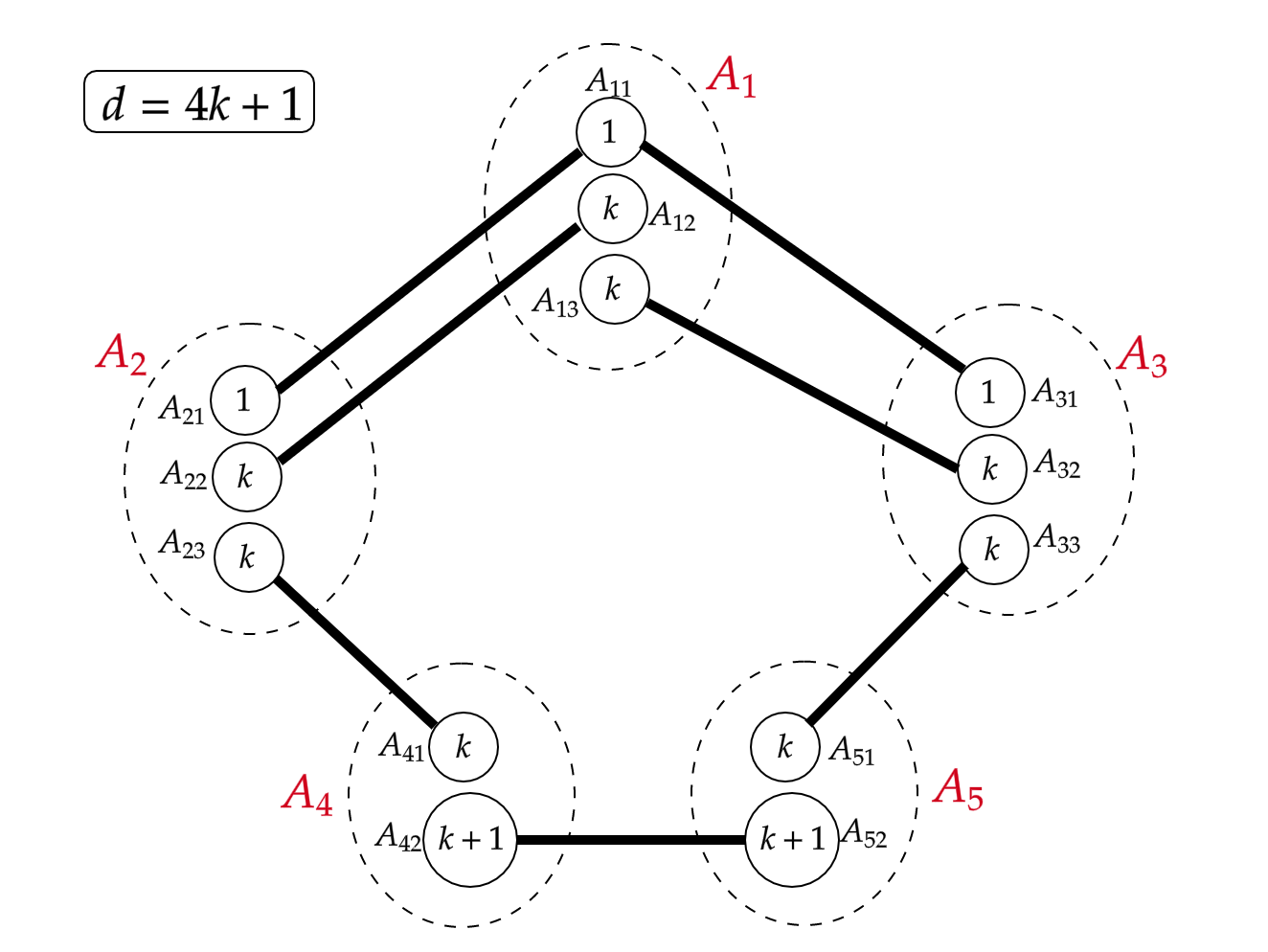}
		\hfill
		\includegraphics[width=0.7cm,angle=0,height=5.65cm]{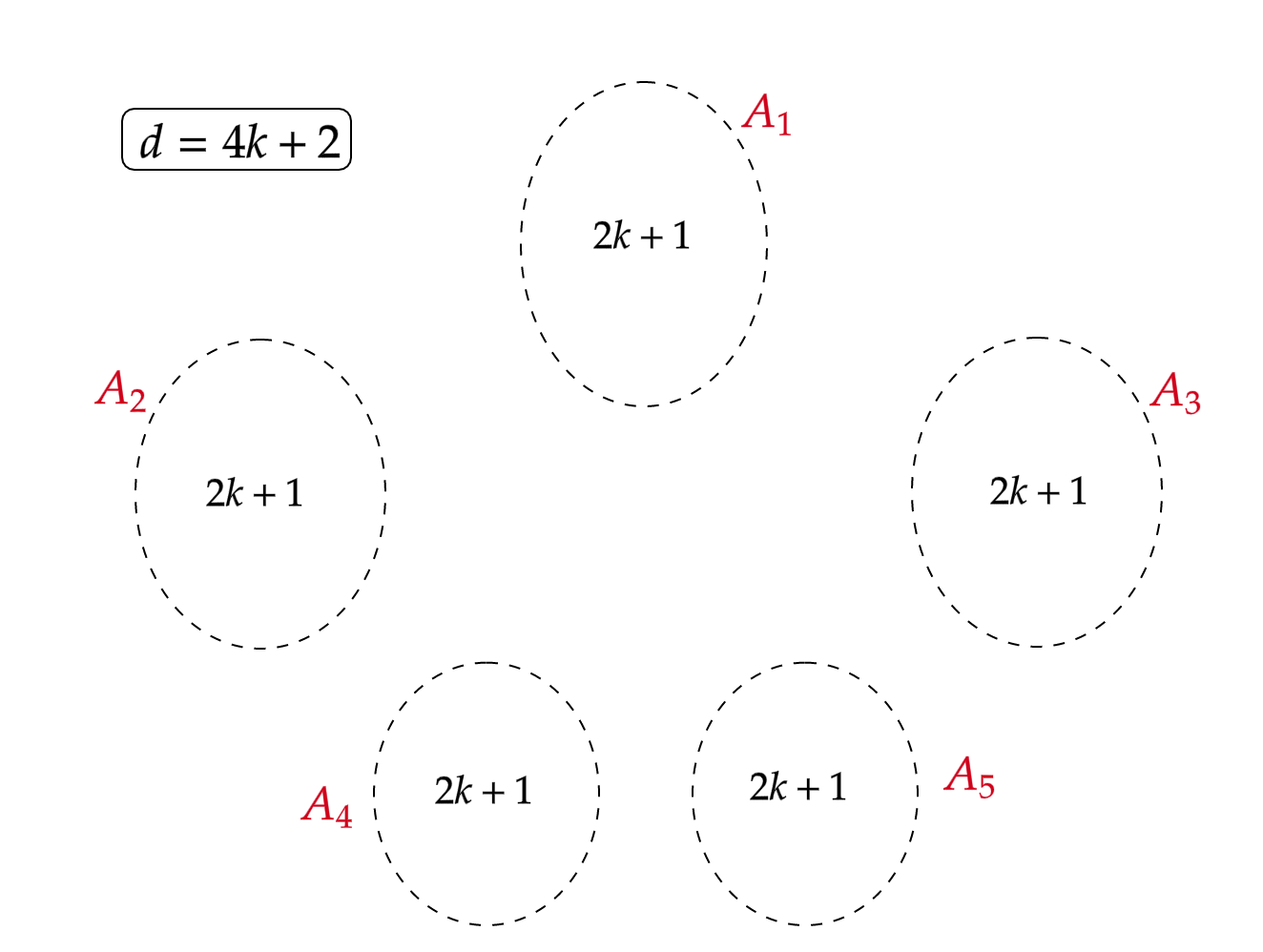}
	\hfill
		\includegraphics[width=0.7cm,angle=0,height=5.65cm]{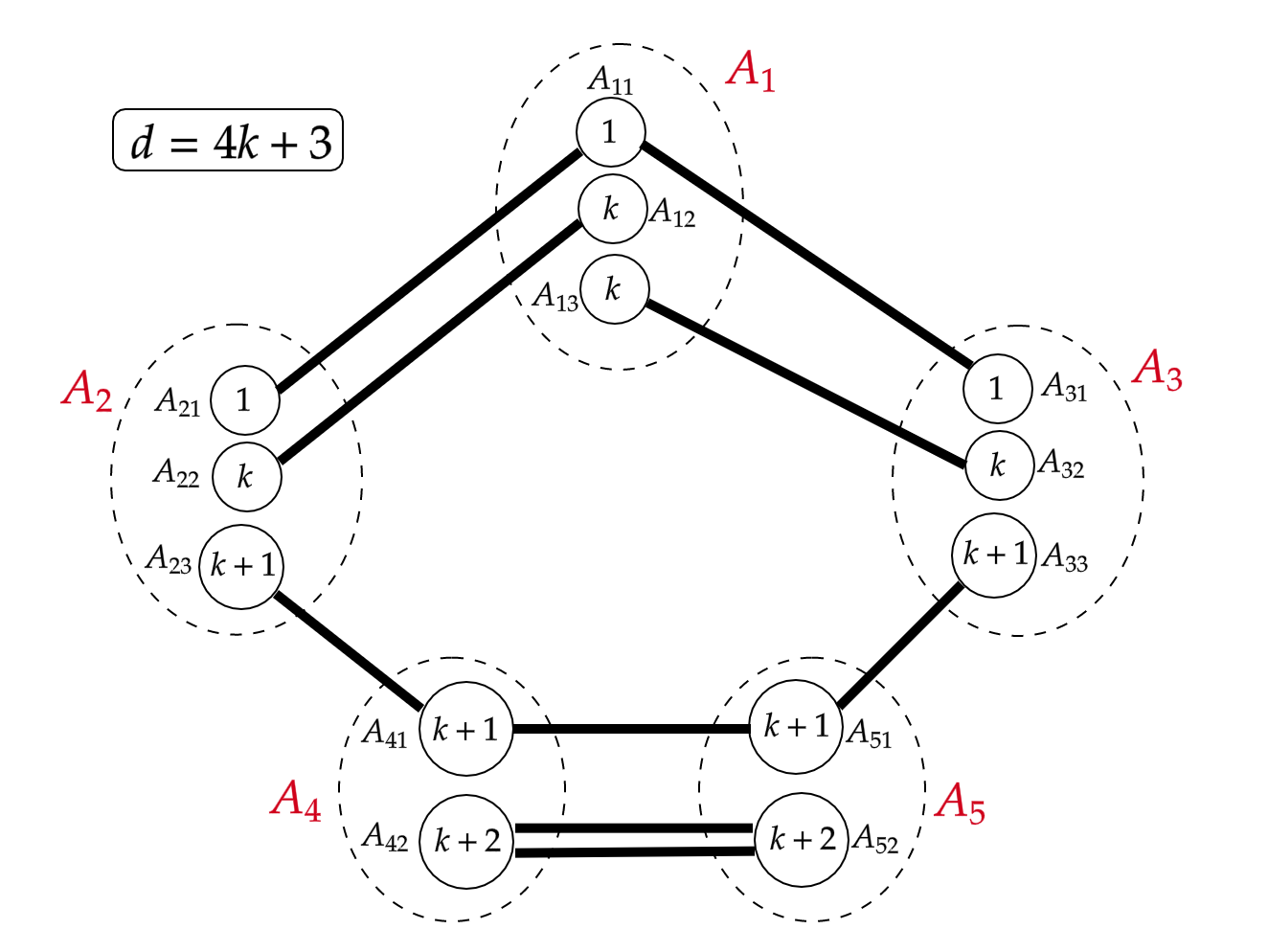}
		\hfill
		\includegraphics[width=0.7cm,angle=0,height=5.65cm]{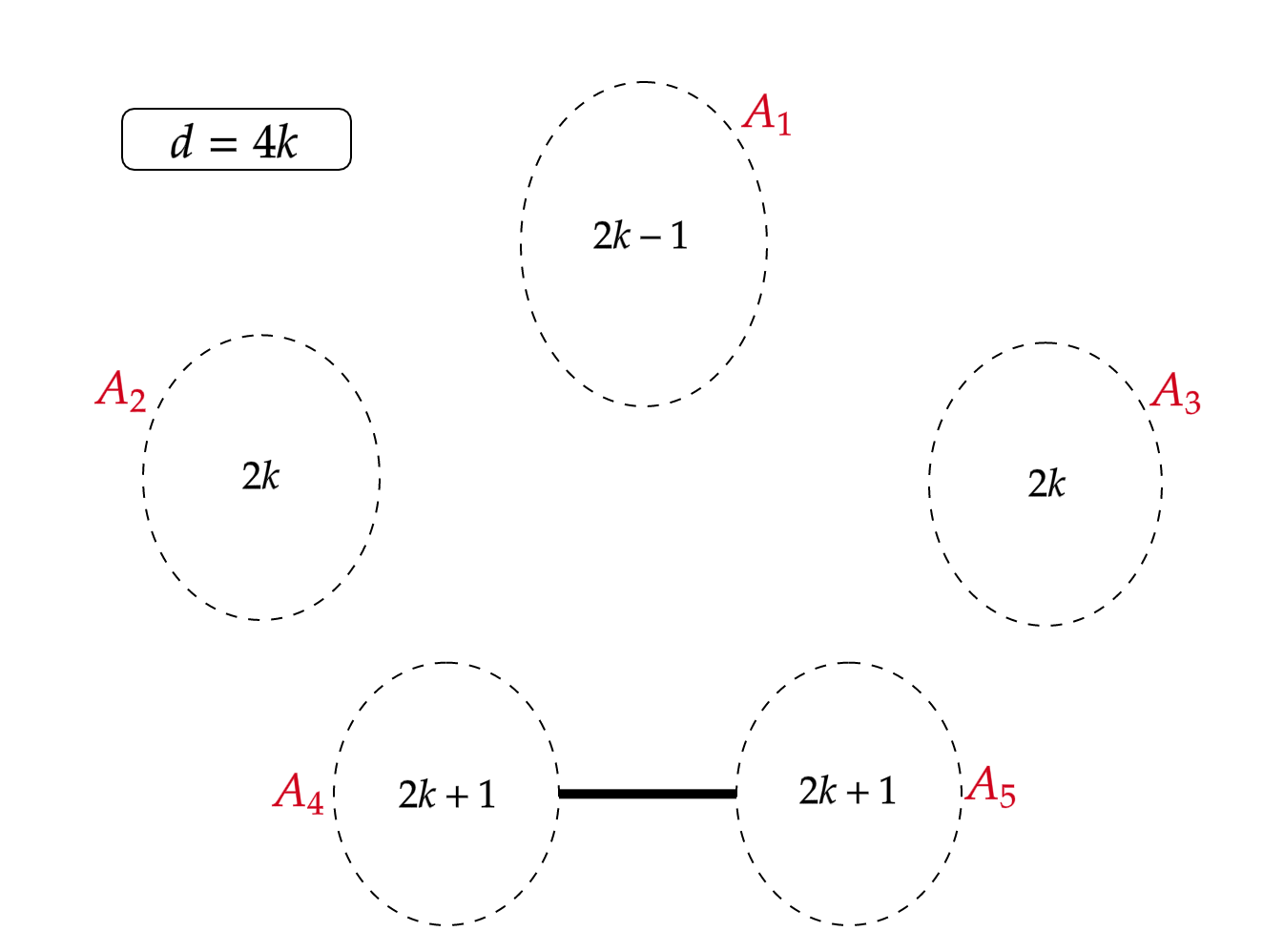}
	\caption{\small{ The graph $B_d$ for $d\geq2$ depending on $d \pmod{4}$. Each graph is obtained from a blow-up of a cycle of length 5 by removing some perfect matchings. For simplicity, edges of the blow-up graph are not shown although they are all present. The copies of the same vertex in the blow-up graph are divided into bags shown by dotted or continuous circles, each one containing as many copies as the number indicated in it.  The lines between different bags represent perfect matchings between the corresponding sets of vertices that are removed from the graph.} }
		\label{fig:BLOW-UP}
\end{figure}

\begin{proposition}\label{prop:properties-B(d)}
The graph $B_d$ in Figure \ref{fig:BLOW-UP} is (almost) $d$-regular, triangle-free, and factor-critical. Moreover, we have  $|V(B_d)|=2\nu(B_d)+1$ and $|E(B_d)|=d \nu(B_d)+\lfloor d/2\rfloor$ where $$\nu(B_d)=\begin{cases}
\lfloor5d/4\rfloor,&\text{ if }d\text{ is even,}\\
\lfloor5(d+1)/4\rfloor,&\text{ if }d\text{ is odd.}
\end{cases}$$  
\end{proposition}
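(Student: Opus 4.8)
The plan is to verify the five asserted properties of $B_d$ in the order that lets each feed the next, exploiting the fact that once factor-criticality is established, both the identity $|V(B_d)|=2\nu(B_d)+1$ and the edge count become almost automatic. Throughout I would write $B_d$ as the blow-up of the five-cycle $C_5$ with bags $A_1,\dots,A_5$ (indices read cyclically), the bag sizes and the removed matchings being read off from Figure \ref{fig:BLOW-UP} according to the residue of $d$ modulo $4$.

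First I would dispose of triangle-freeness, which can be argued uniformly across the four cases. Any blow-up of a triangle-free graph is triangle-free: three pairwise adjacent vertices of the blow-up must lie in three pairwise adjacent bags (two vertices in the same bag are non-adjacent), hence would project to a triangle of $C_5$, which does not exist. Since $B_d$ is obtained from this blow-up only by \emph{deleting} edges (the removed perfect matchings), it stays triangle-free. Next comes the degree condition, where I would split into the cases $d\equiv 0,1,2,3 \pmod 4$ and simply compute. In the plain blow-up, a vertex of $A_i$ has degree $|A_{i-1}|+|A_{i+1}|$; the bag sizes in Figure \ref{fig:BLOW-UP} are chosen so that this common value exceeds $d$ by exactly the number of matchings incident to $A_i$, and deleting those matchings lowers every degree to $d$ — except, when $d$ is odd, at the single vertex left uncovered by the parity mismatch of one matching, whose degree drops only to $d-1$. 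This bookkeeping is routine but must be carried out per residue class.

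The hard part will be factor-criticality, and I would spend the bulk of the argument there. The clean approach is to fix an arbitrary vertex $v$ and exhibit a perfect matching of $B_d\setminus v$ explicitly: since $B_d$ is an (almost) regular blow-up of $C_5$ minus matchings, I expect to assemble the near-perfect matching from a few ``rotation'' matchings following the cyclic pattern $A_1A_2,\,A_3A_4,\dots$ together with the surviving edges between consecutive bags, then correct the bag containing $v$ by a short augmenting-type adjustment. The $C_5$ structure should give enough slack, because every bag has two neighbouring bags available to absorb the deleted vertex; the delicate point, genuinely depending on $d\pmod 4$, is to confirm that the specific deleted matchings never obstruct completing the matching in the bag that loses a vertex. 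An alternative is to check connectivity of $B_d$ and that $\nu(B_d\setminus v)=\nu(B_d)$ for every $v$, and then invoke Gallai's Lemma (Lemma \ref{gallai}); either way the case analysis is unavoidable.

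Finally I would handle the counting, which by then is light. Factor-criticality gives $|V(B_d)|=2\nu(B_d)+1$ directly, so $\nu(B_d)=(|V(B_d)|-1)/2$; evaluating $\sum_i |A_i|$ in each residue class yields exactly $\lfloor 5d/4\rfloor$ for $d$ even and $\lfloor 5(d+1)/4\rfloor$ for $d$ odd. The edge count then needs no separate combinatorial argument: by the handshake lemma a $d$-regular graph on $2\nu+1$ vertices has $d\nu+d/2$ edges, while an almost $d$-regular one (a single vertex of degree $d-1$) has $(d(2\nu+1)-1)/2=d\nu+(d-1)/2$ edges, and in both parities this equals $d\nu(B_d)+\lfloor d/2\rfloor$, as claimed.
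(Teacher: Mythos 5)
Your skeleton matches the paper's proof — triangle-freeness via the blow-up observation, the degree check per residue class, factor-criticality as the core difficulty, and both counting claims then following from factor-criticality plus the handshake lemma — and the triangle-freeness and counting portions are complete and correct as you state them. The gap is the factor-criticality step, which is the only part of the proposition with real content. You correctly reduce it to producing a perfect matching of $B_d\setminus v$ for every vertex $v$, but you never produce one: ``I expect to assemble the near-perfect matching from a few rotation matchings\ldots then correct the bag containing $v$'' and ``the delicate point\ldots is to confirm that the specific deleted matchings never obstruct completing the matching'' are statements of what must be proven, not proofs. Since the removed matchings are exactly what makes $B_d$ (almost) $d$-regular, whether they obstruct the completion is precisely the question at stake; flagging it as delicate and moving on leaves the proposition unestablished. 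Your fallback via Gallai's Lemma is not genuinely an alternative either: verifying $\nu(B_d\setminus v)=\nu(B_d)$ for every $v$ amounts to exhibiting, for each $v$, a maximum matching that misses $v$ — the same matchings you did not construct.

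The paper closes this gap with a concrete device absent from your proposal. It first reduces the case analysis: $B_{4k+1}$ and $B_{4k+3}$ are spanning subgraphs of $B_{4k+2}$ and $B_{4k+4}$ respectively, so factor-criticality of the odd-$d$ graphs implies it for the even-$d$ ones; and by the reflection symmetry of the construction only $v\in A_1\cup A_2\cup A_4$ need be checked. Then, for each such $v$, it partitions the vertex set of $B_d\setminus v$ into pairs of unions of sub-bags $A_{ij}$ so that each pair induces a \emph{regular bipartite} graph; since every regular bipartite graph has a perfect matching, the union of these matchings is a perfect matching of $B_d\setminus v$. For instance, for $v\in A_{11}$ the pairs are $(A_{22},A_{13})$, $(A_{12},A_{32})$, $(A_{21}\cup A_{23},A_{42})$, $(A_{31}\cup A_{33},A_{52})$ and $(A_{41},A_{51})$. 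Some mechanism of this kind — or a fully worked-out version of your rotation-and-correction scheme, carried out against the actual bag sizes and removed matchings in each residue class — is required before the proposition can be considered proven.
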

\begin{proof}
Firstly, it can be easily checked that $B_d$ is $d$-regular when $d$ is even. For odd values of $d$, all the vertices except the vertex in $A_{11}$ have degree $d$, and the vertex in $A_{11}$ has degree $d-1$. Therefore, $B_d$ is almost $d$-regular when $d$ is odd. Moreover, each $B_d$ is a (partial) subgraph of a graph that is a blow-up of a cycle of length five, which implies that each $B_d$ is triangle-free. Therefore, we only need to show that $B_d$ is factor-critical. We note that  $B_{4k+1}$  and $B_{4k+3}$ can be obtained from $B_{4k+2}$ and $B_{4k+4}$, respectively, by deleting some edges. Thus, it suffices to show that $B_{4k+1}$ and $B_{4k+3}$ are factor-critical. For every vertex $v$ in $B_d$, we will show that $B_d-v$ has a perfect matching. Due to symmetry, it is enough to examine the cases $v\in A_1\cup A_2\cup A_4$. Since all the examinations are quite similar and straight-forward, we will only show the case $v\in A_1$ and leave the rest to the reader. It is well-known that any regular bipartite graph has a perfect matching. We will show that the vertices in $B_d-v$ can be partitioned into some pairs of subsets so that each pair induces a regular bipartite graph and thus admits a perfect matching. If $v\in A_{11}$, then we can partition the vertices into pairs of subsets as $(A_{22},A_{13})$, $(A_{12},A_{32})$, $(A_{21}\cup A_{23},A_{42})$, $(A_{31}\cup A_{33}, A_{52})$ and $(A_{41},A_{51})$. If $v\in A_{12}\cup A_{13}$, without loss of generality we can assume $v\in A_{12}$. Similarly, we can partition the vertices into pairs of subsets $(A_{11}\cup (A_{12}-v), A_{32})$, $(A_{13},A_{22})$, $(A_{21}\cup A_{23},A_{42})$, $(A_{31}\cup A_{33},A_{52})$ and $(A_{41},A_{51})$. Since $B_d$ is factor-critical, we have  $|V(B_d)|=2\nu(B_d)+1$, and a maximum matching saturates all vertices but one; expressing the number of vertices as a function of $k$ in each one of the four cases, it can be checked that we have $\nu(B_d)= \lfloor5d/4\rfloor$ if $d$ is even, and $\nu(B_d)=\lfloor5(d+1)/4\rfloor$ if $d$ is odd. Lastly,  $|E(B_d)|=d \nu(B_d)+\lfloor d/2\rfloor$ follows from the fact that $B_d$ is factor-critical and (almost) $d$-regular.
\end{proof}

\noindent For any $d\geq2$, let $C_d$ be a (almost) $d$-regular triangle-free factor-critical graph with matching number $Z(d)$. An important consequence of the properties of the graphs $B_d$ shown in Proposition \ref{prop:properties-B(d)} is the following:
\begin{corollary}\label{cor:exist}
For every $d\geq 2$, the value $Z(d)$ and a triangle-free factor-critical (almost) $d$-regular graph $C_d$ with matching number $Z(d)$ exist.
\end{corollary}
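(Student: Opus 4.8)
The plan is to derive this corollary directly from the explicit construction already carried out in Proposition \ref{prop:properties-B(d)}, so that essentially all the work has been done. The statement has two parts: that the quantity $Z(d)$ is well-defined, and that a graph $C_d$ realizing it exists. I would handle both at once by exhibiting a single witness and then invoking the well-ordering of $\N$.

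First I would recall Definition \ref{def:minimum-order}: $Z(d)$ is the \emph{smallest} natural number $n$ for which there exists a triangle-free, factor-critical graph $G$ with $\nu(G)=n$ that is $d$-regular (when $d$ is even) or almost $d$-regular (when $d$ is odd). To show $Z(d)$ exists, it suffices to show that the set of admissible values of $n$ is non-empty. This is exactly what Proposition \ref{prop:properties-B(d)} provides: the graph $B_d$ is triangle-free, factor-critical, and (almost) $d$-regular, with $\nu(B_d)=\lfloor 5d/4\rfloor$ for even $d$ and $\nu(B_d)=\lfloor 5(d+1)/4\rfloor$ for odd $d$. Hence $\nu(B_d)$ lies in the admissible set, so this set is a non-empty subset of $\N$ and therefore has a least element. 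This least element is, by definition, $Z(d)$; in particular $Z(d)\leq \nu(B_d)$.

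For the second part, once $Z(d)$ is known to exist as the minimum over the (non-empty) admissible set, the value $Z(d)$ is itself attained by some graph in that set. Any such minimizer has precisely the required properties—triangle-free, factor-critical, (almost) $d$-regular, with matching number $Z(d)$—simply because those are the defining conditions of the set over which we minimized. I would take this minimizer to be the graph $C_d$ introduced just before the corollary, which settles its existence.

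There is no genuine obstacle here, since the construction and verification of the defining properties of $B_d$ (the only nontrivial content) are assumed from Proposition \ref{prop:properties-B(d)}. The single point requiring care is that the witness $B_d$ matches the regularity type demanded in Definition \ref{def:minimum-order} on the same parity of $d$—$d$-regular for even $d$ and almost $d$-regular for odd $d$—which Proposition \ref{prop:properties-B(d)} confirms case by case; with that alignment in place, the corollary is immediate from the well-ordering principle.
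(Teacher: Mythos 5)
Your proposal is correct and matches the paper's intended argument exactly: the corollary is stated as an immediate consequence of Proposition \ref{prop:properties-B(d)}, with $B_d$ serving as the witness that the set of admissible matching numbers is non-empty, so the minimum $Z(d)$ exists by well-ordering and is realized by some graph $C_d$. Your only addition is spelling out this (trivial) well-ordering step, which the paper leaves implicit.
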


\noindent Now, we are ready to show that the matching number of each connected component of a graph $G\in \mathcal{G}_{\vartriangle}(d,m)$ is bounded above by $Z(d)$. Indeed, this additional information on the structure of connected components in an extremal graph will be very useful in both calculating $f_{\vartriangle}(d,m)$ in the rest of this section, and guiding us for future research to complete the remaining open cases.

\begin{lemma}\label{lem:bound-5d-over-4}
	Let $d$ and $m$ be natural numbers with $d\geq 2$, and let $G\in \mathcal{G}_{\vartriangle}(d,m)$. Then, for every connected component $H$ of $G$, we have $\nu(H)\leq Z(d)$. 
\end{lemma}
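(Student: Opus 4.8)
# Proof Proposal for Lemma \ref{lem:bound-5d-over-4}

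The plan is to argue by contradiction: I would assume that some connected component $H$ of $G$ has matching number $\nu(H) > Z(d)$ and then produce a graph in $\mathbb{M}_{\vartriangle}(d,m)$ with strictly more edges (or with the same number of edges but more $d$-star components), contradicting the defining property of $\mathcal{G}_{\vartriangle}(d,m)$. By Corollary \ref{cor:structure-not-star}, such an $H$ is factor-critical with $|V(H)| = 2\nu(H)+1$ and is itself edge-extremal in $\mathbb{M}_{\vartriangle}(d,\nu(H))$, and by Lemma \ref{lem:notstar} we have $\nu(H) \geq d$. The intuition is that once $\nu(H)$ exceeds the threshold $Z(d)$, the component $H$ is ``larger than necessary'': since a factor-critical (almost) $d$-regular triangle-free graph $C_d$ with matching number exactly $Z(d)$ exists by Corollary \ref{cor:exist}, a component with matching number beyond $Z(d)$ cannot be packing its edges as efficiently as a disjoint union of copies of $C_d$ together with $d$-stars.

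The key quantitative input is the edge bound from Proposition \ref{prop:properties-B(d)} (equivalently via $C_d$): a factor-critical (almost) $d$-regular triangle-free graph on $2\nu+1$ vertices has exactly $d\nu + \lfloor d/2\rfloor$ edges, since summing degrees gives $\sum_v d(v) = d(2\nu+1) - \varepsilon$ with $\varepsilon \in \{0,1\}$ recording whether $d$ is odd, so the edge count is $d\nu + \lfloor d/2\rfloor$. First I would establish that any connected triangle-free factor-critical $H$ with $\Delta(H)\leq d$ satisfies $|E(H)| \leq d\,\nu(H) + \lfloor d/2\rfloor$; this follows because $2|E(H)| = \sum_v d(v) \leq d|V(H)| = d(2\nu(H)+1)$, whence $|E(H)| \leq d\,\nu(H) + d/2$, and integrality sharpens this to $d\,\nu(H)+\lfloor d/2\rfloor$. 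Then I would compare the ``edge density per unit of matching number'': replacing $H$ by $\lfloor \nu(H)/Z(d)\rfloor$ copies of $C_d$ plus enough $d$-stars to absorb the leftover matching number keeps the matching number and maximum degree within bounds, while each copy of $C_d$ contributes $d\cdot Z(d) + \lfloor d/2\rfloor$ edges and each $d$-star contributes $d$ edges per unit matching. The surplus term $\lfloor d/2\rfloor$ is earned once per copy of $C_d$, and because $C_d$ realizes this surplus with the \emph{smallest} possible matching number $Z(d)$, partitioning the ``budget'' $\nu(H)$ into blocks of size $Z(d)$ strictly increases the number of surplus terms whenever $\nu(H) > Z(d)$.

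I expect the main obstacle to be handling the arithmetic of the replacement cleanly, especially the boundary case where $\nu(H)$ is not a multiple of $Z(d)$ and the leftover must be filled by $d$-stars (each of which loses the $\lfloor d/2\rfloor$ surplus), so I must verify the inequality $d\,\nu(H) + \lfloor d/2\rfloor \leq $ (edges of the replacement) holds strictly, or at least with a strict gain in star-components. Concretely, writing $\nu(H) = q\,Z(d) + r$ with $0 \leq r < Z(d)$ and $q \geq 1$, the replacement yields at least $q\bigl(d\,Z(d)+\lfloor d/2\rfloor\bigr) + dr = d\,\nu(H) + q\lfloor d/2\rfloor$ edges, which already matches or beats $|E(H)| \leq d\,\nu(H)+\lfloor d/2\rfloor$ since $q \geq 1$; when $q \geq 2$ the gain is strict, and when $q=1$ the edge count ties but the replacement has strictly more $d$-star components (from the $r > 0$ leftover, or from a separate argument when $r=0$), again contradicting the definition of $\mathcal{G}_{\vartriangle}(d,m)$. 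The delicate point to get right is the $q=1$, $r=0$ case, i.e. $Z(d) < \nu(H) < 2Z(d)$, where one genuinely needs the minimality of $Z(d)$ rather than a crude counting bound; here I would lean on the fact that $H$, being edge-extremal with $\nu(H) > Z(d)$, would force the existence of a factor-critical (almost) $d$-regular triangle-free graph of matching number smaller than $Z(d)$ if it were to beat the $C_d$-plus-stars construction, directly contradicting the minimality in Definition \ref{def:minimum-order}.
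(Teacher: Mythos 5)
Your proposal is correct and is essentially the paper's own argument: both rest on the bound $|E(H)|\le d\,\nu(H)+\lfloor d/2\rfloor$ (from factor-criticality, i.e.\ $|V(H)|=2\nu(H)+1$, combined with $\Delta(H)\le d$), both swap $H$ for a collection of $C_d$'s and $d$-stars of the same total matching number, and both then invoke the two-level extremality built into $\mathcal{G}_{\vartriangle}(d,m)$ (maximum edges, and among edge-extremal graphs, maximum number of $d$-star components). The only difference is bookkeeping: the paper replaces $H$ by a \emph{single} copy of $C_d$ together with $t=\nu(H)-Z(d)\ge 1$ stars, which gives exactly $d\,Z(d)+\lfloor d/2\rfloor+dt=d\,\nu(H)+\lfloor d/2\rfloor\ge |E(H)|$ edges and strictly more star components in one stroke, so no case analysis on $q=\lfloor \nu(H)/Z(d)\rfloor$ is needed at all.

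One correction to your final paragraph: the case you call delicate, $q=1$ and $r=0$, cannot occur under the contradiction hypothesis, since $\nu(H)>Z(d)$ together with $q=1$ forces $r=\nu(H)-Z(d)\ge 1$; moreover $q=1,\ r=0$ would mean $\nu(H)=Z(d)$, not $Z(d)<\nu(H)<2Z(d)$ as you write (that range is your $q=1,\ r>0$ case, which you already handle correctly via the extra star components). So the proposed ``separate argument'' leaning on the minimality of $Z(d)$ is unnecessary---and as sketched it does not really parse---but deleting that paragraph leaves a complete and correct proof.
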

\begin{proof}
	For a contradiction, let $G\in \mathcal{G}_{\vartriangle}(d,m)$ and $H$ be a connected component of $G$ with $\nu(H)=Z(d)+t$ for some $t\geq 1$. By Corollary \ref{cor:structure-not-star} (ii), we know that $|V(H)|=2\nu(H)+1$. Since $\Delta(H)\leq d$, we have $$|E(H)|\leq \lfloor(2\nu(H)+1)d/2\rfloor=\nu(H)d+\lfloor d/2 \rfloor.$$ On the other hand, let $G_1$ be the graph obtained by taking the disjoint union of $G-H$, the graph $C_d$, and $t$ many $d$-stars. Notice that $G_1$ has more $d$-stars than $G$, so we have $|E(G_1)|<|E(G)|$ by definition of $\mathcal{G}_{\vartriangle}(d,m)$. However, we can write
	\begin{eqnarray*}
		|E(G_1)| &=& |E(G-H)|+(d Z(d)+\lfloor d/2\rfloor)+dt\\
		&=&|E(G-H)|+\nu(H)d+\lfloor d/2\rfloor\\
		&\geq& |E(G-H)|+|E(H)|=|E(G)|,
	\end{eqnarray*}
	which is a contradiction. 
\end{proof}

We can use Lemma \ref{lem:factor-critical-triangle-free} to find the exact value of $Z(d)$ for small values of $d$. These values, on one hand, will allow us to show that $Z(d)\geq d$, thus we can address the case $Z(d)\leq m\leq 2d$ within the case $m>d$, on the other hand, they will be useful while solving the case $d\leq 6$. 

\begin{lemma}\label{lem:smaller-Z(d)-values}
We have $Z(d)=d$ for $d\in\{2,3\}$, and $Z(d)= d+1$ for $d\in\{4,5\}$. Moreover, $Z(d)\geq d+1$ holds for all $d\geq 4$.
\end{lemma}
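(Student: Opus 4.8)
# Proof Proposal for Lemma \ref{lem:smaller-Z(d)-values}

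The plan is to recall the definition of $Z(d)$ as the smallest matching number of a triangle-free, factor-critical, (almost) $d$-regular graph, and to attack the statement in two parts: first establish the lower bound $Z(d) \geq d+1$ for $d \geq 4$ using the size inequality from Lemma \ref{lem:factor-critical-triangle-free}, and then exhibit explicit small graphs realizing the claimed values for $d \in \{2,3,4,5\}$.

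For the lower bound when $d \geq 4$, I would argue by contradiction. Suppose $G$ is a triangle-free, factor-critical, (almost) $d$-regular graph with $\nu(G) \leq d$; combined with the trivial bound $\nu(G) \geq d$ that already governs non-star components (or directly, since a $d$-regular graph needs at least $d+1$ vertices), I would aim to pin $\nu(G) = d$, so $|V(G)| = 2d+1$ by factor-criticality. Since $G$ is (almost) $d$-regular on $2d+1$ vertices, its number of edges is $d\nu(G) + \lfloor d/2 \rfloor = d^2 + \lfloor d/2 \rfloor$. On the other hand, Lemma \ref{lem:factor-critical-triangle-free} forces $|E(G)| \leq 1 + \nu(G)^2 = 1 + d^2$. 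Comparing these, we need $d^2 + \lfloor d/2 \rfloor \leq d^2 + 1$, i.e. $\lfloor d/2 \rfloor \leq 1$, which fails precisely when $d \geq 4$. This contradiction yields $Z(d) \geq d+1$ for $d \geq 4$.

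For the exact values, I would split into the small cases. For $d \in \{2,3\}$, since $Z(d) \geq d$ always holds (as any such component has matching number at least $d$ by Corollary \ref{cor:structure-not-star}), it suffices to produce a witness with $\nu = d$: the graph $A_d$ from Figure \ref{fig:A_d} (the blow-up of $C_5$) is triangle-free, factor-critical, and can be arranged to be $2$-regular (namely $C_5$ itself for $d=2$) or $3$-regular (the Petersen-type blow-up for $d=3$) on $2d+1$ vertices, giving $Z(d) = d$. For $d \in \{4,5\}$, the lower bound just proved gives $Z(d) \geq d+1$, so I would invoke Proposition \ref{prop:properties-B(d)}, which supplies the graph $B_d$ with $\nu(B_d) = \lfloor 5d/4 \rfloor$ for even $d$ and $\lfloor 5(d+1)/4\rfloor$ for odd $d$; for $d=4$ this gives $\nu = 5 = d+1$ and for $d=5$ it gives $\nu = \lfloor 30/4 \rfloor = 7$, which is too large, so $B_5$ alone does not certify $Z(5) = 6$ — I would instead need an ad hoc construction of an almost $5$-regular triangle-free factor-critical graph on $13$ vertices with matching number $6$. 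Matching the lower bound $d+1$ then closes these cases.

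I expect the main obstacle to be the explicit small constructions, particularly for $d=5$, where the generic family $B_d$ overshoots the target value and a tailored $13$-vertex graph must be built by hand and verified to be factor-critical (checking that deleting any vertex leaves a perfect matching) and triangle-free. The lower-bound half is a clean counting argument and should go through smoothly; the delicate part is confirming that the optimal witnesses at the boundary cases genuinely achieve $d$ (for $d \in \{2,3\}$) and $d+1$ (for $d \in \{4,5\}$) rather than something larger, which requires careful case-by-case verification of the defining properties rather than a uniform argument.
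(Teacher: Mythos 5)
Your overall strategy---bounding $Z(d)$ from below by comparing the edge count of an (almost) $d$-regular factor-critical graph against Lemma \ref{lem:factor-critical-triangle-free}, then exhibiting explicit witnesses for $d\in\{2,3,4,5\}$---is the same as the paper's, and you correctly diagnose that $B_5$ overshoots (indeed $\nu(B_5)=7$), so a separate witness is needed for $d=5$. However, there are two genuine gaps. The most substantive one: you never produce the $d=5$ witness, only stating that one ``would need an ad hoc construction'' of an almost $5$-regular, triangle-free, factor-critical graph on $13$ vertices with matching number $6$. Without that graph the claim $Z(5)=6$ is not proved; only $Z(5)\geq 6$ is. This is not a routine verification to be deferred: the paper found such a graph ($M_5$, Figure \ref{fig:M}) by computer search and reports that it is the \emph{unique} triangle-free, factor-critical, almost $5$-regular graph with matching number $6$, so the witness is genuinely scarce and must actually be exhibited and checked.

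Second, your reduction to the case $\nu(G)=d$ in the lower-bound argument is unjustified. The bound $\nu(H)\geq d$ from Lemma \ref{lem:notstar} and Corollary \ref{cor:structure-not-star} applies to connected components of graphs in $\mathcal{G}_{\vartriangle}(d,m)$; a graph realizing $Z(d)$ is an arbitrary triangle-free, factor-critical, (almost) $d$-regular graph and need not occur as such a component, so you cannot invoke that corollary. Your alternative justification---that a $d$-regular graph has at least $d+1$ vertices---only yields $2\nu(G)+1\geq d+1$, i.e.\ $\nu(G)\geq d/2$, not $\nu(G)\geq d$. The fix (and the paper's route) is to drop the pinning step entirely: for any $n=\nu(G)$, (almost) $d$-regularity and factor-criticality give $|E(G)|=dn+\lfloor d/2\rfloor$, while Lemma \ref{lem:factor-critical-triangle-free} gives $|E(G)|\leq 1+n^2$, hence $\lfloor d/2\rfloor-1\leq n(n-d)$. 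The left side is at least $0$ for $d\geq 2$ and at least $1$ for $d\geq 4$, forcing $n\geq d$ in the first case and $n\geq d+1$ in the second, uniformly over all admissible $G$ (including those with $\nu(G)<d$, which your version does not cover). A further small slip: $A_3$ cannot be $3$-regular on $7$ vertices by parity; it is almost $3$-regular, which is what Definition \ref{def:minimum-order} requires for odd $d$.
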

\begin{proof}
By Lemma \ref{lem:factor-critical-triangle-free}, we have $|E(C_d)|\leq 1+Z(d)^2$ since $C_d$ is factor-critical and triangle-free. Since $C_d$ is (almost) $d$-regular, we get $$|E(C_d)|=\lfloor(2 Z(d)+1)d/2\rfloor=d Z(d)+\lfloor d/2\rfloor.$$ Hence, we obtain $\lfloor d/2\rfloor-1\leq Z(d) (Z(d)-d)$. Since $\lfloor d/2\rfloor-1\geq 0$ for all $d\geq2$, and $\lfloor d/2\rfloor-1\geq 1$ for all $d\geq 4$, we get $Z(d)\geq d$ for all $d\geq2$ and $Z(d)\geq d+1$ for all $d\geq 4$. By Proposition \ref{prop:properties-B(d)}, $B_2$ and $B_4$ are factor-critical and triangle-free graphs with $\nu(B_2)=2$ and $\nu(B_4)=5$, respectively. Also, $B_2$ is $2$-regular and $B_4$ is $4$-regular. Therefore, we get $Z(2)=2$ and $Z(4)=5$. Besides, $A_3$ (see Figure \ref{fig:A_d}) is an almost $3$-regular triangle-free and factor-critical graph with $\nu(A_3)=3$, which shows $Z(3)=3$. Finally, we identified using a computer search the graph $M_5$ given in Figure \ref{fig:M} as the unique triangle-free graph which is both factor-critical with $\nu(M_5)=6$ and almost $5$-regular; this shows $Z(5)=6$. 
\end{proof}

\begin{figure}[h!]
    \centering
	\includegraphics[scale=0.4]{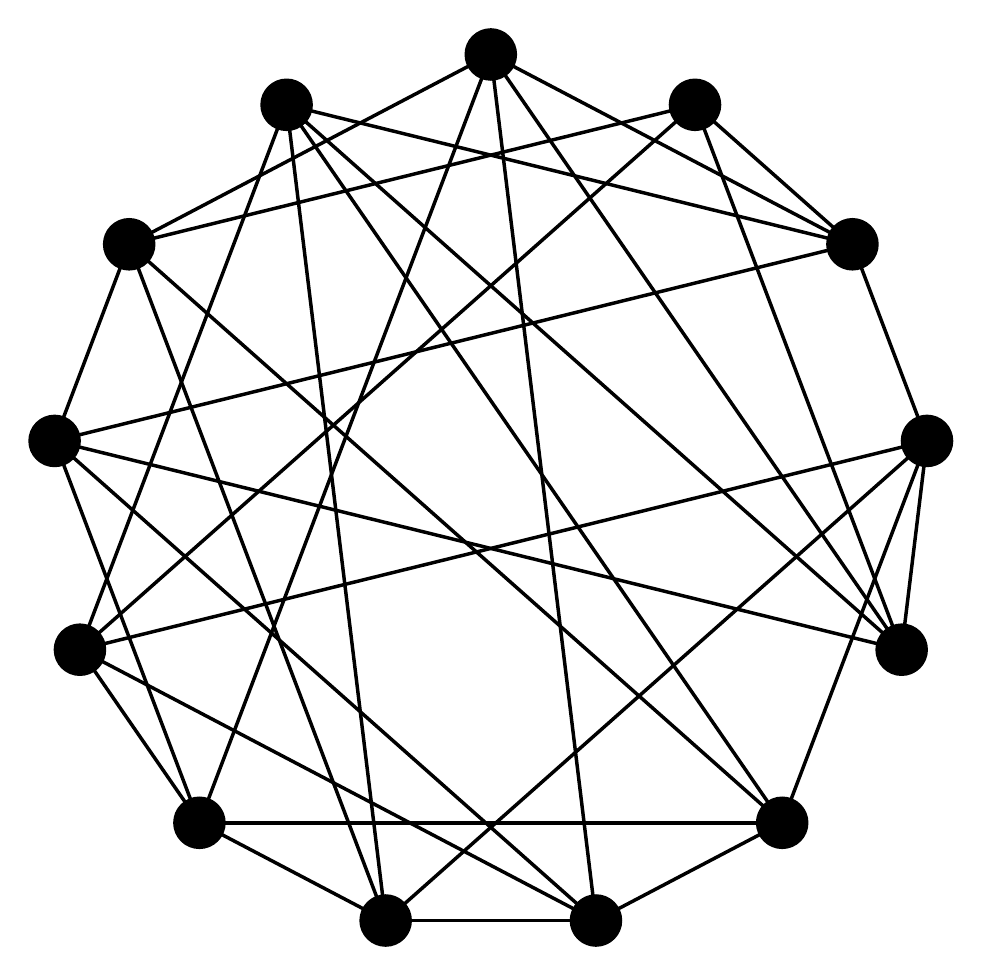}
   \caption{\small{The graph $M_5$.}}\label{fig:M}
      
\end{figure}

\noindent As for larger $d$, Corollary \ref{key} allows us to obtain the exact value of $Z(d)$ for even values of $d$, and to identify a very restricted interval for $Z(d)$ if $d$ is odd.

\begin{lemma}\label{lem:lower-bound-Z(d)}
For $d\geq 2$, if $d$ is even then we have $Z(d)=\lfloor5d/4\rfloor$; if $d$ is odd then we have $\lfloor5(d-1)/4\rfloor \leq Z(d)\leq  \lfloor5(d+1)/4\rfloor $.
\end{lemma}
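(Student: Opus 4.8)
The plan is to prove the two directions separately: the upper bound is immediate from the explicit construction, and the lower bound carries all the content, coming from Corollary \ref{key} applied to an extremal graph realizing $Z(d)$. For the upper bound, recall from Proposition \ref{prop:properties-B(d)} that $B_d$ is (almost) $d$-regular, triangle-free and factor-critical with $\nu(B_d)=\lfloor 5d/4\rfloor$ when $d$ is even and $\nu(B_d)=\lfloor 5(d+1)/4\rfloor$ when $d$ is odd. Since by Definition \ref{def:minimum-order} the quantity $Z(d)$ is the smallest matching number among all such graphs, $B_d$ witnesses $Z(d)\leq\nu(B_d)$, which is exactly the claimed upper bound $\lfloor 5d/4\rfloor$ in the even case and $\lfloor 5(d+1)/4\rfloor$ in the odd case.

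For the lower bound I would start from a graph $C_d$ that is (almost) $d$-regular, triangle-free and factor-critical with $\nu(C_d)=Z(d)$, which exists by Corollary \ref{cor:exist}. The key step is to invoke the contrapositive of Corollary \ref{key}: a triangle-free graph that is not bipartite must have minimum degree at most $2n/5$. Since $C_d$ is factor-critical it is non-bipartite and has order $n=2Z(d)+1$, so this applies. In the even case $C_d$ is $d$-regular, hence $d=\delta(C_d)\leq 2(2Z(d)+1)/5$, which rearranges to $Z(d)\geq(5d-2)/4$; since $Z(d)$ is an integer, this forces $Z(d)\geq\lceil(5d-2)/4\rceil=\lfloor 5d/4\rfloor$, matching the upper bound and giving equality. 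In the odd case $C_d$ is only almost $d$-regular, so $\delta(C_d)=d-1$, and the same inequality $d-1\leq 2(2Z(d)+1)/5$ yields $Z(d)\geq(5d-7)/4$, whence $Z(d)\geq\lceil(5d-7)/4\rceil=\lfloor 5(d-1)/4\rfloor$, which is the lower end of the claimed interval.

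The conceptual crux is this one application of Corollary \ref{key}; everything else is bookkeeping. The only fiddly point is verifying the two rounding identities $\lceil(5d-2)/4\rceil=\lfloor 5d/4\rfloor$ (even) and $\lceil(5d-7)/4\rceil=\lfloor 5(d-1)/4\rfloor$ (odd), which I would check by splitting on the residue of $d$ modulo $4$; I expect no surprises there. The genuine limitation, rather than an obstacle in the proof, is structural and explains why the odd case gives only an interval: almost-regularity costs exactly one in the minimum degree, loosening the hypothesis of Corollary \ref{key} by the maximal amount and leaving a unit gap between the lower bound from $C_d$ and the upper bound from $B_d$. Pinning down $Z(d)$ exactly for odd $d$ would require either a sharper lower bound or a better construction, which is presumably why certain small odd cases are resolved by computer search in Lemma \ref{lem:smaller-Z(d)-values}.
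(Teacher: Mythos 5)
Your proof is correct and takes essentially the same approach as the paper: the upper bound via the graph $B_d$ from Proposition \ref{prop:properties-B(d)}, and the lower bound by applying Corollary \ref{key} to the non-bipartite factor-critical graph $C_d$ of order $2Z(d)+1$ (the paper handles both parities at once by writing $\delta(C_d)=2\lfloor d/2\rfloor$). Your explicit check of the rounding identities merely spells out what the paper leaves implicit, and it is correct.
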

\begin{proof}
Since factor-critical graphs are non-bipartite, $|V(C_d)|=2 Z(d)+1$ and $\delta(C_d)=2\lfloor d/2\rfloor$, we get $2\lfloor d/2\rfloor\leq \dfrac{2(2 Z(d)+1)}{5}$ by Corollary \ref{key}, which gives $Z(d)\geq \lfloor5d/4\rfloor$ when $d$ is even and $Z(d)\geq \lfloor5(d-1)/4\rfloor$ when $d$ is odd. On the other hand, we have $\nu(B_d)=\lfloor 5d/4\rfloor$ when $d$ is even and $\nu(B_d)=\lfloor 5(d+1)/4\rfloor$ when $d$ is odd. Since $Z(d)\leq \nu(B_d)$, the result follows.
\end{proof}

\noindent Now, by Lemmas \ref{lem:smaller-Z(d)-values} and \ref{lem:lower-bound-Z(d)}, it is clear that $d\leq Z(d) <2d$ for any $d\geq 2$. Now we have the necessary ingredients to give the exact value of $f_{\vartriangle}(d,m)$ for $Z(d) \leq m < 2d$.

\begin{theorem}\label{thm:5d-over-4-case}
With the preceding notation, for $d\geq 2$ and $Z(d)\leq m <2d$, we have $f_{\vartriangle}(d,m)=d  m+\lfloor d/2\rfloor$.
\end{theorem}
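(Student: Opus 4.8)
The plan is to prove the two inequalities $f_{\vartriangle}(d,m)\geq dm+\lfloor d/2\rfloor$ and $f_{\vartriangle}(d,m)\leq dm+\lfloor d/2\rfloor$ separately. For the lower bound, I would exhibit an explicit graph in $\mathbb{M}_{\vartriangle}(d,m)$ with the desired number of edges. The natural candidate is the disjoint union of the graph $C_d$ (the triangle-free, factor-critical, (almost) $d$-regular graph with matching number $Z(d)$ guaranteed by Corollary~\ref{cor:exist}) together with $m-Z(d)$ copies of a $d$-star. Since $Z(d)\leq m<2d$, we have $0\leq m-Z(d)$, so this is well-defined; its matching number is $Z(d)+(m-Z(d))=m$ and its maximum degree is $d$, so it lies in $\mathbb{M}_{\vartriangle}(d,m)$. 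Using $|E(C_d)|=dZ(d)+\lfloor d/2\rfloor$ (from Proposition~\ref{prop:properties-B(d)}, applied to $C_d$) together with $d$ edges per star, the total edge count is $dZ(d)+\lfloor d/2\rfloor+d(m-Z(d))=dm+\lfloor d/2\rfloor$, establishing the lower bound.

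For the upper bound, I would take an edge-extremal graph $G\in\mathcal{G}_{\vartriangle}(d,m)$ and analyze its components using the structural results already established. By Corollary~\ref{cor:structure-not-star}, each component $H$ of $G$ is either a $d$-star or satisfies $|V(H)|=2\nu(H)+1$ with $\nu(H)\geq d$. The key new input is Lemma~\ref{lem:bound-5d-over-4}, which forces $\nu(H)\leq Z(d)$ for every component. Combined with the range $Z(d)\leq m<2d$ and the fact (just noted before the statement) that $Z(d)\geq d$, any non-star component $H$ must have $d\leq\nu(H)\leq Z(d)\leq m<2d$. The crucial observation is that $G$ can contain \emph{at most one} non-star component: if there were two such components $H_1,H_2$, then $\nu(H_1)+\nu(H_2)\geq 2d>m$ (since each has matching number at least $d$ and $m<2d$), contradicting $\nu(G)\leq m$. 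So $G$ consists of exactly one non-star component $H$ (if any) plus some $d$-stars.

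Writing $G$ as the single non-star component $H$ together with $k$ copies of a $d$-star, we have $\nu(G)=\nu(H)+k\leq m$ and $|E(G)|=|E(H)|+dk$. Since $H$ is factor-critical with $\Delta(H)\leq d$, we bound $|E(H)|\leq\lfloor(2\nu(H)+1)d/2\rfloor=d\nu(H)+\lfloor d/2\rfloor$, exactly as in the proof of Lemma~\ref{lem:bound-5d-over-4}. Hence $|E(G)|\leq d\nu(H)+\lfloor d/2\rfloor+dk=d(\nu(H)+k)+\lfloor d/2\rfloor\leq dm+\lfloor d/2\rfloor$, giving the upper bound. (The all-stars case $|E(G)|=dm$ is dominated by this.) The main obstacle I anticipate is handling the boundary carefully: I must confirm that $Z(d)\leq m$ is genuinely needed to realize the lower-bound construction, and that the ``at most one non-star component'' argument uses $m<2d$ in the right place, rather than an accidental off-by-one when $\nu(H)=Z(d)$ and $m-Z(d)$ stars are appended. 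Once these endpoints are checked, the two bounds coincide and yield $f_{\vartriangle}(d,m)=dm+\lfloor d/2\rfloor$.
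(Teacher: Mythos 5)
Your proposal is correct and follows essentially the same route as the paper: the lower bound via the disjoint union of $C_d$ with $m-Z(d)$ copies of a $d$-star, and the upper bound by showing an extremal $G\in\mathcal{G}_{\vartriangle}(d,m)$ has at most one non-star component (since two would force $\nu(G)\geq 2d>m$) and then bounding that component by $|E(H)|\leq d\nu(H)+\lfloor d/2\rfloor$ using factor-criticality and $\Delta(H)\leq d$. The only cosmetic differences are that you handle the all-stars case directly rather than by contradiction with the lower bound, and your appeal to Lemma~\ref{lem:bound-5d-over-4} is superfluous here.
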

\begin{proof}
Let $T$ be the disjoint union of $C_d$ and $m-Z(d)$ many $d$-stars. Clearly we have $\Delta(T)=d$, $\nu(T)=m$ and $$|E(T)|=d Z(d)+\lfloor d/2\rfloor+d(m-Z(d))=dm+\lfloor d/2\rfloor,$$ which shows $f_{\vartriangle}(d,m)\geq dm+\lfloor d/2\rfloor$. Then, let us take $G\in \mathcal{G}_{\vartriangle}(d,Z(d))$. Hence, we have $|E(G)|=f_{\vartriangle}(d,Z(d))\geq dm+\lfloor d/2\rfloor$, so it suffices to show that $|E(G)|\leq dm +\lfloor d/2\rfloor$. Assume $G$ has at least two connected components $H_1$ and $H_2$ that are not $d$-stars. Then, by part (ii) of Corollary \ref{cor:structure-not-star}, we get $2d \leq \nu(H_1)+\nu(H_2)\leq m$, however $m<2d$ by assumption, which is a contradiction. Moreover, if all the connected components of $G$ are $d$-stars, then we would get $|E(G)|=dm$, which contradicts with $|E(G)|\geq dm+\lfloor d/2\rfloor$. Therefore, $G$ has exactly one connected component that is not a $d$-star. Suppose $G$ has $t$ many connected components that are $d$-stars, and let $H$ be the connected component of $G$ that is not a $d$-star. Again, by part (ii) of Corollary \ref{cor:structure-not-star}, we know $|V(H)|=2\nu(H)+1$. On the other hand, since $\Delta(H)\leq d$, we have $$|E(H)|\leq \lfloor(2\nu(H)+1)d/2\rfloor=\nu(H)d+\lfloor d/2\rfloor.$$ Hence, by using $m=t+\nu(H)$, we get $$|E(G)|\leq dt+(\nu(H)d+\lfloor d/2\rfloor)=dm+\lfloor d/2\rfloor,$$ which completes the proof.
\end{proof}


In the sequel, we will reformulate our problem in a slightly different way to calculate $f_{\vartriangle}(d,m)$ for the case where $m>d$ and $d\leq 6$. This reformulation will be revisited in Section \ref{sec:discussion} to suggest an integer programming formulation and discuss future research directions for the remaining open cases in Section \ref{sec:discussion}.

Let us take a graph $G\in\mathcal{G}_{\vartriangle}(d,m)$ for some natural numbers $1\leq d\leq m$. For any connected component of $G$ that is not a $d$-star, say $H$, we know $d\leq \nu(H)\leq Z(d)$ and $|E(H)|=f_{\vartriangle}(d,\nu(H))$ by Corollary \ref{cor:structure-not-star} and Lemma \ref{lem:bound-5d-over-4}. Then, let $x_i$ be the number of connected components of $G$ whose matching number is $i$ where $d\leq i\leq Z(d)$. Clearly, we have $\displaystyle{\sum_{i=d}^{Z(d)}ix_i\leq m}$, and $G$ has $\displaystyle{m-\sum_{i=d}^{Z(d)}ix_i}$ many connected components that are $d$-stars. Therefore, we can write the number of edges in $G$ in terms of $x_i$'s as follows:
\begin{eqnarray*}
f_{\vartriangle}(d,m)=|E(G)|&=&d\Big(m-\sum_{i=d}^{Z(d)}ix_i\Big)+\sum_{i=d}^{Z(d)}f_{\vartriangle}(d,i)x_i\\
&=&dm-\sum_{i=d}^{Z(d)}dix_i+\sum_{i=d}^{Z(d)}f_{\vartriangle}(d,i)x_i\\
&=&dm+\sum_{i=d}^{Z(d)}(f_{\vartriangle}(d,i)-di)x_i.
\end{eqnarray*}

As a result, for a fixed $d$, we can determine the value of $f_{\vartriangle}(d,m)$ for all natural numbers $m$ by finding the values of $f_{\vartriangle}(d,i)$ and corresponding $x_i$ values only for $d\leq i\leq Z(d)$. For a simpler notation, let us define
\begin{eqnarray*}
\mathcal{F}(d,m)&:=&\Big\{\big(x_d,x_{d+1},\ldots,x_{Z(d)}\big):x_i\in\mathbb{Z}_{\geq 0}\text{ for }d\leq i\leq Z(d),\sum_{i=d}^{Z(d)}ix_i\leq m\Big\},\\
g_{\vartriangle}(d,i)&:=&f_{\vartriangle}(d,i)-di\text{ for any }i.
\end{eqnarray*}

Observe that we have 
$g_{\vartriangle}(d,d)=1$ for $d\geq 2$ by Theorem \ref{thm:case-d-d}. Also, we get $g_{\vartriangle}(d,Z(d))=\lfloor d/2\rfloor$ for $d\geq 2$ by Theorem \ref{thm:5d-over-4-case}. Now, we state the discussion above as a lemma since it will be used in the calculations for the cases $2\leq d\leq 6$.

\begin{lemma}\label{lem:MAIN-RECURSION}
For all natural numbers $1\leq d\leq m$, we have $$g_{\vartriangle}(d,m)=\max_{(x_d,\ldots,x_{Z(d)})\in\mathcal{F}(d,m)}\sum_{i=d}^{Z(d)}g_{\vartriangle}(d,i)x_i.$$
\end{lemma}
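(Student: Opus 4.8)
The plan is to establish the equality by proving the two inequalities separately, since both are essentially encoded in the structural results already in hand. As a preliminary remark, note that $\mathcal{F}(d,m)$ is a finite set: each feasible vector satisfies $ix_i\le m$ with $i\ge d\ge 1$, so $x_i\le m/d$ for every $i$. Hence the maximum on the right-hand side is attained and all the sums below are finite.

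For the inequality $g_{\vartriangle}(d,m)\le\max$, I would fix a graph $G\in\mathcal{G}_{\vartriangle}(d,m)$, which is nonempty since edge-extremal graphs exist. First I observe that $\nu(G)=m$: if $\nu(G)<m$, then adjoining one more $d$-star to $G$ produces a graph still lying in $\mathbb{M}_{\vartriangle}(d,m)$ but with $d\ge1$ additional edges, contradicting edge-extremality. Next, for every connected component $H$ of $G$ that is not a $d$-star, Corollary \ref{cor:structure-not-star} gives $|E(H)|=f_{\vartriangle}(d,\nu(H))$ together with $\nu(H)\ge d$, while Lemma \ref{lem:bound-5d-over-4} gives $\nu(H)\le Z(d)$; thus every non-star component has matching number in $\{d,\dots,Z(d)\}$. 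Letting $x_i$ be the number of non-star components of $G$ with matching number $i$, the identity $\nu(G)=m$ forces the number of $d$-star components to equal exactly $m-\sum_{i=d}^{Z(d)}ix_i\ge 0$, so $(x_d,\dots,x_{Z(d)})\in\mathcal{F}(d,m)$. Counting edges component by component (each $d$-star contributing $d$, each non-star component with matching number $i$ contributing $f_{\vartriangle}(d,i)$) reproduces exactly the computation preceding the lemma and yields $g_{\vartriangle}(d,m)=|E(G)|-dm=\sum_{i=d}^{Z(d)}g_{\vartriangle}(d,i)x_i$, which is at most the maximum.

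For the reverse inequality $g_{\vartriangle}(d,m)\ge\max$, I would take an arbitrary $(x_d,\dots,x_{Z(d)})\in\mathcal{F}(d,m)$ and construct a witness. For each $i$ with $d\le i\le Z(d)$ choose an edge-extremal graph $H_i\in\mathbb{M}_{\vartriangle}(d,i)$, so that $\Delta(H_i)\le d$, $\nu(H_i)\le i$, $H_i$ is triangle-free, and $|E(H_i)|=f_{\vartriangle}(d,i)$. Let $G^{*}$ be the disjoint union of $x_i$ copies of each $H_i$ together with $m-\sum_{i=d}^{Z(d)}ix_i$ copies of the $d$-star. Then $G^{*}$ is triangle-free with $\Delta(G^{*})\le d$, and its matching number is $\sum_i x_i\nu(H_i)+\bigl(m-\sum_i ix_i\bigr)\le m$, so $G^{*}\in\mathbb{M}_{\vartriangle}(d,m)$. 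Computing $|E(G^{*})|=\sum_i x_i f_{\vartriangle}(d,i)+d\bigl(m-\sum_i ix_i\bigr)=dm+\sum_i g_{\vartriangle}(d,i)x_i$ and using $f_{\vartriangle}(d,m)\ge|E(G^{*})|$ gives $g_{\vartriangle}(d,m)\ge\sum_{i=d}^{Z(d)}g_{\vartriangle}(d,i)x_i$; maximizing over $\mathcal{F}(d,m)$ finishes this direction, and the two inequalities combine to the claimed equality.

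I expect no serious obstacle, as the genuine content has already been extracted in Corollary \ref{cor:structure-not-star} and Lemma \ref{lem:bound-5d-over-4}, which confine the matching numbers of all non-star components to the window $[d,Z(d)]$; the remainder is an assembly of these facts. The two delicate points to keep an eye on are the justification that $\nu(G)=m$ for the canonical extremal graph (needed so that the star components are counted \emph{exactly}, not merely bounded) and the realizability check in the reverse direction, where it is important that taking a disjoint union only adds matching numbers and so keeps the total within the bound $m$.
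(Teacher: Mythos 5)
Your proof is correct and follows essentially the same route as the paper: the discussion preceding the lemma decomposes a graph in $\mathcal{G}_{\vartriangle}(d,m)$ into its components, invokes Corollary \ref{cor:structure-not-star} and Lemma \ref{lem:bound-5d-over-4} to confine non-star components to matching numbers in $[d,Z(d)]$, and counts edges exactly as you do. The only difference is that you spell out two points the paper leaves implicit --- the justification that $\nu(G)=m$ (so the star components are counted exactly) and the converse inequality via the explicit disjoint-union construction --- which makes your write-up slightly more complete but not a different argument.
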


Notice that we have $Z(d)=d$ for $d\in\{2,3\}$ and $Z(d)=d+1$ for $d\in\{4,5,6\}$ by Lemmas \ref{lem:smaller-Z(d)-values} and \ref{lem:lower-bound-Z(d)}. Therefore, we have a simple expression for $\mathcal{F}(d,m)$ for $2\leq d\leq 6$, which helps to find the exact value of $f_{\vartriangle}(d,m)$.
\begin{theorem}\label{thm:d123}
With the preceding notation, for $m>d$ and $d\in\{2,3\}$, $$f_{\vartriangle}(d,m)=dm+\lfloor m/d\rfloor \lfloor d/2\rfloor.$$
\end{theorem}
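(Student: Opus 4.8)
The plan is to read this off directly from the recursion in Lemma \ref{lem:MAIN-RECURSION}, exploiting the fact that for $d\in\{2,3\}$ the upper bound $Z(d)$ coincides with $d$, so the optimization collapses to a single variable. First I would invoke Lemmas \ref{lem:smaller-Z(d)-values} and \ref{lem:lower-bound-Z(d)} to record that $Z(2)=2$ and $Z(3)=3$, i.e. $Z(d)=d$ in both cases. Consequently the index range $d\le i\le Z(d)$ in the definition of $\mathcal{F}(d,m)$ contains only the single value $i=d$, and a tuple in $\mathcal{F}(d,m)$ is simply a nonnegative integer $x_d$ subject to the one constraint $d\,x_d\le m$.

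Next I would plug this into Lemma \ref{lem:MAIN-RECURSION}. Since $g_{\vartriangle}(d,d)=1$ for $d\ge 2$ (which is exactly the content of Theorem \ref{thm:case-d-d}, as $f_{\vartriangle}(d,d)=d^2+1$ gives $g_{\vartriangle}(d,d)=d^2+1-d^2=1$), the objective reduces to maximizing $x_d$ itself. The maximum of $x_d$ over nonnegative integers with $d\,x_d\le m$ is $\lfloor m/d\rfloor$, so
\begin{equation*}
g_{\vartriangle}(d,m)=\max_{d\,x_d\le m}\, g_{\vartriangle}(d,d)\,x_d=\lfloor m/d\rfloor .
\end{equation*}
Translating back through $g_{\vartriangle}(d,m)=f_{\vartriangle}(d,m)-dm$ then yields $f_{\vartriangle}(d,m)=dm+\lfloor m/d\rfloor$. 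Finally I would observe that $\lfloor d/2\rfloor=1$ for both $d=2$ and $d=3$, so $dm+\lfloor m/d\rfloor=dm+\lfloor m/d\rfloor\lfloor d/2\rfloor$, which is the claimed formula.

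Here there is essentially no obstacle left to overcome: all the difficulty has already been absorbed into the earlier structural results. The only thing to double-check is the boundary bookkeeping, namely that the degenerate case $Z(d)=d$ genuinely reduces $\mathcal{F}(d,m)$ to one coordinate and that the identity $\lfloor d/2\rfloor=1$ is what reconciles the compact expression $dm+\lfloor m/d\rfloor$ with the stated form $dm+\lfloor m/d\rfloor\lfloor d/2\rfloor$ (written this way so as to match the unified formula of Theorem \ref{thm:ALL-IN-ONE}). If one preferred a self-contained argument avoiding the recursion, the same answer follows by combining the extremal construction of Theorem \ref{thm:case-d-d}—taking $\lfloor m/d\rfloor$ disjoint copies of the graph $A_d$ together with the leftover $m-d\lfloor m/d\rfloor$ many $d$-stars—for the lower bound, with Corollary \ref{cor:structure-not-star} and Lemma \ref{lem:bound-5d-over-4} (forcing every non-star component to have matching number exactly $d$) for the matching upper bound.
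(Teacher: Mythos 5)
Your proposal is correct and follows essentially the same route as the paper: use $Z(d)=d$ (Lemma \ref{lem:smaller-Z(d)-values}) to collapse $\mathcal{F}(d,m)$ to the single variable $x_d\le\lfloor m/d\rfloor$ and apply Lemma \ref{lem:MAIN-RECURSION}. The only cosmetic difference is that you evaluate $g_{\vartriangle}(d,d)=1$ via Theorem \ref{thm:case-d-d} and then note $\lfloor d/2\rfloor=1$, whereas the paper cites $g_{\vartriangle}(d,d)=g_{\vartriangle}(d,Z(d))=\lfloor d/2\rfloor$ from Theorem \ref{thm:5d-over-4-case}; for $d\in\{2,3\}$ these coincide.
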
 
\begin{proof}
Since $d=Z(d)$ for  $d\in\{2,3\}$ by Lemma \ref{lem:smaller-Z(d)-values}, $\mathcal{F}(d,m)$ contains only $1$-dimensional elements, so we get $\mathcal{F}(d,m)=\{x_d\in\mathbb{Z}_{\geq 0}:dx_d\leq m\}=\{ x_d\in\mathbb{Z}_{\geq 0}:x_d\leq \lfloor m/d\rfloor\}.$ Then, we have $$g_{\vartriangle}(d,m)=\max_{x_d\in\mathcal{F}(d,m)}g_{\vartriangle}(d,d)x_d=g_{\vartriangle}(d,d) \lfloor m/d\rfloor$$ by Lemma \ref{lem:MAIN-RECURSION}. As we have already inferred from Theorem \ref{thm:5d-over-4-case} that $g_{\vartriangle}(d,d)=g_{\vartriangle}(d,Z(d))=\lfloor d/2\rfloor$, we get $$f_{\vartriangle}(d,m)-dm=g_{\vartriangle}(d,m)=\lfloor d/2\rfloor  \lfloor m/d\rfloor,$$ and the result follows.
\end{proof}

\begin{theorem}\label{thm:d4567}
With the preceding notation, for $m>d$ and $d\in\{4,5,6\}$ we have, $$f_{\vartriangle}(d,m)=\begin{cases}
	1+dm+\lfloor d/2\rfloor \lfloor m/(d+1)\rfloor,&\text{if }m+1\text{ is divisible by }d+1,\\
	dm+\lfloor d/2\rfloor \lfloor m/(d+1)\rfloor,&\text{otherwise}.\\
\end{cases}$$
\end{theorem}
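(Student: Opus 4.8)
The plan is to apply the recursion of Lemma~\ref{lem:MAIN-RECURSION} directly, exploiting the fact that for $d\in\{4,5,6\}$ we have $Z(d)=d+1$ (by Lemmas~\ref{lem:smaller-Z(d)-values} and \ref{lem:lower-bound-Z(d)}), so that each feasible point of $\mathcal{F}(d,m)$ is merely a pair $(x_d,x_{d+1})$ subject to $dx_d+(d+1)x_{d+1}\leq m$. Writing $q:=\lfloor d/2\rfloor$, I would first record the two coefficient values already in hand: $g_{\vartriangle}(d,d)=1$ for $d\geq 2$ (from Theorem~\ref{thm:case-d-d}) and $g_{\vartriangle}(d,d+1)=g_{\vartriangle}(d,Z(d))=q$ (from Theorem~\ref{thm:5d-over-4-case}). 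By Lemma~\ref{lem:MAIN-RECURSION}, the quantity $g_{\vartriangle}(d,m)=f_{\vartriangle}(d,m)-dm$ is then exactly the optimum of the two-variable integer program that maximizes $x_d+q\,x_{d+1}$ subject to $dx_d+(d+1)x_{d+1}\leq m$ over nonnegative integers, and it remains only to solve this small knapsack and translate back through $f_{\vartriangle}(d,m)=dm+g_{\vartriangle}(d,m)$.

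To solve the program I would eliminate $x_d$: for a fixed value $x_{d+1}=k$ (with $0\leq k\leq\lfloor m/(d+1)\rfloor$) the best choice is $x_d=\lfloor(m-(d+1)k)/d\rfloor$, so the objective becomes the single-variable function $V(k):=\lfloor(m-(d+1)k)/d\rfloor+qk$. The heart of the argument is to show that $V$ is nondecreasing in $k$, so that its maximum sits at $k^{*}=\lfloor m/(d+1)\rfloor$. Setting $a:=m-(d+1)k$ and using the identity $\lfloor(a-d-1)/d\rfloor=\lfloor(a-1)/d\rfloor-1$, the forward difference simplifies to $V(k+1)-V(k)=q-1$ when $a\not\equiv 0\pmod d$ and to $V(k+1)-V(k)=q-2$ when $a\equiv 0\pmod d$. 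Since $q=\lfloor d/2\rfloor\geq 2$ for every $d\geq 4$, both expressions are nonnegative, which gives the monotonicity and hence $g_{\vartriangle}(d,m)=V(k^{*})$.

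Evaluating $V(k^{*})$ closes the proof. Writing $\rho:=m-(d+1)\lfloor m/(d+1)\rfloor\in\{0,\dots,d\}$, we have $V(k^{*})=\lfloor\rho/d\rfloor+q\lfloor m/(d+1)\rfloor$, where $\lfloor\rho/d\rfloor$ equals $1$ exactly when $\rho=d$, i.e.\ when $(d+1)\mid(m+1)$, and equals $0$ otherwise. Substituting back through $f_{\vartriangle}(d,m)=dm+g_{\vartriangle}(d,m)$ with $q=\lfloor d/2\rfloor$ yields precisely the two-case formula in the statement.

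The step I expect to be the crux is the monotonicity of $V$, and specifically the clean use of $q\geq 2$: this is exactly where the hypothesis $d\in\{4,5,6\}$ enters, and it does so twice over. It guarantees both that $Z(d)=d+1$ (keeping the program two-dimensional) and that $\lfloor d/2\rfloor\geq 2$ (forcing the $(d+1)$-component to dominate, so that the greedy ``pack as many $(d+1)$-components as possible'' solution is optimal). For $d\geq 7$ the value $Z(d)$ strictly exceeds $d+1$, the program acquires additional variables, and this simple monotonicity no longer closes the argument, which is why those cases are deferred to the integer-programming discussion of Section~\ref{sec:discussion}.
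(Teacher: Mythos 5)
Your proof is correct and takes essentially the same route as the paper's: both use Lemma~\ref{lem:MAIN-RECURSION} with $Z(d)=d+1$ to reduce to the two-variable knapsack, eliminate $x_d$ by setting it to $\lfloor(m-(d+1)x_{d+1})/d\rfloor$, and use $\lfloor d/2\rfloor\geq 2$ (equivalently, the paper's $\lfloor(d-2)/2\rfloor\geq 1$) to show the resulting function is nondecreasing in $x_{d+1}$, so the optimum sits at $x_{d+1}=\lfloor m/(d+1)\rfloor$ and the remainder analysis yields the two-case formula. Only your closing aside that $Z(d)$ strictly exceeds $d+1$ for all $d\geq 7$ overstates what the paper establishes (e.g.\ $Z(7)=8$ is not ruled out); this plays no role in your argument for the theorem itself.
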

\begin{proof}
Let $d\in\{4,5,6\}$. Since $Z(d)=d+1$ by Lemmas \ref{lem:smaller-Z(d)-values} and \ref{lem:lower-bound-Z(d)}, $\mathcal{F}(d,m)$ contains 2-dimensional elements, and we get $$\mathcal{F}(d,m)=\{(x_d,x_{d+1})\in \mathbb{Z}_{\geq0}^2:x_d\leq (m-(d+1)x_{d+1})/d\}.$$ 
On the other hand,  we have $$g_{\vartriangle}(d,m)=\max_{(x_d,x_{d+1})\in\mathcal{F}(d,m)}g_{\vartriangle}(d,d)x_d+g_{\vartriangle}(d,d+1)x_{d+1}$$ by Lemma \ref{lem:MAIN-RECURSION}. Since $g(d,d)=1$ and $g(d,d+1)=g(d,Z(d))=\lfloor d/2\rfloor$, we can write
\begin{eqnarray*}
g_{\vartriangle}(d,m)&=&\max_{(x_d,x_{d+1})\in\mathcal{F}(d,m)}(x_d+\lfloor d/2\rfloor x_{d+1})\\
&=&\max_{0\leq x_{d+1}\leq  m/(d+1)} \lfloor(m-(d+1)x_{d+1})/d\rfloor+\lfloor d/2\rfloor x_{d+1}\\
&=&\max_{0\leq x_{d+1}\leq  m/(d+1)} \lfloor (m-x_{d+1})/d\rfloor + \lfloor (d-2)/2\rfloor x_{d+1}.
\end{eqnarray*}
Since $\lfloor (d-2)/2\rfloor\geq 1$, the quantity $\lfloor (m-x_{d+1})/d\rfloor + \lfloor (d-2)/2\rfloor x_{d+1}$ increases with respect to $x_{d+1}$. Therefore, $g_{\vartriangle}(d,m)$ is obtained by assigning $x_{d+1}=\lfloor m/(d+1)\rfloor$ which is the maximum possible value for $x_{d+1}$. Then, by writing $m=(d+1)k+r$ for some $k,r\in\mathbb{Z}$ where $k=\lfloor m/(d+1)\rfloor$ and $0\leq r\leq d$, we see that $(x_d,k)\in \mathcal{F}(d,m) $ implies $x_d\leq 1$ if $r=d$ and $x_d\leq 0$ otherwise as $x_d\leq (m-(d+1)x_{d+1})/d$. Note that $r=d$ is equivalent to the case that $m+1$ is divisible by $d+1$. Therefore, the value $g_{\vartriangle}(d,m)$ is attained at $x_d=1$, $x_{d+1}=k$ if $m+1$ is divisible by $d+1$, and it is attained at $x_d=0$, $x_{d+1}=k$ otherwise. As a result, we find
 $$g_{\vartriangle}(d,m)=\begin{cases}
1+\lfloor d/2\rfloor \lfloor m/(d+1)\rfloor,&\text{if }m+1\text{ is divisible by }d+1,\\
\lfloor d/2\rfloor \lfloor m/(d+1)\rfloor,&\text{otherwise},\\
\end{cases}$$ so the result follows.
\end{proof}

\section{Main Result}\label{sec:conc}

 We determined the value of $f_{\vartriangle}(d,m)$ for all the cases with $d\geq m$ (Theorems \ref{thm:case-d-greater-m} and \ref{thm:case-d-d}), and for the cases $d<m$ with either  $Z(d) \leq m <2d$ (Theorem \ref{thm:5d-over-4-case}) or $d\leq 6$ (Theorems \ref{thm:d123} and \ref{thm:d4567}). It is possible to summarize those findings in a single formula that we state as our main result. Recall that $C_d$ is a (almost) $d$-regular triangle-free factor-critical graph with matching number $Z(d)$ whose existence is guaranteed by Proposition \ref{prop:properties-B(d)} and Corollary \ref{cor:exist}.

\begin{theorem}\label{thm:ALL-IN-ONE}
Let $d$ and $m$ be natural numbers with $d\geq 2$, and let $k$ and $r$ be non-negative integers such that $m=k Z(d)+r$ with $0\leq r<Z(d)$. Then, for all the cases with $d\geq m$, and for the cases $d<m$ with either $d\leq 6$ or $Z(d)\leq m < 2d$, we have
\begin{equation}\label{eqn:single-formula}
f_{\vartriangle}(d,m)=\begin{cases}
	dm+k\lfloor d/2\rfloor&\text{if }r<d,\\
	dm+k\lfloor d/2\rfloor+r-d+1&\text{if }r\geq d,
\end{cases}\tag{*}
\end{equation}
where a graph in $\mathcal{G}_{\vartriangle}(d,m)$ can be constructed as the disjoint union of $k$ copies of $C_d$ and
\begin{itemize}
\item[(i)] $A_d$ if $r\geq d$,
\item[(ii)] $r$ copies of $d$-stars if $r<d$.
\end{itemize}
\end{theorem}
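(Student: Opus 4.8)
The plan is to unify the separately-proved formulas from Theorems~\ref{thm:case-d-greater-m}, \ref{thm:case-d-d}, \ref{thm:5d-over-4-case}, \ref{thm:d123} and \ref{thm:d4567} by verifying that, in each of the regimes covered, the piecewise expression~\eqref{eqn:single-formula} reduces to the value already established. The key observation driving the whole argument is the recursion of Lemma~\ref{lem:MAIN-RECURSION} together with the boundary values $g_{\vartriangle}(d,d)=1$ and $g_{\vartriangle}(d,Z(d))=\lfloor d/2\rfloor$ that we already recorded. Writing $m=kZ(d)+r$ with $0\leq r<Z(d)$, the natural strategy is to interpret the two cases of~\eqref{eqn:single-formula} through the optimal solution of the integer program in $\mathcal{F}(d,m)$: we should use $k$ components of matching number $Z(d)$ (each contributing $\lfloor d/2\rfloor$ to $g_{\vartriangle}$), and then spend the residual $r$ either on a single non-star component $A_d$ (possible exactly when $r\geq d$, contributing $r-d+1$ since $A_d$ has matching number $d$ and $g_{\vartriangle}(d,d)=1$, with the extra $r-d$ units of matching absorbed by stars of zero marginal gain) or entirely on $r$ $d$-stars when $r<d$.

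First I would dispose of the trivial boundary cases $d\geq m$ and $d=m$ directly against Theorems~\ref{thm:case-d-greater-m} and~\ref{thm:case-d-d}: when $d>m$ we have $k=0$ and $r=m<d$, so~\eqref{eqn:single-formula} gives $dm$, matching Theorem~\ref{thm:case-d-greater-m}; when $d=m$ we again have $k=0$, $r=d$, and the second branch gives $dm+r-d+1=d^2+1$, matching Theorem~\ref{thm:case-d-d}. Next I would treat the range $Z(d)\leq m<2d$: here $k=1$ and $r=m-Z(d)<Z(d)$, and since $Z(d)<2d$ we have $r<d$ forces nothing special, but in fact one checks $r=m-Z(d)$ and the first branch yields $dm+\lfloor d/2\rfloor$, reproducing Theorem~\ref{thm:5d-over-4-case} (one must verify $r<d$ always holds in this window, which follows from $m<2d$ and $Z(d)\geq d$). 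Finally, for $d\in\{2,3\}$ I would substitute $Z(d)=d$ so that $m=kd+r$ with $0\leq r<d$ always lands in the first branch, giving $dm+k\lfloor d/2\rfloor=dm+\lfloor m/d\rfloor\lfloor d/2\rfloor$ as in Theorem~\ref{thm:d123}; and for $d\in\{4,5,6\}$ I would substitute $Z(d)=d+1$ and split on whether the residue $r$ equals $d$ (equivalently $m+1$ divisible by $d+1$), matching the two cases of Theorem~\ref{thm:d4567}.

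The construction claims in parts~(i) and~(ii) follow from assembling the components whose edge counts were computed earlier: $k$ copies of $C_d$ contribute $k(dZ(d)+\lfloor d/2\rfloor)$ edges by Proposition~\ref{prop:properties-B(d)} and Corollary~\ref{cor:exist}, the graph $A_d$ contributes $d^2+1$ edges with matching number $d$ when $r\geq d$ (the surplus matching $r-d$ being supplied by stars), and otherwise $r$ $d$-stars contribute $dr$ edges. One then verifies that the total number of edges equals the right-hand side of~\eqref{eqn:single-formula} and that the matching number is exactly $m$, confirming that the displayed value is attained. The main obstacle I anticipate is bookkeeping rather than conceptual: one must carefully confirm that the residue $r=m-kZ(d)$ lands in the correct branch for each regime (in particular checking the inequalities $r<d$ or $r\geq d$ against the constraints defining each theorem's hypothesis), and that the leftover matching $r-d$ in the case $r\geq d$ can indeed be realized by $d$-stars without exceeding $m$ or violating $\Delta\leq d$. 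This matching of arithmetic across the four overlapping regimes is where sign and floor-function errors are most likely to creep in, so the verification for each case should be done explicitly.
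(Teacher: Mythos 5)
Your overall strategy is exactly the paper's proof: both arguments simply check, regime by regime, that the unified formula \eqref{eqn:single-formula} reproduces the values already established in Theorems \ref{thm:case-d-greater-m}, \ref{thm:case-d-d}, \ref{thm:5d-over-4-case}, \ref{thm:d123} and \ref{thm:d4567}. However, two of your steps fail as written. First, in the case $d=m$ you assert that ``we again have $k=0$, $r=d$, and the second branch gives $d^2+1$''. For $d=m\in\{2,3\}$ this violates the defining constraint $0\leq r<Z(d)$: by Lemma \ref{lem:smaller-Z(d)-values} we have $Z(d)=d$ there, so the unique decomposition is $k=1$, $r=0$, and it is the \emph{first} branch that must yield $d^2+1$ --- which it does only because $\lfloor d/2\rfloor=1$ for $d\in\{2,3\}$, a coincidence that has to be checked explicitly. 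Since the formula is defined through the unique legal pair $(k,r)$, evaluating it at an illegal pair does not verify the statement, even though here it happens to give the same number. The paper accordingly splits $d=m$ into the subcases $d\in\{2,3\}$ (where $k=1$, $r=0$) and $d\geq 4$ (where $Z(d)\geq d+1$ gives $k=0$, $r=d$); your write-up needs the same split.

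Second, your construction for $r\geq d$ --- take $A_d$ and absorb the ``surplus matching $r-d$'' with $d$-stars --- is arithmetically inconsistent with the formula: $k$ copies of $C_d$, one copy of $A_d$, and $r-d$ stars have
\begin{equation*}
k\bigl(dZ(d)+\lfloor d/2\rfloor\bigr)+(d^2+1)+d(r-d)=dm+k\lfloor d/2\rfloor+1
\end{equation*}
edges, whereas the second branch of \eqref{eqn:single-formula} claims $dm+k\lfloor d/2\rfloor+r-d+1$; these agree only when $r=d$. What saves the theorem (and explains why its statement lists only $A_d$, with no padding stars, in case (i)) is a fact you never establish: in every regime the theorem covers, $r\geq d$ actually forces $r=d$. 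Indeed, for $d=m\geq 4$ one has $k=0$ and $r=d$ exactly; for $d\in\{4,5,6\}$ and $d<m$ one has $Z(d)=d+1$, hence $r\leq d$; and in all remaining covered regimes $r<d$. The paper's case analysis verifies this implicitly by pinning down $k$ and $r$ in each regime; your argument, which acknowledges this bookkeeping as the danger point but does not carry it out, leaves precisely that gap open.
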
	
\begin{proof}
Let $m=k Z(d)+r$ for some non-negative integers $k$ and $r$ with $0\leq r<Z(d)$. If $m<d$, then we find $k=0$ and $r=m$ since $Z(d)\geq d$ by Lemma \ref{lem:smaller-Z(d)-values}, so \eqref{eqn:single-formula} holds by Theorem \ref{thm:case-d-greater-m}. If $d=m\in\{2,3\}$, then we get $k=1$ and $r=0$ since $Z(d)=d$ by Lemma \ref{lem:smaller-Z(d)-values}. Since $\lfloor d/2\rfloor=1$, \eqref{eqn:single-formula} holds by Theorem \ref{thm:case-d-d}. If $d=m\geq 4$, then we get $k=0$ and $r=d$ since $Z(d)=d+1$ by Lemma \ref{lem:smaller-Z(d)-values}. Since $r-d+1=1$, \eqref{eqn:single-formula} holds by Theorem \ref{thm:case-d-d}. Suppose $d<m$. If $d\in\{2,3\}$, then we get $Z(d)=d$, which implies $k=\lfloor m/d\rfloor$ and $r<d$, so \eqref{eqn:single-formula} holds by Theorem \ref{thm:d123}. If $d\in\{4,5,6\}$, then we get $Z(d)=d+1$, which implies $k=\lfloor m/(d+1)\rfloor$. Moreover, we find $r=d$ if $m+1$ is divisible by $k+1$ and $r<d$ otherwise. Therefore, if $m+1$ is divisible by $k+1$ then $r-d+1=1$, so \eqref{eqn:single-formula} holds by Theorem \ref{thm:d4567}. Finally, if $Z(d) \leq m <2d$, then since $d\leq Z(d)$ we have $m<2Z(d)$, thus $k=1$ and $r<d$, then \eqref{eqn:single-formula} holds by Theorem \ref{thm:5d-over-4-case}.  
\end{proof}

Now, we can report the difference between $f_{\mathcal{GEN}}(d,m)$ and  $f_{\vartriangle}(d,m)$ based on our findings. Theorems \ref{thm:general} and \ref{thm:ALL-IN-ONE} give $$h_{\vartriangle}(d,m):=f_{\mathcal{GEN}}(d,m)-f_{\vartriangle}(d,m)=\left\lfloor \frac{d}{2} \right\rfloor  \Bigg(\left\lfloor \frac{m}{\lceil \frac{d}{2} \rceil} \right\rfloor-k\Bigg)-(r-d+1),$$ where $m=k Z(d)+r$ with $0\leq r<Z(d)$ provided that $d$ and $m$ satisfy one of the following conditions:
	\begin{itemize}
	\item[(i)] $d\geq m$,
	\item[(ii)] $d<m$ and $d\leq 6$,
	\item[(iii)] $d<m$ and $Z(d)\leq m < 2d$.
	\end{itemize}
The difference $h_{\vartriangle}(d,m)$ corresponds to the number of edges that we loose in the triangle-free case as compared to the general one. Thus, we get the following where we observe that apart from two simple cases (where $m<\lfloor d/2\rfloor$ or  $1=d< m$), we loose edges (with respect to the general case) by restricting the extremal graphs to be triangle-free: \[h_{\vartriangle}(d,m)=\begin{cases}
0,&\text{if }m<\lfloor d/2\rfloor,\\
\lfloor d/2\rfloor,&\text{if }\lfloor d/2\rfloor\leq m<d\\
d,&\text{if }d=m\text{ and $d$ is even},\\
\lfloor d/2\rfloor,&\text{if }d=m\text{ and $d$ is odd},\\
0,&\text{if $1=d< m$,}\\
m-\lfloor m/2\rfloor,&\text{if $2=d<m$,}\\
\lfloor m/2\rfloor-\lfloor m/3\rfloor,&\text{if $3=d<m$,}\\
2\lfloor m/2\rfloor-2\lfloor m/5\rfloor,&\text{if $4=d<m$ and $m+1$ is not divisible by 5,}\\
2\lfloor m/2\rfloor-2\lfloor m/5\rfloor-1,&\text{if $4=d<m$ and $m+1$ is divisible by 5,}\\
2\lfloor m/3\rfloor-2\lfloor m/6\rfloor,&\text{if $5=d<m$ and $m+1$ is not divisible by 6,}\\
2\lfloor m/3\rfloor-2\lfloor m/6\rfloor-1,&\text{if $5=d<m$ and $m+1$ is divisible by 6,}\\
3\lfloor m/3\rfloor-3\lfloor m/7\rfloor,&\text{if $6=d<m$  and $m+1$ is not divisible by 7,}\\
3\lfloor m/3\rfloor-3\lfloor m/7\rfloor-1,&\text{if $6=d<m$ and $m+1$ is divisible by 7,}\\
\lfloor d/2\rfloor,&\text{if $d\geq 7$ and $Z(d)\leq m<3\lceil d/2\rceil$,}\\
2\lfloor d/2 \rfloor,&\text{if $d\geq 7$ and $3\lceil d/2\rceil \leq m<2d$.}

\end{cases}\]

In light of Theorem \ref{thm:ALL-IN-ONE}, the remaining open cases are for $7\leq d< m$, and either $m<Z(d)$ or $m\geq 2d$. In what follows, we will discuss further formulations to solve these remaining cases and suggest some conjectures.

\section{An integer programming formulation and further discussions}\label{sec:discussion}
To solve the open cases, namely for natural numbers $m$ and $d$ such that $7\leq d< m$ with either $m<Z(d)$ or $m\geq 2d$, we develop an integer programming formulation based on our earlier observations. In Conjecture \ref{con:Turan-generalization}, we provide all the parameters involved in this formulation, which is already a challenging problem. Then, under the assumption that Conjecture \ref{con:Turan-generalization} holds, we show that our integer program admits an optimal solution with a special structure. This, in turn, allows us to formulate in Conjecture \ref{con:general-formula-wrt-Z(d)} that Theorem \ref{thm:ALL-IN-ONE} is valid for all $m$ and $d$. Lastly, we also conjecture unknown values of $Z(d)$ (see Lemma \ref{lem:lower-bound-Z(d)}), which plays a crucial role in the solution of our problem. We conclude our paper with a reformulation of our problem as a variant of the well-known extremal problem addressed in Turan's Theorem.

By Corollary \ref{cor:structure-not-star} and Lemma \ref{lem:bound-5d-over-4}, there is an edge-extremal graph $G\in \mathcal{G}_{\vartriangle} (d,m)$ whose components are either $d$-stars, or edge-extremal factor-critical triangle-free graphs $H$ where $d\leq \nu(H)\leq Z(d)$. In other words, by letting $x_i$ to be the number of connected components of $G$ whose matching number is $i$, we have (as expressed in Lemma  \ref{lem:MAIN-RECURSION} in terms of $g_{\vartriangle}(d,m)$):

$$f_{\vartriangle}(d,m)= d\Big(m-\sum_{i=d}^{Z(d)}ix_i\Big)+\sum_{i=d}^{Z(d)}f_{\vartriangle}(d,i)x_i
= dm+\sum_{i=d}^{Z(d)}(f_{\vartriangle}(d,i)-di)x_i.$$

It follows that, for a fixed $d$, the value of $f_{\vartriangle}(d,m)$ can be determined for all natural numbers $m$ by finding the values of $f_{\vartriangle}(d,i)$ and corresponding $x_i$ values only for $d\leq i\leq Z(d)$. Accordingly, $f_{\vartriangle}(d,m)$ can be computed as the optimal value of the following integer programming:\\

\begin{center}
\flushleft \textbf{Model 1: }
	$$\displaystyle{\max \,\,  dm+\sum_{i=d}^{Z(d)}(f_{\vartriangle}(d,i)-di)x_i}$$	
	$$\displaystyle{ {\mbox{subject to }}\sum_{i=d}^{Z(d)}ix_i\leq m}$$
	$$x_i\geq 0, x_i\in\mathbb{Z}$$
\end{center}

This formulation can be seen as a bounded knapsack problem where there is a bounded number of items of each type. The utilities of the items are $(f_{\vartriangle}(d,i)-di)$  for $d\leq i\leq Z(d)$ and the volumes of the items range from $d$ to $Z(d)$ which is yet unknown if $d$ is odd (see Lemma \ref{lem:lower-bound-Z(d)}).

Recall that we have $f_{\vartriangle}(d,d)=d^2+1$ for $d\geq 2$ by Theorem \ref{thm:case-d-d}, and $f_{\vartriangle}(d,Z(d))=dZ(d)+\lfloor d/2\rfloor$ for $d\geq 2$ by Theorem \ref{thm:5d-over-4-case}. It remains to compute $f_{\vartriangle}(d,i)$ for $d < i < Z(d)$. We suggest Conjecture \ref{con:Turan-generalization} for the value of $f_{\vartriangle}(d,i)$ for $d < i < Z(d)$, which in turn, allows us to conjecture that the formula for $f_{\vartriangle}(d,m)$ in Theorem \ref{thm:ALL-IN-ONE} can be extended to all the remaining cases as well (in Conjecture \ref{con:general-formula-wrt-Z(d)}). Lastly, bearing in mind that the formula giving the value of $f_{\vartriangle}(d,m)$ can only be computed if $Z(d)$ is known; we suggest Conjecture \ref{con:open-Z(d)-values} to settle the values of $Z(d)$ for odd $d\geq 21$, which is left open (see Lemma \ref{lem:lower-bound-Z(d)}).

In what follows, we conjecture that for $d< i<Z(d)$,  $f_{\vartriangle}(d,i)$ follows the same trend as what we identified in other cases.

\begin{conjecture}\label{con:Turan-generalization}
Theorem \ref{thm:ALL-IN-ONE} holds also for $7\leq d<m<Z(d)$. In other words, for $7\leq d< i<Z(d)$, we have $$f_{\vartriangle}(d,i)=di+i-d+1.$$
\end{conjecture}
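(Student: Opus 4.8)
The plan is to establish the conjecture by proving matching lower and upper bounds on $f_{\vartriangle}(d,i)$ for $7\leq d<i<Z(d)$. For the lower bound, I would exhibit an explicit construction realizing $di+i-d+1$ edges. Recalling that the formula $di+i-d+1 = di + (i-d+1)$ has the shape $d\cdot\nu + (\text{surplus})$, and that both $A_d$ (with $\nu=d$, surplus $1$) and $C_d$ (with $\nu=Z(d)$, surplus $\lfloor d/2\rfloor$) are triangle-free factor-critical graphs living on the blow-up of $C_5$, the natural guess is that one can interpolate between $A_d$ and $C_d$ by an intermediate blow-up-of-$C_5$ construction $G_i$ on $2i+1$ vertices. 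First I would write down, for each target matching number $i$ with $d<i<Z(d)$, a triangle-free factor-critical graph on $2i+1$ vertices whose degree sequence keeps as many vertices as possible at degree $d$: concretely, distribute the ``missing'' degree so that exactly $d-1-(i-d)=2d-1-i$ vertices fall one short of $d$ and the rest are $d$-regular, which by $\frac{1}{2}\big((2i+1)d-(2d-1-i)\big)$ yields exactly $di+i-d+1$ edges. The work here is to verify factor-criticality (via a partition into regular bipartite pieces, exactly as in the proof of Proposition \ref{prop:properties-B(d)}) and triangle-freeness (automatic from the $C_5$-blow-up structure); this is the main content of the lower bound and is where most of the effort lies.

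For the upper bound, I would apply the structural machinery already developed. By Corollary \ref{cor:structure-not-star}, an edge-extremal graph $H$ realizing $f_{\vartriangle}(d,i)$ with $d<i<Z(d)$ that is not a $d$-star is factor-critical on $2i+1$ vertices, so trivially $|E(H)|\leq\lfloor(2i+1)d/2\rfloor=di+\lfloor d/2\rfloor$. This is not tight enough, since we want $di+i-d+1<di+\lfloor d/2\rfloor$ precisely when $i<d-1+\lfloor d/2\rfloor$, so for larger $i$ the crude degree bound must be improved. The key observation is that $i<Z(d)$ means there is \emph{no} (almost) $d$-regular factor-critical triangle-free graph with matching number $i$, by the very definition of $Z(d)$; hence the degree sequence of $H$ must drop strictly below $d$-regularity by a quantifiable amount. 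I would make this quantitative: the deficiency $\sum_v (d-d_H(v))$ must be large enough to force at least $2d-1-i$ degree deficiencies, giving the matching upper bound $|E(H)|\leq di+i-d+1$. This step is the crux, and I expect it to be the main obstacle.

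The difficulty in the upper bound is that ``$i<Z(d)$'' only rules out \emph{exactly} the regular/almost-regular case, not degree sequences that are close to regular; one needs a robust lower bound on the total degree deficiency of \emph{any} triangle-free factor-critical graph on $2i+1$ vertices when $i<Z(d)$, not merely non-existence at the extreme. I would try to extract this from Corollary \ref{key} (the Andrásfai-type bound): a triangle-free graph on $2i+1$ vertices with minimum degree exceeding $\tfrac{2(2i+1)}{5}$ is bipartite, hence not factor-critical, so $\delta(H)\leq \tfrac{2(2i+1)}{5}$, which for $i$ near $Z(d)\approx 5d/4$ pins the minimum degree close to $d$ and constrains how the deficiencies can be spread. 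Combining this spectral-free density bound with a careful counting of how many vertices can simultaneously carry near-maximum degree without creating a triangle (the same $s$-neighbors-in-$C_{2s+1}$ argument used in Lemma \ref{lem:factor-critical-triangle-free}) should yield the required deficiency lower bound. Should the Andrásfai bound prove too weak in the intermediate range, a fallback is to argue directly on the $C_5$-blow-up structure, showing any extremal $H$ must itself be such a blow-up and then optimizing the bag sizes by a discrete convexity argument analogous to the knapsack reasoning in Theorem \ref{thm:d4567}; this structural-rigidity route is likely what actually closes the gap, though proving that \emph{every} extremal $H$ is a $C_5$-blow-up is itself substantial and is the reason the statement remains a conjecture rather than a theorem.
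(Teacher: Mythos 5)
The statement you are attacking is precisely what the paper leaves open: it appears only as Conjecture \ref{con:Turan-generalization}, the paper gives no proof of it, and the surrounding material (Propositions \ref{prop:Conjecture-1-Z(7)} and \ref{prop:Conjecture-1-Implies}) only derives consequences \emph{conditional} on its truth. So your attempt cannot be measured against a hidden argument in the paper; it would have to stand as a complete proof of an open problem, and it does not. You essentially concede this yourself in your last sentence, and that self-assessment is accurate: what you have written is a plan whose central step is missing.

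Two concrete problems. First, the lower-bound construction contains an arithmetic error and is never actually carried out. With $k$ vertices of degree $d-1$ and $2i+1-k$ vertices of degree $d$, the edge count is $\tfrac{1}{2}\bigl((2i+1)d-k\bigr)$; to hit the target $di+i-d+1$ you need $k=3d-2-2i$, not $k=2d-1-i$ as you wrote. Your choice yields $di+\tfrac{i-d+1}{2}$, which falls short of the target and is not even an integer when $i-d$ is even. More importantly, even with the corrected degree sequence, exhibiting for \emph{every} $d<i<Z(d)$ a triangle-free factor-critical graph on $2i+1$ vertices realizing it (say as a $C_5$-blow-up with suitable bag sizes, in the style of Proposition \ref{prop:properties-B(d)}) is exactly the existence question the conjecture encodes; asserting that it can be done by interpolating between $A_d$ and $C_d$ is not a proof. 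Second, the upper bound has a genuine logical gap that you correctly identify but do not close. Writing $D=\sum_v\bigl(d-d_H(v)\bigr)$ for a factor-critical extremal component $H$ on $2i+1$ vertices, the desired bound $|E(H)|\le di+i-d+1$ is equivalent to $D\ge 3d-2-2i$, a total deficiency of order $d$; but Definition \ref{def:minimum-order} only rules out $D\le 1$ when $i<Z(d)$ (the regular and almost-regular cases). Nothing available bridges this gap: Corollary \ref{key} applies only when $\delta(H)>\tfrac{2(2i+1)}{5}$, and the blow-up rigidity result of \cite{Haggkvist1} only when $\delta(H)>3|V(H)|/8$, whereas an arbitrary extremal $H$ could a priori concentrate its deficiency on a few low-degree vertices and satisfy neither hypothesis. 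Until either the deficiency bound or the structural rigidity of extremal graphs is proved unconditionally, the statement remains a conjecture.
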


\noindent If Conjecture \ref{con:Turan-generalization} holds, then we get  $f_{\vartriangle}(d,i)-di=i-d+1$ for $2\leq d\leq i<Z(d)$. Since $f_{\vartriangle}(d,Z(d))=dZ(d)+\lfloor d/2 \rfloor$ and the constant term $dm$ in the objective function does not effect the optimal solution, Model 1 is equivalent to solve the following optimization problem: 

\begin{center}
\flushleft \textbf{Model 2:} 
	$$\displaystyle{\max \,\, \lfloor d/2\rfloor x_{Z(d)}+\sum_{i=d}^{Z(d)-1}(i-d+1)x_i}$$	
	$$\displaystyle{ {\mbox{subject to }}\sum_{i=d}^{Z(d)}ix_i\leq m}$$
	$$x_i\geq 0, x_i\in\mathbb{Z}$$
\end{center}

\noindent We claim that if Conjecture \ref{con:Turan-generalization} holds, then Model 2 admits an optimal solution with nice properties.  First, we need a direct consequence of Conjecture \ref{con:Turan-generalization}.

\begin{proposition}\label{prop:Conjecture-1-Z(7)}
If Conjecture \ref{con:Turan-generalization} is true, then we have $Z(7)\in\{8,9\}$.
\end{proposition}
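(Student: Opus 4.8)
The plan is to pin down $Z(7)$ using the general bounds already available together with the conjectured value of $f_{\vartriangle}(7,i)$ for the intermediate matching numbers. By Lemma \ref{lem:lower-bound-Z(d)}, since $7$ is odd, we already know $\lfloor 5\cdot 6/4\rfloor \leq Z(7)\leq \lfloor 5\cdot 8/4\rfloor$, that is, $7\leq Z(7)\leq 10$. Moreover, by Lemma \ref{lem:smaller-Z(d)-values} we have the stronger lower bound $Z(d)\geq d+1$ for all $d\geq 4$, so in fact $8\leq Z(7)\leq 10$. Thus it suffices to rule out the two largest candidate values, $Z(7)=10$ and $Z(7)=9$, leaving only $Z(7)\in\{8,9\}$.

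First I would recall what $Z(7)$ means: it is the smallest matching number of an almost $7$-regular (since $7$ is odd) triangle-free factor-critical graph. For a factor-critical graph $C_7$ realizing $\nu(C_7)=Z(7)$ we have $|V(C_7)|=2Z(7)+1$ and $|E(C_7)|=7\,Z(7)+\lfloor 7/2\rfloor = 7\,Z(7)+3$, since it is almost $7$-regular. The key idea is that if $Z(7)$ were large, say $9$ or $10$, then by minimality there would be \emph{no} such graph with smaller matching number; in particular, for every $i$ with $7\leq i<Z(7)$, no almost $7$-regular factor-critical triangle-free graph on $2i+1$ vertices exists. This is exactly the regime governed by Conjecture \ref{con:Turan-generalization}, which asserts $f_{\vartriangle}(7,i)=7i+i-7+1=8i-6$ for $7<i<Z(7)$. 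The crux is that an almost $7$-regular factor-critical triangle-free graph $H$ on $2i+1$ vertices would have $|E(H)|=7i+3$, which would force $f_{\vartriangle}(7,i)\geq 7i+3$; but if $i-7+1<3$, i.e. $i<9$, the conjectured value $8i-6=7i+(i-6)$ is strictly smaller than $7i+3$, giving a contradiction. This rules out the existence of such a graph for $i=7,8$, hence shows $Z(7)\notin\{7,8\}$ is \emph{not} the consequence we want—rather, it shows that any $C_7$ must have $\nu(C_7)\geq 9$. Conversely, to cap $Z(7)$ at $9$, I would produce (or invoke) an almost $7$-regular triangle-free factor-critical graph with matching number exactly $9$, e.g. by checking whether the construction $B_7$ of Proposition \ref{prop:properties-B(d)} can be thinned to one with $\nu=9$; since $\nu(B_7)=\lfloor 5\cdot 8/4\rfloor=10$, I would instead argue that the conjectured extremal value at $i=9$ already achieves the edge count $7\cdot 9+3$, which is consistent with factor-criticality, whereas at $i=10$ minimality would be violated.

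More carefully, the cleanest route is: under Conjecture \ref{con:Turan-generalization}, for $7<i<Z(7)$ the edge-extremal triangle-free graph in $\mathbb{M}_{\vartriangle}(7,i)$ has $f_{\vartriangle}(7,i)=8i-6$ edges. A factor-critical almost $7$-regular graph on $2i+1$ vertices has $7i+3$ edges and lies in $\mathbb{M}_{\vartriangle}(7,i)$, so its existence forces $8i-6\geq 7i+3$, i.e. $i\geq 9$. Therefore no such graph exists for $i=7,8$, which is consistent, but critically it also shows that \emph{if} one exists at all then its matching number is at least $9$; combined with $Z(7)\leq \nu(B_7)=10$ from Proposition \ref{prop:properties-B(d)} and the definition of $Z(7)$ as a minimum, we conclude $Z(7)\in\{9,10\}$. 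To exclude $10$, I would use the borderline equality case: at $i=9$ we have $8\cdot 9-6=66=7\cdot 9+3$, so the conjectured extremal graph at $i=9$ is itself almost $7$-regular and, being edge-extremal and factor-critical by Corollary \ref{cor:structure-not-star}, realizes $Z(7)=9$; hence $Z(7)\leq 9$. Together with $Z(7)\geq 8$ from Lemma \ref{lem:smaller-Z(d)-values} this yields $Z(7)\in\{8,9\}$.

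The main obstacle is correctly reconciling the equality-case arithmetic at $i=9$: one must verify that the conjectured edge count $8i-6$ at $i=9$ coincides exactly with the almost-regular count $7i+3$, and argue that this forces the extremal graph to be a valid $C_7$ with $\nu=9$, thereby collapsing the upper bound from $10$ down to $9$. I would double-check the inequality $8i-6\geq 7i+3 \Leftrightarrow i\geq 9$ and its strictness for $i\leq 8$, since the whole exclusion hinges on that comparison being strict below $9$ and tight at $9$. The remaining bookkeeping—matching the general bounds of Lemma \ref{lem:lower-bound-Z(d)} and Lemma \ref{lem:smaller-Z(d)-values} against these two candidate values—is routine.
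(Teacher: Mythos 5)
Your final chain of reasoning is essentially the paper's own argument: assume $Z(7)=10$ (the only value that needs to be excluded, since $8\leq Z(7)\leq 10$ follows from Lemmas \ref{lem:smaller-Z(d)-values} and \ref{lem:lower-bound-Z(d)}); then Conjecture \ref{con:Turan-generalization} applies at $i=9$ and gives $f_{\vartriangle}(7,9)=8\cdot 9-6=66=7\cdot 9+3$; a graph on $19$ vertices with maximum degree $7$ has at most $\lfloor 19\cdot 7/2\rfloor=66$ edges, with equality only for an almost $7$-regular graph, so an extremal graph at $m=9$ yields an almost $7$-regular triangle-free factor-critical graph with matching number below $Z(7)$, contradicting the minimality in Definition \ref{def:minimum-order}. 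The paper packages exactly this as the claim that $Z(7)$ is the smallest $k$ with $f_{\vartriangle}(7,k)=7k+3$ (it proves $f_{\vartriangle}(7,k)\leq 7k+2$ for $k<Z(7)$ because non-star components cannot be almost $7$-regular), and then notes the conjecture would violate this at $k=9$. So: same approach, different packaging.

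Two things in your write-up need repair, though. First, the middle of your argument contains an invalid deduction: for $7<i<Z(7)$, the implication ``an almost $7$-regular factor-critical triangle-free graph with $\nu=i$ exists $\Rightarrow i\geq 9$'' is vacuous (no such graph with $\nu<Z(7)$ exists, by the very definition of $Z(7)$), and it says nothing about graphs with $\nu\geq Z(7)$, where the conjecture does not apply. Hence your intermediate conclusions that ``any $C_7$ must have $\nu(C_7)\geq 9$'' and that ``$Z(7)\in\{9,10\}$'' are unjustified --- and they had better be, since combined with your exclusion of $10$ they would prove $Z(7)=9$, strictly more than the proposition asserts (the value $Z(7)=8$ is still allowed). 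Relatedly, your opening sentence ``it suffices to rule out $Z(7)=10$ and $Z(7)=9$, leaving $Z(7)\in\{8,9\}$'' is a slip: only $10$ must be excluded. Second, the step you yourself flag as the main obstacle --- that $f_{\vartriangle}(7,9)=66$ \emph{forces} an almost $7$-regular factor-critical witness --- is asserted rather than proved; it is not automatic that an extremal graph is connected. It is, however, a short verification via Corollary \ref{cor:structure-not-star}: a graph $G\in\mathcal{G}_{\vartriangle}(7,9)$ has at most one non-star component (two would force $\nu(G)\geq 14>9$), and if all components were $7$-stars then $|E(G)|\leq 63<66$; so $G$ consists of $t$ stars plus one factor-critical component $H$ with $t+\nu(H)\leq 9$, whence $66=7t+|E(H)|\leq 7t+7\nu(H)+3\leq 66$. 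Equality forces $|E(H)|=7\nu(H)+3$, and by parity of the degree sum on $2\nu(H)+1$ vertices this means $H$ is almost $7$-regular --- the desired contradiction with $\nu(H)\leq 9<Z(7)$.
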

\begin{proof}
We claim that $Z(7)$ is the smallest natural number $k$ satisfying $f_{\vartriangle}(7,k)=7k+3$. Indeed, we know that $f_{\vartriangle}(7,Z(d))=7Z(d)+3$ by Theorem \ref{thm:5d-over-4-case}. Now, suppose $f_{\vartriangle}(7,k)=7k+3$ for some $k<Z(7)$. Then, we can take a graph $G\in \mathcal{G}_{\vartriangle}(7,k)$ with $7k+3$ edges. By Corollary \ref{cor:structure-not-star}, we know that each connected component $H$ of $G$ that is not a $7$-star is factor-critical with $\nu(H)\geq 7$. Since $\nu(H)\leq k<Z(7)$, it follows that $H$ is not almost $7$-regular, so $E(H)\leq ((2\nu(H)+1)7-3)/2=7\nu(H)+2$. By summing the number of edges over all connected components of $G$, we find $7k+3=|E(G)|\leq 7k+2$, which is a contradiction. As a result, we have $f_{\vartriangle}(7,k)<7k+3$ for $k<Z(d)$. Now, if Conjecture \ref{con:Turan-generalization} holds and $Z(7)\geq 10$, the above discussion implies that $f_{\vartriangle}(7,9)=66< 7\times 9+3$, which is a contradiction. Since $Z(7)\geq 8$ by Lemma \ref{lem:smaller-Z(d)-values}, the result follows. 

\end{proof}
\noindent Now, we are ready to discuss the optimal solution to Model 2 that admits nice properties.

\begin{proposition}\label{prop:Conjecture-1-Implies}
If Conjecture \ref{con:Turan-generalization} is true, then for $7\leq d<m<Z(d)$, Model 2 admits an optimal solution with $\displaystyle{\sum_{i=d}^{Z(d)-1}x_i\leq 1}$, that is where $x_{Z(d)}$ is maximized, and there is at most one other $x_i$ which is 1 (all the rest being zero). 
\end{proposition}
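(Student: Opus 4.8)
The plan is to exploit that, in this regime, the knapsack capacity is smaller than twice the smallest item weight, so that feasibility alone forces the claimed structure. First I would record the data of Model~2 under Conjecture~\ref{con:Turan-generalization}: each variable $x_i$, $d\le i\le Z(d)$, counts components of matching number $i$, of ``weight'' $i$ and objective coefficient $f_{\vartriangle}(d,i)-di$, which equals $i-d+1$ for $d\le i\le Z(d)-1$ (this is exactly where Conjecture~\ref{con:Turan-generalization} enters, together with Theorem~\ref{thm:case-d-d} at $i=d$) and equals $\lfloor d/2\rfloor$ at $i=Z(d)$ (Theorem~\ref{thm:5d-over-4-case}). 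The only fact I need about $Z(d)$ is $Z(d)<2d$, which follows from Lemma~\ref{lem:lower-bound-Z(d)} since $Z(d)\le\lfloor 5(d+1)/4\rfloor<2d$ for $d\ge 7$; hence the hypothesis $m<Z(d)$ already gives $m<2d$.

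The crux is then immediate. Every item has weight at least $d$, so any feasible vector $(x_i)$ obeys $d\sum_i x_i\le\sum_i i\,x_i\le m<2d$, forcing $\sum_i x_i\le 1$. Thus no feasible solution can use two or more components. In particular $x_{Z(d)}=0$ is forced, because its weight $Z(d)$ already exceeds $m$; so $x_{Z(d)}$ is trivially at its maximum, while $\sum_{i=d}^{Z(d)-1}x_i\le 1$ with the unique nonzero coordinate, if present, equal to $1$. Consequently every feasible solution, and a fortiori every optimal one, already has the asserted shape. To identify the optimum concretely I would only note that the coefficient $i-d+1$ increases in $i$, so the best single item is the largest affordable type $i=m$, matching the construction behind Theorem~\ref{thm:ALL-IN-ONE}.

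I expect essentially no obstacle for the stated range $m<Z(d)$: the weight-budget inequality does all the work, and the item densities never enter. The genuine difficulty lies instead in the complementary open regime $m\ge 2d$, where $x_{Z(d)}$ can be large and one must argue by exchange, first showing that type $Z(d)$ has strictly the largest value density $\lfloor d/2\rfloor/Z(d)$ (roughly $2/5$) against $(i-d+1)/i$ (at most roughly $1/5$) for $i<Z(d)$, and then trading any two small items of weights $i,j$ for one $Z(d)$-item plus at most one leftover without decreasing the objective. That trade is the delicate step, since the freed weight $i+j-Z(d)$ must be reabsorbed; it is precisely what the hypothesis $m<Z(d)$ lets us bypass here.
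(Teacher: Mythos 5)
Your argument is correct for the statement as literally written, but it is a genuinely different (and much more elementary) route than the paper's, and the difference is worth flagging. You lean entirely on the hypothesis $m<Z(d)$: since $Z(d)<2d$ for $d\geq 7$ (Lemmas \ref{lem:smaller-Z(d)-values} and \ref{lem:lower-bound-Z(d)}) and every item has weight at least $d$, feasibility alone gives $\sum_i x_i\leq 1$ and forces $x_{Z(d)}=0$, so every feasible (hence every optimal) solution of Model 2 trivially has the claimed shape; Conjecture \ref{con:Turan-generalization} then only serves to identify the best single affordable item as $i=m$. The paper's proof never uses $m<Z(d)$: it takes an optimal solution maximizing $x_d+x_{Z(d)}$ and runs an exchange argument, trading two intermediate items of weight $j$ for items of weights $2j-Z(d)$ and $Z(d)$ when $j\geq (d+Z(d))/2$ (and for weights $d$ and $2j-d$ otherwise), handling a pair $x_a=x_b=1$ analogously, then $x_d\geq 2$, then the pair $x_d=x_j=1$; the inequalities certifying that these trades do not decrease the objective come from $Z(d)\leq\lfloor 5(d+1)/4\rfloor$ and from Proposition \ref{prop:Conjecture-1-Z(7)} ($Z(7)\in\{8,9\}$). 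That argument is valid for arbitrary $m>d$, which is exactly what the paper needs afterwards: the interpretation following the proposition (take $C_d$ as often as possible, i.e.\ $x_{Z(d)}=\lfloor m/Z(d)\rfloor$, plus one remainder component) and Conjecture \ref{con:general-formula-wrt-Z(d)} only have content when $m\geq Z(d)$. So the range $7\leq d<m<Z(d)$ in the statement appears to be a slip carried over from Conjecture \ref{con:Turan-generalization}; under the intended reading (general $m$, in particular $m\geq 2d$) your proof does not apply, and the exchange step you explicitly defer as ``the genuine difficulty'' is precisely the content of the paper's proof. In short: what you wrote buys brevity, but only because the stated hypothesis renders the proposition nearly vacuous; it does not substitute for the paper's argument in the regime the section is actually about.
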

\begin{proof}
Let $(x_{d},x_{d+1},\ldots,x_{Z(d)-1},x_{Z(d)})$ be an optimal solution for Model 2 with optimal value $opt$ and such that $x_d+x_{Z(d)}$ is maximal. We first show that $x_j\leq 1$ for all $d<j<Z(d)$. Assume the contrary. 

If $j\geq \dfrac{d+Z(d)}{2}$, then we can decrease $x_j$ by two, and increase each of $x_{2j-Z(d)}$ and $x_{Z(d)}$ by one, which gives another feasible solution for Model 2 with objective value $opt+(2j-Z(d)-d+1)+\lfloor d/2\rfloor-2(j-d+1)=opt+\lfloor(3d-2)/2\rfloor-Z(d)$. Since we increased $x_d+x_{Z(d)}$ by at least one, the new solution is not optimal by assumption, thus we have $\lfloor(3d-2)/2\rfloor\leq Z(d)-1$. By Lemma \ref{lem:lower-bound-Z(d)}, we obtain $\lfloor5(d+1)/4\rfloor\geq Z(d)\geq \lfloor 3d/2\rfloor$, which is a contradiction for $d\geq 8$. On the other hand, for $d=7$, Proposition \ref{prop:Conjecture-1-Z(7)} implies that $Z(7)\leq9$, which contradicts with $\lfloor(3d-2)/2\rfloor\leq Z(d)-1$. As a result, if $j\geq \dfrac{d+Z(d)}{2}$, we have $x_j\leq 1$. \\

\noindent If $j<\dfrac{d+Z(d)}{2}$, then we can decrease $x_j$ by two, and increase each one of $x_d$ and $x_{2j-d}$ by one, which would give a feasible solution with the objective value $opt+1+(2j-d+1)-2(j-d+1)=opt$. Since we increased $x_d+x_{Z(d)}$ by at least one, we get a contradiction. Therefore, we have $x_j\leq 1$ for all $d<j<Z(d)$.\\ 

\noindent Now, suppose $x_a=x_b=1$ for some $d<a<b<Z(d)$. If $a+b\geq Z(d)+d$, then let us decrease $x_a$ and $x_b$ by one, and increase $x_{a+b-Z(d)}$ and $x_{Z(d)}$ by one. By this way, we get a feasible solution with the objective value $$opt-(a-d+1)-(b-d+1)+(a+b-Z(d)-d+1)+\lfloor d/2\rfloor.$$ Since we increased $x_d+x_{Z(d)}$ by at least one, we should have $\lfloor(3d-2)/2\rfloor\leq Z(d)-1$ by the assumption. As similar to previous cases, this inequality does not hold for $d\geq 7$. If $a+b<d+Z(d)$, then let us decrease $x_a$ and $x_b$ by one, and increase $x_d$ and $x_{a+b-d}$ by one. By this way, we get a feasible solution with the objective value $$opt-(a-d+1)-(b-d+1)+1+((a+b-d)-d+1)=opt.$$ Since we increased $x_d+x_{Z(d)}$ by at least one, we get a contradiction. Therefore, we can say that $x_j\geq1$ holds for at most one $j$ value with $d<j<Z(d)$.\\

\noindent Now, if $x_d\geq 2$, let us decrease $x_d$ by two, and increase $x_{Z(d)}$ by 1. This yields a feasible solution with the objective value $opt-2+\lfloor d/2\rfloor>opt$, which is a contradiction. Hence, we have $x_d\leq 1$. The only remaining case is $x_d=1$ and there is exactly one $j$ value with $x_j=1$, $d<j<Z(d)$.\\

\noindent Let us decrease $x_d$ and $x_j$ by one, and increase $x_{Z(d)}$ by one. Then, we would get a feasible solution with the optimal value $$opt-1-(j-d+1)+\lfloor d/2\rfloor=opt+\lfloor(3d-4)/2\rfloor-j.$$ If $\lfloor(3d-4)/2\rfloor=j$, then we can obtain the same optimal value with $x_d=x_j=0$, so the result follows. Thus, we are done if we show the inequality $\lfloor(3d-4)/2\rfloor\geq j$ for $d\geq 7$. For $d=7$, note that $Z(7)\in\{8,9\}$ by Proposition \ref{prop:Conjecture-1-Z(7)}. If $Z(7)=8$, then there are no $j$ values with $7<j<Z(7)$, so we are done. If $Z(7)=9$, then, we get $j=8$ and so the equality is satisfied. For $d=8$, we know $Z(8)=10$ by Lemma \ref{lem:lower-bound-Z(d)}, which gives $j\leq 9$ and so $\lfloor(3d-4)/2\rfloor-j>0$. For $d\geq 9$, by using Lemma \ref{lem:lower-bound-Z(d)}, we have $$\lfloor(3d-4)/2\rfloor\geq\lfloor(5d+1)/4\rfloor\geq Z(d)-1\geq j,$$ which completes the proof.
\end{proof}

\noindent Proposition \ref{prop:Conjecture-1-Implies} can be interpreted as follows: under the assumption that Conjecture \ref{con:Turan-generalization} holds, one can reach $f_{\vartriangle}(d,m)$ edges by taking the graph $C_d$ as much as possible and adding either one more graph that is extremal for $f_{\vartriangle}(d,r)$ or $r$ many stars, depending on $r\geq d$ where $r$ is the remainder of $m$ when divided by $Z(d)$. Notice that this is exactly how we construct an extremal graph in Theorem \ref{thm:ALL-IN-ONE}. Therefore, the formula in Theorem \ref{thm:ALL-IN-ONE} would be valid for all integers $d$ and $m$ if Conjecture \ref{con:Turan-generalization} is true:

\begin{conjecture}\label{con:general-formula-wrt-Z(d)}
Let $m=k Z(d)+r$ for some $0\leq r<Z(d)$. Then, we have $$f_{\vartriangle}(d,m)=\begin{cases}
	dm+k\lfloor d/2\rfloor&\text{if }r<d\\
	dm+k\lfloor d/2\rfloor+r-d+1&\text{if }r\geq d.
\end{cases}$$
\end{conjecture}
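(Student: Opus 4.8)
The plan is to prove the formula \emph{conditionally on Conjecture \ref{con:Turan-generalization}}, exactly as the surrounding discussion anticipates, by solving Model~2 exactly for all $m$. Assuming Conjecture \ref{con:Turan-generalization}, the utilities of the knapsack are completely pinned down: together with Theorem \ref{thm:case-d-d} and Theorem \ref{thm:5d-over-4-case} it gives
\[
g_{\vartriangle}(d,i)=f_{\vartriangle}(d,i)-di=\begin{cases} i-d+1 & \text{if }d\le i<Z(d),\\ \lfloor d/2\rfloor & \text{if }i=Z(d).\end{cases}
\]
By Lemma \ref{lem:MAIN-RECURSION} it then suffices to compute $g_{\vartriangle}(d,m)$, the optimal value of Model~2, for the only cases not already covered by Theorem \ref{thm:ALL-IN-ONE}, namely $7\le d<m$ with $m<Z(d)$ or $m\ge 2d$. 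Writing $k=\lfloor m/Z(d)\rfloor$ and $r=m-kZ(d)$, the claimed value is precisely $dm$ plus this optimum.

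The core of the argument is an exchange/greedy step establishing the recursion $g_{\vartriangle}(d,m)=\lfloor d/2\rfloor+g_{\vartriangle}(d,m-Z(d))$ for $m\ge Z(d)$, i.e.\ that using a maximal number of copies of $C_d$ is always optimal. The direction ``$\ge$'' is immediate: given an optimal packing for $m-Z(d)$, adding one copy of $C_d$ consumes exactly $Z(d)$ more budget and raises the objective by $\lfloor d/2\rfloor$. For ``$\le$'' I would show that whenever $m\ge Z(d)$ some optimal solution has $x_{Z(d)}\ge 1$; deleting that copy then yields a feasible packing for $m-Z(d)$ of value $g_{\vartriangle}(d,m)-\lfloor d/2\rfloor$. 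To produce such a solution I would reuse the exchange moves of Proposition \ref{prop:Conjecture-1-Implies} essentially verbatim: any two mid-sized components with volumes $a,b$ satisfying $a+b\ge Z(d)+d$ may be replaced by one component of volume $a+b-Z(d)$ together with a copy of $C_d$ without decreasing the objective, the governing inequality being $\lfloor(3d-2)/2\rfloor>Z(d)-1$, which follows from the bound $Z(d)\le\lfloor 5(d+1)/4\rfloor$ of Lemma \ref{lem:lower-bound-Z(d)} for $d\ge 8$ and from Proposition \ref{prop:Conjecture-1-Z(7)} for $d=7$. Iterating drives the instance down to the base range $m<Z(d)$, which is exactly Proposition \ref{prop:Conjecture-1-Implies}.

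Unwinding the recursion gives $g_{\vartriangle}(d,m)=k\lfloor d/2\rfloor+g_{\vartriangle}(d,r)$ with $0\le r<Z(d)$, and the base case evaluates $g_{\vartriangle}(d,r)$ as $r-d+1$ when $d\le r<Z(d)$ and as $0$ when $r<d$: in the former a single extremal factor-critical triangle-free component of matching number $r$ is optimal ($A_d$ when $r=d$, and the graph witnessing Conjecture \ref{con:Turan-generalization} when $r>d$), while in the latter only $d$-stars fit in the residual budget. Substituting yields precisely the two branches of the stated formula, and the accompanying extremal graph is $k$ copies of $C_d$ together with either one such component (if $r\ge d$) or $r$ copies of the $d$-star (if $r<d$), in agreement with the construction in Theorem \ref{thm:ALL-IN-ONE}.

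The genuine obstacle is the ``$\le$'' direction, that is, proving greedily maximizing $x_{Z(d)}$ is optimal for \emph{all} $m$ and not merely for $m<Z(d)$: one must check that the exchange moves of Proposition \ref{prop:Conjecture-1-Implies}, devised to bound $\sum_{i<Z(d)}x_i$ within a single residual block, still apply and terminate once several copies of $C_d$ are present, and that no packing of many mid-sized components can outperform a $C_d$-heavy one. This ultimately reduces to the density comparison showing $C_d$ has the largest utility-to-volume ratio $\lfloor d/2\rfloor/Z(d)$, which again rests on $Z(d)<\lceil 3d/2\rceil$ from Lemma \ref{lem:lower-bound-Z(d)}, with $d=7$ requiring the separate input of Proposition \ref{prop:Conjecture-1-Z(7)}. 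Finally, and most importantly, the whole argument is conditional: it derives Conjecture \ref{con:general-formula-wrt-Z(d)} only modulo Conjecture \ref{con:Turan-generalization} (and the exact values of $Z(d)$ for odd $d$), so it is a reduction rather than an unconditional proof.
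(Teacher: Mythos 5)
The statement you are addressing is stated in the paper as a \emph{conjecture}: the paper gives no proof of it, only a conditional reduction. Your proposal is, in substance, that same reduction: you assume Conjecture \ref{con:Turan-generalization}, use Corollary \ref{cor:structure-not-star}, Lemma \ref{lem:bound-5d-over-4} and Lemma \ref{lem:MAIN-RECURSION} to pass to the knapsack Model 2, and then run exactly the exchange moves of Proposition \ref{prop:Conjecture-1-Implies} (with the inequality $\lfloor(3d-2)/2\rfloor\geq Z(d)$ supplied by Lemma \ref{lem:lower-bound-Z(d)} for $d\geq 8$ and by Proposition \ref{prop:Conjecture-1-Z(7)} for $d=7$) to argue that maximizing $x_{Z(d)}$ greedily is optimal, which yields the two branches of the formula and the extremal construction of Theorem \ref{thm:ALL-IN-ONE}. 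The one place where you go slightly beyond the paper's text is in making the greedy step explicit for all $m$ via the recursion $g_{\vartriangle}(d,m)=\lfloor d/2\rfloor+g_{\vartriangle}(d,m-Z(d))$ for $m\geq Z(d)$: the paper states Proposition \ref{prop:Conjecture-1-Implies} with the range $7\leq d<m<Z(d)$ and only asserts the general conclusion in the following discussion, so spelling out that the same exchanges apply and terminate when several copies of $C_d$ are present is a genuine (if modest) tightening of the paper's argument, and your note that one also needs the density comparison $\lfloor d/2\rfloor/Z(d)$ versus $(i-d+1)/i$ is consistent with it.

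To be clear about status: what you have produced is correct as a \emph{reduction} of Conjecture \ref{con:general-formula-wrt-Z(d)} to Conjecture \ref{con:Turan-generalization} (together with knowledge of $Z(d)$), and you rightly flag this yourself. It is not, and cannot be claimed as, an unconditional proof of the statement, since the values $f_{\vartriangle}(d,i)$ for $7\leq d<i<Z(d)$ (and $Z(d)$ itself for large odd $d$, cf.\ Conjecture \ref{con:open-Z(d)-values}) remain open; this is precisely the gap the paper itself leaves, so your attempt matches the paper's position rather than falling short of it.
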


\noindent Our next conjecture is about the value of $Z(d)$ which plays a crucial role in the computation of $f_{\vartriangle}(d,m)$ and the construction of extremal graphs. Recall that Lemma \ref{lem:smaller-Z(d)-values} together with Lemma \ref{lem:lower-bound-Z(d)} give $Z(d)=\lfloor 5d/4\rfloor$ if $d$ is even or $d\in\{3,5\}$; moreover there is a narrow interval for possible values of $Z(d)$ in the remaining cases (that is $d\geq 7$ odd). In \cite{Haggkvist1}, it is  stated that every triangle-free graph $G$ with $\delta(G)>3| V(G)|/8$ is a subgraph of a blow-up of the cycle of length five. For odd values of $d$, since the graphs $C_d$ realizing $Z(d)$ are triangle-free, almost regular, and factor-critical (by the definition of $Z(d)$),  we have $\delta(C_d)\geq d-2$ and $|V(C_d)|=2Z(d)+1\leq 2\lfloor 5(d+1)/4\rfloor+1$ by Lemma \ref{lem:lower-bound-Z(d)}. Since $d-2> 3(2\lfloor 5(d+1)/4\rfloor+1)/8$ for all but a few small values of $d$, this result implies that these $C_d$ graphs should be blow-up graphs of the cycle of length 5 provided that $d$ is sufficiently large. We could show for some cases that if $Z(d) < \lfloor5(d+1)/4\rfloor$ then it is not possible to construct $C_d$ which is the blow-up of a cycle of length 5 and (almost) regular with degree $d$; thus $Z(d)=\lfloor5(d+1)/4\rfloor$ for these cases. We believe that this also holds for the remaining cases if $d$ is large enough, which we formulate as a conjecture:
\begin{conjecture}\label{con:open-Z(d)-values}
For $d\geq 21$ and odd, we have $Z(d)=\lfloor5(d+1)/4\rfloor$.
\end{conjecture}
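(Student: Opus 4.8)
The plan is to prove the matching lower bound $Z(d)\ge \lfloor 5(d+1)/4\rfloor$, since the reverse inequality $Z(d)\le \lfloor 5(d+1)/4\rfloor$ already follows from Proposition \ref{prop:properties-B(d)} via the graph $B_d$. Suppose for contradiction that $Z(d)\le \lfloor 5(d+1)/4\rfloor-1$, and let $C_d$ be a triangle-free, almost $d$-regular, factor-critical graph with $\nu(C_d)=Z(d)$, so that $n:=|V(C_d)|=2Z(d)+1$ is odd and $\delta(C_d)=d-1$. First I would check that the hypothesis of the theorem of H\"aggkvist \cite{Haggkvist1} is met: using $n\le 2\lfloor 5(d+1)/4\rfloor-1\le (5d+3)/2$, a short computation gives $\delta(C_d)=d-1>3n/8$ for all $d\ge 27$, so $C_d$ is a subgraph of a blow-up of $C_5$. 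The finitely many remaining odd values $d\in\{21,23,25\}$ I would dispatch by an exhaustive search, consistent with the partial verification alluded to in the text.

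For the structural step, let $V_1,\dots,V_5$ be the five classes of the blow-up (the fibres of a homomorphism $C_d\to C_5$), with $a_i:=|V_i|$; the edges of $C_d$ run only between consecutive classes, and since a factor-critical graph is non-bipartite all five classes are non-empty. Writing $e_i$ for the number of $C_d$-edges between $V_i$ and $V_{i+1}$ (indices mod $5$), almost $d$-regularity gives, up to the single deficient vertex, the cyclic system $e_{i-1}+e_i=d\,a_i$, which on the odd cycle $C_5$ has a unique solution with each $e_i$ equal to $\tfrac{d}{2}$ times a fixed alternating sum of the $a_j$'s. The constraints I would exploit are $s_i:=a_{i-1}+a_{i+1}\ge d$ (each vertex reaches degree $d$), the capacities $0\le e_i\le a_i a_{i+1}$, and the parity constraint that $n=\sum a_i$ is odd. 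I would first observe that $s_i=d$ for all $i$ is impossible: on the ``pentagram'' $5$-cycle carrying the sums $s_i$, an odd target $d$ cannot be realised by integers, so $\sum_i s_i=2n\ge 5d+1$. Together with the oddness of $n$ this yields only $n\ge (5d+1)/2$ (resp.\ $(5d+3)/2$), which falls short of the target by exactly one unit of $Z(d)$ in both residue classes $d\equiv 1,3\pmod 4$. The extra unit must come from the capacities: I would show that every layer profile with $\sum a_i$ below $2\lfloor 5(d+1)/4\rfloor+1$ forces some $e_j$ to exceed $a_j a_{j+1}$ (for instance the near-balanced profile with sizes $(d+1)/2$ and $(d-1)/2$ alternating already gives $e_j=d(d+1)/4>(d^2-1)/4=a_j a_{j+1}$ on one block). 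Ruling out all such profiles forces the balanced sizes $a_i=(d+1)/2$, up to the parity correction when $d\equiv 3\pmod 4$, whence $n\ge 5(d+1)/2$ and $Z(d)\ge\lfloor 5(d+1)/4\rfloor$, the desired contradiction.

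The main obstacle is this last step: the combined integer-optimization argument that simultaneously balances the lower bounds $s_i\ge d$, the odd-cycle parity of $n$, and the capacity bounds $e_i\le a_i a_{i+1}$, in order to pin the minimum of $\sum a_i$ exactly at $2\lfloor 5(d+1)/4\rfloor+1$. This is delicate precisely because the naive degree-sum bound is off by one, so the entire improvement rests on the capacity inequalities, which couple neighbouring layers and must be verified across all (finitely many, by symmetry) feasible shapes of $(a_1,\dots,a_5)$; the split into $d\equiv 1$ and $d\equiv 3\pmod 4$ and the placement of the single deficient vertex add bookkeeping. A secondary difficulty is the small-$d$ boundary: the hypothesis of H\"aggkvist's theorem genuinely fails for small $d$ (indeed $Z(5)=6$ lies strictly inside the interval of Lemma \ref{lem:lower-bound-Z(d)}, witnessed by the non-blow-up graph $M_5$), which is exactly why the statement is restricted to $d\ge 21$ and why the cases $d\in\{21,23,25\}$ must be settled computationally rather than through the blow-up analysis.
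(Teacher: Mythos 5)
The statement you are trying to prove is Conjecture~\ref{con:open-Z(d)-values}: the paper offers \emph{no proof} of it. The surrounding discussion in Section~\ref{sec:discussion} sketches exactly the strategy you adopt --- use H\"aggkvist's result \cite{Haggkvist1} to conclude that any putative $C_d$ with $\nu(C_d)=Z(d)<\lfloor 5(d+1)/4\rfloor$ must be a subgraph of a blow-up of $C_5$, and then rule out such blow-ups by arithmetic on the class sizes --- and the authors state explicitly that they could carry out the second step only ``for some cases,'' which is precisely why the statement is left as a conjecture rather than a theorem.

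Measured against that, your proposal has a genuine gap, and it sits exactly where the known difficulty is. Your reduction is fine: with $\delta(C_d)=d-1$ and $n=2Z(d)+1\leq (5d+3)/2$, the threshold $d-1>3n/8$ does hold for odd $d\geq 27$, and your parity/degree-sum argument ($2n=\sum_i s_i\geq 5d+1$ since $5d$ is odd, plus $n$ odd) is correct but, as you yourself compute, stops short of the target $n\geq 2\lfloor 5(d+1)/4\rfloor+1$ by exactly $2$. All of the content of the conjecture therefore lives in the step you label ``the main obstacle'': showing that \emph{every} integer profile $(a_1,\dots,a_5)$ with $\sum a_i$ below the target forces some $e_j>a_ja_{j+1}$. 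You verify this only for a single near-balanced profile and defer the general case; that is not a proof but a restatement of the open problem, and it is the same case analysis the authors report being unable to complete in general. A second, independent gap is the base cases $d\in\{21,23,25\}$: dispatching them ``by exhaustive search'' is asserted, not done, and is not realistic --- one would have to enumerate triangle-free, almost $d$-regular, factor-critical graphs on up to roughly $63$ vertices, far beyond the scale of the paper's actual computer search (the graph $M_5$ for $d=5$, on $13$ vertices). So the proposal follows the paper's intended route faithfully, but it does not close either of the two holes that keep the statement a conjecture.
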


Last but not least, let us reformulate the computation of $f_{\vartriangle}(d,i)$ for $d \leq i \leq Z(d)$ as a generalized version of Erdős-Stone's Theorem which has been described as a fundamental theorem of extremal graph theory (see \cite{bollobas}). The extremal number $\text{ex}(n,H)$ is defined as the maximum number of edges in a graph on $n$ vertices not containing a subgraph isomorphic to $H$. Note that the classical Turan's Theorem \cite{turan} addresses the answer for $\text{ex}(n,K_r)$. In our case, we seek for the maximum number of edges in a triangle-free graph whose maximum degree is also bounded by some parameter. Indeed, by Corollary \ref{cor:structure-not-star} and Lemma \ref{lem:bound-5d-over-4}, for a graph $G\in \mathcal{G}_{\vartriangle} (d,m)$, every connected component of $G$ which is not a $d$-star is a factor-critical edge-extremal triangle-free graph with $f_{\vartriangle}(d,i)$ edges, thus with $2i+1$ vertices, where  $d\leq i\leq Z(d)$. Hence, by forbidding not a single graph $H$ but  any graph in a family $\mathcal{F}$ in the Erdős-Stone's Theorem, we can write $f_{\vartriangle}(d,i)=\text{ex}(2i+1,\{K_3,K_{1,d}\})$ for $d\leq i\leq Z(d)$. It follows that we have reduced our original problem of determining the maximum number of edges in a triangle-free graph with degree and matching number bounds into determining $\text{ex}(2i+1,\{K_3,K_{1,d}\})$ for $d\leq i\leq Z(d)$. Let us conclude by noting that Erdős-Stone's Theorem investigates the asymptotic behavior of $\text{ex}(n,\mathcal{F})$ whereas we seek for the exact value in the particular case $\text{ex}(2i+1,\{K_3,K_{1,d}\})$.

\end{document}